\newtheorem{theorem}{Theorem}[section]
\newtheorem{proposition}[theorem]{Proposition}
\newtheorem{corollary}[theorem]{Corollary}
\newtheorem{conjecture}[theorem]{Conjecture}
\theoremstyle{definition}
\newtheorem{example}[theorem]{Example}
\theoremstyle{remark}
\begin{document}

\title[Restricted and Associated Bell and Factorial Numbers]{Combinatorial and Arithmetical  Properties of the Restricted and Associated Bell and Factorial Numbers}

\author{Victor H. Moll}
\address{Department of Mathematics,
Tulane University, New Orleans, LA 70118}
\email{vhm@tulane.edu}

\author{Jos\'{e} L. Ramirez}
\address{Departamento de  Matem\'{a}ticas,
Universidad Nacional de Colombia, Bogot\'{a}, Colombia}
\email{jlramirezr@unal.edu.co}

\author{Diego Villamizar}
\address{Department of Mathematics,
Tulane University, New Orleans, LA 70118}
\email{dvillami@tulane.edu}

\subjclass{05A18; 05A19; 05A05}

\date{\today}

\keywords{Restricted Stirling numbers; Associated Stirling numbers; Set partitions; Permutations; Combinatorial identities}

\begin{abstract}
Set partitions and permutations with restrictions on the size of the blocks and cycles are important combinatorial sequences.
Counting these objects lead to the sequences  generalizing the classical Stirling and Bell numbers.  The main focus of the
present article is the analysis of combinatorial and arithmetical properties of them. The results include several combinatorial identities and recurrences as well as some properties of their $p$-adic valuations.

\end{abstract}

\maketitle

\newcommand{\N}{{\mathbb N}}
\newcommand{\realpart}{\mathop{\rm Re}\nolimits}
\newcommand{\imagpart}{\mathop{\rm Im}\nolimits}

\numberwithin{equation}{section}

\section{Introduction} \label{intro}
\setcounter{equation}{0}

The (unsigned) Stirling numbers of the first kind  denoted by $c(n,k)$ or ${n \brack k}$  enumerate the number of permutations
on  $n$ elements with $k$ cycles.  The corresponding Stirling numbers of the second kind,  denoted
by  $S(n,k)$ or ${n \brace k}$, enumerate the number of partitions of a set with $n$ elements into $k$ non-empty blocks; see
\cite{comtet-1974a} for general information about them.  The recurrences
\begin{align*}
{n+1 \brack k}&={n \brack k-1}+n{n \brack k} \quad \text{and}\\
{n+1 \brace k}&={n \brace k-1}+k{n \brace k},
\end{align*}
with the initial conditions
 \begin{align*}
 {0 \brack 0}&=1, \quad {n \brack 0}={0 \brack n}=0,\\
 {0 \brace 0}&=1, \quad {n \brace 0}={0 \brace n}=0,
 \end{align*}
hold for $n\geq 1$. They are related to each other by the orthogonality relation
$$\sum_{k\geq 0} {n \brace k}{k \brack m}(-1)^{n-k} =\delta_{n,m},$$
 where $\delta_{n,m}$ is the Kronecker delta function.



The Bell numbers, $B_n$, enumerate the set partitions of a set with $n$ elements, so that
$\begin{displaystyle} B_n=\sum_{k=0}^n{n \brace k} \end{displaystyle}$. The \textit{Spivey's formula} \cite{spivey-2008a}
\begin{align} \label{spiveye}
B_{n+m}=\sum_{k=0}^n\sum_{j=0}^mj^{n-k}  {m \brace j} \binom{n}{k}B_k,
\end{align}
gives a recurrence for them. Further properties of this sequence appear in \cite{comtet-1974a, mansour-2012a}.\\

The literature contains several generalizations of Stirling numbers; see \cite{mansour-2015a}. Among them, the so-called
restricted and associated Stirling numbers of both kinds (cf. \cite{bona-2016a, choijy-2005a,choijy-2006b,choijy-2006a,
comtet-1974a, komatsu-2015c, komatsu-2015b,  mezo-2014a}) constitute the central character of the work presented here.

The \textit{restricted Stirling numbers of the second kind}  $ {n \brace k}_{\leq m }$ give the number of partitions
 of $n$ elements  into $k$ subsets, with the additional restriction that none of the blocks contain more than
 $m$ elements. Komatsu et al. \cite{komatsu-2015c}  derived the recurrences
 \begin{align}\label{recur-1}
{n+1 \brace k}_{\leq m}  =\sum_{j=0}^{m-1} \binom{n}{j}
 {n-j \brace k-1}_{\leq m}   = k  {n \brace k}_{\leq m}   + {n \brace k-1}_{\leq m}   -     \binom{n}{m} {n-m \brace k-1}_{\leq m},
 \end{align}
     \noindent
with initial conditions
$ {0 \brace 0}_{\leq m}   = 1$ and  $   {n\brace 0}_{\leq m}  = 0$, for $n \geq 1$.  The \textit{restricted Bell numbers} defined by \cite{miksa-1958a}
 \begin{equation*}
 B_{n, \leq m} = \sum_{k=0}^{n}  {n \brace k}_{\leq m },
 \end{equation*}
 enumerate partitions of $n$ elements into blocks, each one of them with at most $m$ elements.  For
  example, $B_{4, \leq 3} = 14$, the partitions being
  \begin{align*}
 & \left\{ \{1 \}, \{ 2 \}, \{ 3 \}, \{ 4 \}  \right\}, & & \left\{ \{ 1, \, 2 \}, \{ 3 \}, \{ 4 \}  \right\}, & & \left\{ \{ 1, \, 2 \}, \{ 3, \, 4 \} \right\}, &  &  \left\{ \{ 1, \, 3 \}, \{ 2 \}, \{ 4 \} \right\}, & \\
 & \left\{ \{ 1, \, 3 \}, \{ 2, \, 4 \} \right\}, & & \left\{ \{ 1, \, 4 \}, \{ 2 \}, \{ 3 \} \right\}, & &  \left\{ \{ 1, \, 4 \}, \{ 2, \, 3 \} \right\}, &  & \left\{ \{ 1, \, 2, \, 3 \}, \{ 4 \} \right\}, &\\
  & \left\{ \{ 1, \, 2, \, 4 \}, \{  3 \} \right\}, & &  \left\{ \{ 1, \, 3, \, 4 \}, \{ 2  \} \right\}, & & \left\{\{ 1 \},  \{ 2, \, 3, \, 4 \}\right\}, & & \left\{ \{ 1 \},  \{ 2 \}, \, \{ 3, 4 \} \right\}, & \\
&  \left\{ \{ 1 \},  \{ 2, 4 \}, \{ 3 \} \right\}, & & \left\{ \{ 1 \},  \{ 2, 3 \}, \{ 4 \} \right\}. & &  &
  \end{align*}

An associated sequence is the \textit{restricted Stirling numbers of the first kind}  $ {n \brack k}_{\leq m }$. This gives the  number of
permutations on  $n$ elements with $k$ cycles with the restriction that none of the cycles  contain more than
$m$ items (see  \cite{mezo-2014a} for more information).  Komatsu et al. \cite{komatsu-2015b} established the recurrence
 \begin{align}
{n+1 \brack k}_{\leq m}  =\sum_{j=0}^{m-1} \frac{n!}{(n-j)!}
 {n-j \brack k-1}_{\leq m} = n {n \brack k}_{\leq m}   + {n \brack k-1}_{\leq m}   -     \frac{n!}{(n-m)!} {n-m \brack k-1}_{\leq m},
 \end{align}
     \noindent
     with initial conditions
    $ {0 \brack 0}_{\leq m}   = 1$ and  $ {n\brack 0}_{\leq m}  = 0$. The  \textit{restricted   factorial numbers}, see \cite{mezo-2014a}, are
    defined by
 \begin{equation*}
 A_{n, \leq m} = \sum_{k=0}^{n}  {n \brack k}_{\leq m }.
 \end{equation*}
 These enumerate all permutations  of $n$ elements into cycles with the condition that every cycle has at most $m$ items. For
  example, $A_{4, \leq 3} = 18$ with the permutations being
  \begin{align*}
 & (1)(2)(3)(4), && (1)(2)(43), & & (1)(32)(4), & & (1)(342), &  & (1)(432), & \\
& (1)(42)(3),  & & (21)(3)(4), & &  (21)(43), & & (231)(4), &  &  (241)(3), & \\
& (321)(4), & &  (31)(2)(4), & &  (341)(2), & & (31)(42), & & (421)(3), & \\
&  (431)(2), & & (41)(2)(3), && (41)(32).
  \end{align*}

The outline of the paper is this: Section 2 contains some known identities of the restricted Bell numbers $B_{n, \leq 2}$. In this
case, $m=2$, the restricted Bell and restricted factorial numbers coincide, i.e., $B_{n, \leq 2}=A_{n, \leq 2}$.  Information about their
Hankel transform is included. Section 3 contains extensions of these properties to $m=3$ and Sections 4 and 5  present the general
case. Section 6 establishes the log-convexity of the restricted Bell and factorial sequences, extending  classical results. Some conjectures on the roots of the restricted Bell polynomials are proposed here. Finally, Section 7 presents  some preliminary
results on the $p$-adic valuations of  these sequences.  Explicit expressions for the prime $p=2$ are established. A more complete
discussion of these issues is in preparation.

\section{Restricted Bell numbers $B_{n, \leq 2}$ and  restricted factorial numbers $A_{n, \leq 2}$}
\label{sec-restricted2}

This section discusses the sequence $B_{n, \leq 2}$, which enumerates partitions of $n$ elements into blocks of
length at most $2$. Then $B_{n, \leq 2}=A_{n, \leq 2}$ is precisely the number of \textit{involutions} of the $n$ elements, denoted in
\cite{amdeberhan-2015b} by  $\text{Inv}_{1}(n)$. This sequence is also called  \textit{Bessel numbers of the second
kind}, see \cite{choijy-2003a} for further information.

   The well-known  recurrence
 \begin{equation}\label{recBn2}
 B_{n, \leq 2} = B_{n-1,\leq 2} + (n-1) B_{n-2, \leq 2},
 \end{equation}
 \noindent
 with initial conditions $B_{0,\leq 2} = B_{1, \leq 2} = 1$, yields the exponential generating function
 \begin{equation}\label{expogen}
 \sum_{n=0}^{\infty} B_{n, \leq 2} \, \frac{x^{n} }{n!} = \exp{ \left( x + \tfrac{1}{2} x^{2} \right)}
 \end{equation}
 \noindent
 as well as the closed-form expression
 \begin{equation}\label{eqr1}
 B_{n, \leq 2} = \sum_{j=0}^{\lfloor n/2 \rfloor} \binom{n}{2j} \frac{(2j)!}{2^{j} \, j!}.
 \end{equation}
 The recurrence
 \begin{equation}
 B_{n_{1}+n_{2}, \leq 2}  =  \sum_{k \geq 0} k! \binom{n_{1}}{k} \binom{n_{2}}{k} B_{n_{1}-k, \leq 2} B_{n_{2}-k, \leq 2}
 \label{recu-1a}
 \end{equation}
 \noindent
 is established in \cite{amdeberhan-2015b}.

 Congruences for the involution numbers appeared in Mez\H{o} \cite{mezo-2014a},
 in a problem on the distribution of last digits of related sequences. These include
 \begin{equation}
B_{n, \leq 2} \equiv B_{n+5, \leq 2} \bmod 10 \text{ if } n > 1 \ \text{ and }
 B_{n, \leq 3} \equiv B_{n+5, \leq 2} \bmod 10 \text{ if  } n > 3.
 \end{equation}

\subsection{The Hankel Transform of $B_{n,\leq 2}$}

For a sequence $A =\left(a_n\right)_{n\in\N}$, its \textit{Hankel matrix} $H_n$ of order $n$  is defined  by
\begin{align*}
H_n=\begin{bmatrix}
a_0 & a_1 & a_2 & \cdots & a_n\\
a_1 & a_2 & a_3 & \cdots & a_{n+1}\\
\vdots & \vdots & \vdots &  & \vdots \\
a_n & a_{n+1} & a_{n+2} & \cdots & a_{2n}
\end{bmatrix}.
\end{align*}
The \textit{Hankel transform} of $A$ is the sequence $\left( \det H_{n} \right)_{n \in \mathbb{N}}$. Aigner
\cite{aigner-1999a} showed that the
Hankel transform of the Bell numbers is the sequence of the product of first $n$ factorials, so-called \textit{superfactorials}
, i.e., $(1!, 1!2!, 1!2!3!, \dots)$.  Theorem \ref{hankel-2} below shows that the Hankel transform of $B_{n,\leq 2}$ is
also given by superfactorials.

The first result gives the binomial transform of $B_{n, \leq 2}$. This involves the \textit{double factorials}
$$(2n-1)!!=\prod_{k=1}^n(2k-1)=\frac{(2n)!}{n!2^n}.$$


\begin{proposition}\label{propo}
The  binomial transform of the sequence $B_{n,\leq 2}$ is
$$\sum_{i=0}^n(-1)^i\binom{n}{i}B_{i,\leq 2}=\begin{cases}(n-1)!!,& \text{if} \ n \ \text{is even;} \\ 0,& \text{if} \ n  \text{\ is odd.}  \end{cases}$$
The numbers on the right are called the aerated double factorial.
\end{proposition}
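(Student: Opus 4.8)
The plan is to pass to the exponential generating function \eqref{expogen}. Recall the general fact that if a sequence $(a_i)_{i\ge 0}$ has EGF $f(x)=\sum_{i\ge 0}a_i x^i/i!$, then its alternating binomial transform $c_n:=\sum_{i=0}^n(-1)^i\binom{n}{i}a_i$ has EGF
\begin{equation*}
\sum_{n\ge 0}c_n\frac{x^n}{n!}=e^x f(-x),
\end{equation*}
which follows by forming the Cauchy product $\bigl(\sum_i a_i(-x)^i/i!\bigr)\bigl(\sum_m x^m/m!\bigr)$ and reindexing with $n=i+m$. Applying this with $a_i=B_{i,\le 2}$ and $f(x)=\exp\!\bigl(x+\tfrac12 x^2\bigr)$ yields
\begin{equation*}
\sum_{n\ge 0}c_n\frac{x^n}{n!}=e^x\exp\!\bigl(-x+\tfrac12 x^2\bigr)=\exp\!\bigl(\tfrac12 x^2\bigr).
\end{equation*}

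It then remains to read off coefficients. Expanding the right-hand side as $\exp(\tfrac12 x^2)=\sum_{j\ge 0}x^{2j}/(2^j j!)$ gives $c_n=0$ when $n$ is odd, while for $n=2j$ we get $c_{2j}=(2j)!/(2^j j!)$, which equals $(2j-1)!!=(n-1)!!$ by the identity for the double factorial quoted just before the proposition. This is exactly the claimed "aerated double factorial."

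The argument has no serious obstacle; the only delicate point is the sign bookkeeping—verifying that the $(-1)^i$ appearing in the transform (rather than $(-1)^{n-i}$) is responsible for producing $f(-x)$ rather than $(-1)^n f(-x)$, and that the factor $e^x$ cancels the linear term in the exponent, leaving the even function $\exp(\tfrac12 x^2)$. If one prefers a bijective proof, $c_n$ may be interpreted as $\sum_{(T,\sigma)}(-1)^{|T|}$ over pairs where $T\subseteq[n]$ and $\sigma$ is an involution of $T$; the map that toggles membership in $T$ of the smallest element of $[n]$ that is either absent from $T$ or a fixed point of $\sigma$ is a sign-reversing involution whose fixed points are the fixed-point-free involutions of $[n]$, of which there are $(n-1)!!$ when $n$ is even and none when $n$ is odd. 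I would present the generating-function computation as the main proof and, space permitting, mention this combinatorial reading as a remark.
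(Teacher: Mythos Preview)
Your proof is correct and follows essentially the same route as the paper: multiply the exponential generating function \eqref{expogen} by the appropriate exponential to obtain $\exp(\tfrac12 x^2)$, then read off the aerated double factorials. The paper likewise supplements this with a combinatorial argument, though it phrases it as inclusion--exclusion on the set of singleton blocks rather than as your sign-reversing involution; the two are equivalent reformulations of the same count of fixed-point-free involutions.
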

\begin{proof}
The exponential generating function $A(x)$  of a sequence $(a_n)_{n\geq 0}$ and that
of its binomial transform $S(x)$ are related by $S(x)=e^{-x}A(x)$. The result now follows from (\ref{expogen}).
\end{proof}

\noindent
\emph{Combinatorial Proof of Proposition \ref{propo}:} Let $\mathcal{B}_{n,\leq 2}$ be the set of all partitions into blocks of
length at most 2. Let $\mathcal{S}_{n,i}=\{\pi \in \mathcal{B}_{n,\leq 2}:\{i\}\in \pi\}$ be the set of partitions of $[n]$ in blocks of length less or equal to $2$, where $i$ is a singleton block. There are $B_{n-1,\leq 2}$ of them.  Then $$\mathcal{B}_{n,\leq 2}=\bigcup _{i=1}^{n}\mathcal{S}_{n,i} \,\, \bigcup \underbrace{(\mathcal{B}_{n,\leq 2}\setminus (\bigcup _{i=1}^{n}\mathcal{S}_{n,i}))}_{\text{Denote this
 by $L_{n}$}}.$$
 \noindent
The inclusion-exclusion principle gives $$B_{n,\leq 2}=\sum _{i=1}^n(-1)^{i-1}\binom{n}{i}B_{n-i,\leq 2}+|L_n|,$$ that yields
$$|L_n|=B_{n,\leq 2}-\sum _{i=1}^n(-1)^{i-1}\binom{n}{i}B_{n-i,\leq 2}=\sum _{i=0}^n(-1)^{i}\binom{n}{i}B_{n-i,\leq 2}=\sum _{i=0}^n(-1)^{n-i}\binom{n}{i}B_{i,\leq 2}.$$
On the other hand, $L_n=\{\pi \in \mathcal{B}_{n,\leq 2}: \text{such that if $B\in \pi$ then $|B|=2$}\}$, because it is the complement
of the partitions with at least one singleton. Thus
 $$|L_n|=\frac{\binom{n}{2,2,\dots ,2}}{ \left(\frac{n}{2} \right)!}=\frac{n!}{2^{n/2} \left(\frac{n}{2} \right)!}$$ if $n$ is
 even and $0$ if $n$ is odd. This establishes the identity. \\

Barry and Hennessy \cite[Example 16]{barry-2010a} show that Hankel transform of the aerated double
factorial is the superfactorials. The
fact that any integer sequence has the same Hankel transform as its binomial transform \cite{layman-2001a, spivey-2006a}, gives the
next  result.

\begin{theorem}
\label{hankel-2}
The Hankel transform of the restricted Bell numbers $B_{n,\leq 2}$ is the superfactorials; that is, for  any fixed $n$,
$$\det\begin{bmatrix}
B_{0,\leq 2} & B_{1,\leq 2} & B_{2,\leq 2} & \cdots & B_{n,\leq 2}\\
B_{1,\leq 2} & B_{2,\leq 2} & B_{3,\leq 2} & \cdots & B_{n+1,\leq 2}\\
\vdots & \vdots & \vdots &  & \vdots \\
B_{n,\leq 2} & B_{n+1,\leq 2} & B_{n+2,\leq 2} & \cdots & B_{2n,\leq 2}
\end{bmatrix}=\prod_{i=0}^{n}i!.$$
\end{theorem}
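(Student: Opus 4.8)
The plan is to assemble the statement from the three ingredients collected immediately above it, so that the argument is essentially a one-line concatenation. First I would recall Proposition~\ref{propo}: the binomial transform of $(B_{n,\leq 2})_{n\ge 0}$ is the aerated double factorial sequence, whose even-indexed terms are the double factorials $(2j-1)!!$ and whose odd-indexed terms vanish. Second, I would invoke the evaluation of Barry and Hennessy~\cite{barry-2010a}, according to which the Hankel transform of this aerated double factorial is precisely the superfactorial sequence $\bigl(\prod_{i=0}^{n} i!\bigr)_{n\ge 0}$. Third, I would use the classical fact~\cite{layman-2001a, spivey-2006a} that a sequence and its binomial transform have the same Hankel transform. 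Chaining these three statements yields the determinant evaluation asserted in the theorem.

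If a self-contained proof were wanted instead, I would argue through orthogonal polynomials. The exponential generating function $\exp\bigl(x+\tfrac12 x^{2}\bigr)$ in~\eqref{expogen} is the moment generating function of a Gaussian random variable of mean $1$ and variance $1$, so $B_{n,\leq 2}$ is its $n$-th moment; the three-term recurrence~\eqref{recBn2} is exactly the moment shadow of the fact that the associated monic orthogonal polynomials are shifted (probabilists') Hermite polynomials, with recurrence coefficients $b_k=1$ and $\lambda_k=k$. The classical Hankel determinant formula for moment sequences (Heilermann's formula) then gives
\begin{equation*}
\det H_n \;=\; B_{0,\leq 2}^{\,n+1}\prod_{k=1}^{n} \lambda_k^{\,n+1-k} \;=\; \prod_{k=1}^{n} k^{\,n+1-k} \;=\; \prod_{i=0}^{n} i!,
\end{equation*}
the last equality holding because the factor $k$ occurs exactly $n+1-k$ times among $0!,1!,\dots,n!$.

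The only genuine obstacle, in either route, is an ingredient that is already at hand: the Barry--Hennessy evaluation of the Hankel transform of the aerated double factorial in the first route, or the identification with Hermite polynomials together with Heilermann's formula in the second. Everything else is bookkeeping --- extracting the aerated double factorial from Proposition~\ref{propo}, or checking the exponent identity $\prod_{i=0}^{n} i! = \prod_{k=1}^{n} k^{\,n+1-k}$. Since the authors have set up the first route and the needed lemma is cited from the literature, I expect the proof to collapse to the three-fold chain described above.
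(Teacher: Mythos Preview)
Your first route is exactly the paper's argument: Proposition~\ref{propo} identifies the binomial transform of $(B_{n,\leq 2})$ with the aerated double factorials, Barry--Hennessy~\cite{barry-2010a} gives their Hankel transform as the superfactorials, and invariance of the Hankel transform under the binomial transform~\cite{layman-2001a,spivey-2006a} finishes it. Your second, self-contained route via the Gaussian moment interpretation and Heilermann's formula is a correct and genuinely independent alternative that the paper does not pursue.
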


\bigskip
\section{The Restricted Bell numbers $B_{n, \leq 3}$ and the restricted factorial numbers $A_{n, \leq 3}$}
 \label{sec-recurrence}

 The goal of the current section is to extend some results in the previous section to the case $m=3$. Recurrences
 established here are employed in Section \ref{sec-arith} to discuss arithmetic properties of $B_{n, \leq 3}$ and $A_{n, \leq 3}$.

 \smallskip

 The first statement relates $B_{n, \leq 3}$ to the involution numbers $B_{n, \leq 2}$.

 \begin{theorem}
 \label{teorecr1}
 The restricted Bell numbers $B_{n, \leq 3}$ are given by
 \begin{equation}
B_{n, \leq 3}  =  \sum_{j=0}^{\lfloor{ n/3 \rfloor}} \binom{n}{3j} \frac{(3j)!}{(3!)^{j} \, j!} B_{n-3j, \leq 2}.
\label{rec-1}
 \end{equation}
 \end{theorem}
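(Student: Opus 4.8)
The plan is to prove the identity via exponential generating functions, in the spirit of (\ref{expogen}). Since $B_{n,\leq 3}$ counts set partitions of $[n]$ into blocks of size $1$, $2$, or $3$, the exponential formula gives
$$\sum_{n\geq 0} B_{n,\leq 3}\,\frac{x^n}{n!} = \exp\!\left(x + \tfrac{1}{2}x^2 + \tfrac{1}{6}x^3\right),$$
the exponential of $x + x^2/2! + x^3/3!$. Similarly, from (\ref{expogen}), the sequence $B_{n,\leq 2}$ has exponential generating function $\exp\!\left(x + \tfrac12 x^2\right)$.

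Next I would interpret the right-hand side of (\ref{rec-1}). The coefficient $\tfrac{(3j)!}{(3!)^j\,j!}$ is the number of ways to partition a set of $3j$ elements into $j$ unordered blocks of size exactly $3$; if we set $a_n = \tfrac{(3j)!}{(3!)^j j!}$ when $n = 3j$ and $a_n = 0$ otherwise, this sequence has exponential generating function $\sum_{j\geq 0} \tfrac{x^{3j}}{(3!)^j j!} = \exp\!\left(\tfrac16 x^3\right)$. The sum $\sum_{j} \binom{n}{3j}\tfrac{(3j)!}{(3!)^j j!}B_{n-3j,\leq 2} = \sum_k \binom{n}{k} a_k\, B_{n-k,\leq 2}$ is then a binomial convolution, so its exponential generating function is the product
$$\exp\!\left(\tfrac16 x^3\right)\cdot \exp\!\left(x + \tfrac12 x^2\right) = \exp\!\left(x + \tfrac12 x^2 + \tfrac16 x^3\right),$$
which is exactly the generating function of $B_{n,\leq 3}$. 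Comparing coefficients of $x^n/n!$ yields (\ref{rec-1}).

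An equivalent bijective argument proceeds directly, in the style of the combinatorial proof of Proposition \ref{propo}: given a partition of $[n]$ into blocks of size at most $3$, let $3j$ be the number of elements lying in blocks of size exactly $3$. Choosing those elements ($\binom{n}{3j}$ ways), partitioning them into $j$ triples ($\tfrac{(3j)!}{(3!)^j j!}$ ways), and partitioning the remaining $n-3j$ elements into blocks of size at most $2$ ($B_{n-3j,\leq 2}$ ways), then summing over $j$, reproduces (\ref{rec-1}).

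There is no serious obstacle here; the only point demanding a little care is the bookkeeping that ensures the blocks of size exactly $3$ are counted without overcounting — that is, the factor $j!$ in the denominator of $\tfrac{(3j)!}{(3!)^j j!}$, equivalently the absence of an extra $j!$ in the expansion $\exp\!\left(\tfrac16 x^3\right) = \sum_{j\geq 0} \tfrac{x^{3j}}{6^j j!}$. It is worth noting that the same reasoning generalizes verbatim: multiplying the generating function of $B_{n,\leq m-1}$, namely $\exp$ of $x + \cdots + x^{m-1}/(m-1)!$, by $\exp(x^m/m!)$ yields the analogous reduction of $B_{n,\leq m}$ to $B_{n,\leq m-1}$, which serves as the template for the general case treated in the later sections.
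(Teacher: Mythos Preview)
Your proposal is correct. The bijective argument you sketch in the second half is precisely the paper's own proof: classify partitions of $[n]$ by the number $j$ of blocks of size exactly $3$, choose the $3j$ elements, partition them into triples, and then partition the rest with blocks of size at most $2$.

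Your primary argument via exponential generating functions is a genuinely different route. The paper does not use generating functions to prove this theorem; in fact, in the paper's logical order the generating function~(\ref{genfun3}) is obtained only \emph{after} this theorem, as a consequence of the recurrence in Theorem~\ref{req1} (whose analytic proof in turn uses Theorem~\ref{teorecr1}). You sidestep any circularity by deriving the EGF of $B_{n,\leq 3}$ directly from the exponential formula, independently of~(\ref{genfun3}). What your approach buys is transparency and immediate generality: as you note, multiplying $\exp\bigl(\sum_{i=1}^{m-1} x^i/i!\bigr)$ by $\exp(x^m/m!)$ instantly yields the reduction of $B_{n,\leq m}$ to $B_{n,\leq m-1}$, which the paper proves combinatorially as Theorem~\ref{formular1}. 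The paper's direct counting argument, on the other hand, is self-contained and avoids any appeal to the exponential formula.
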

\begin{proof}
Count the set of all partitions of $[n]$ into block of size at most 3, with exactly $j$ blocks  of size 3. To do so, first choose a
 subset of $[n]$ of size $3j$ to place the $j$ blocks of size 3. This is done in $\binom{n}{3j}$ ways. Then, the number of set partitions
 of $[3j]$  such that each block has three elements is  $\frac{(3j)!}{(3!)^{j} \, j!}$.  The remaining $n-3j$ elements produce
  $B_{n-2j,\leq 2}$ partitions.   Summing over $j$ completes the argument.
\end{proof}

 The next result gives a recurrence for $B_{n, \leq 3}$.

 \begin{theorem}\label{req1}
 The restricted Bell  numbers $B_{n, \leq 3 }$ satisfy the recurrence
 \begin{equation}
 B_{n, \leq 3} =  B_{n-1, \leq 3} + \binom{n-1}{1} B_{n-2, \leq 3} + \binom{n-1}{2} B_{n-3, \leq 3},
 \label{rec-stir1}
 \end{equation}
 with initial conditions  $B_{0, \leq 3} = 1, \, B_{1, \leq 3} = 1, \, B_{2, \leq 3} = 2.$
 \end{theorem}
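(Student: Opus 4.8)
The plan is to argue combinatorially by conditioning on the block that contains a distinguished element, say $n$, in a partition of $[n]=\{1,\dots,n\}$ into blocks of size at most $3$. Write $\mathcal{B}_{n,\leq 3}$ for the set of such partitions, so $|\mathcal{B}_{n,\leq 3}|=B_{n,\leq 3}$. Since every block has size $1$, $2$ or $3$, the (uniquely determined) block $B$ containing $n$ falls into exactly one of three cases, and these cases partition $\mathcal{B}_{n,\leq 3}$.

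First case: $B=\{n\}$ is a singleton. Deleting $n$ leaves an arbitrary partition of $[n-1]$ into blocks of size $\leq 3$, giving $B_{n-1,\leq 3}$ partitions. Second case: $|B|=2$, say $B=\{a,n\}$; the partner $a$ is any of the $n-1$ remaining elements, that is $\binom{n-1}{1}$ choices, and after deleting $B$ the remaining $n-2$ elements form an arbitrary element of $\mathcal{B}_{n-2,\leq 3}$, contributing $\binom{n-1}{1}B_{n-2,\leq 3}$. Third case: $|B|=3$, say $B=\{a,b,n\}$; the pair $\{a,b\}$ is chosen among the $n-1$ remaining elements in $\binom{n-1}{2}$ ways, and the other $n-3$ elements give $B_{n-3,\leq 3}$ partitions, for a total of $\binom{n-1}{2}B_{n-3,\leq 3}$. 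Adding the three disjoint, exhaustive contributions yields \eqref{rec-stir1}. The initial values are read off directly: $B_{0,\leq 3}=1$ (the empty partition), $B_{1,\leq 3}=1$, and $B_{2,\leq 3}=2$ from $\{\{1\},\{2\}\}$ and $\{\{1,2\}\}$.

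There is no serious obstacle here; the only point deserving a line of care is to confirm that the three cases are genuinely disjoint and cover all of $\mathcal{B}_{n,\leq 3}$, which is immediate once one observes that the block containing $n$ is uniquely determined by the partition. As an alternative, purely formal derivation one may specialize the recurrence \eqref{recur-1} to $m=3$, namely ${n+1\brace k}_{\leq 3}=\sum_{j=0}^{2}\binom{n}{j}{n-j\brace k-1}_{\leq 3}$, sum over $k\geq 0$ using $\sum_{k\geq 0}{n\brace k}_{\leq 3}=B_{n,\leq 3}$, and reindex $n\mapsto n-1$; this recovers \eqref{rec-stir1} as well.
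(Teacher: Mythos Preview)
Your proof is correct and essentially coincides with the combinatorial proof the paper supplies immediately after its formal proof: both condition on the block containing a distinguished element (you take $n$, the paper takes the minimal element) and count according to whether that block has size $1$, $2$, or $3$. Your alternative derivation via summing \eqref{recur-1} over $k$ is also valid and not given in the paper.

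For completeness, the paper's \emph{primary} proof takes a different, computational route: it combines \eqref{eqr1} with Theorem~\ref{teorecr1} to express $B_{n,\leq 3}$ as an explicit double sum over $i$ and $j$, and then verifies the three-term recurrence mechanically by the WZ-method. Your bijective argument is more transparent and self-contained, while the paper's WZ approach has the advantage of being automatable and of illustrating how the closed form from Theorem~\ref{teorecr1} can be exploited directly.
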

 \begin{proof}
 The expression for $B_{n, \leq 2}$ in \eqref{eqr1} and \eqref{rec-1} produce
 \begin{equation*}
 B_{n, \leq 3} = \sum_{i=0}^{\lfloor \tfrac{n}{3} \rfloor } \sum_{j=0}^{\lfloor \tfrac{n}{2} \rfloor }
 \binom{n}{3i} \frac{(3i)!}{6^{i} i!} \binom{n-3i}{2j} \binom{2j}{j} \frac{j!}{2^{j}},
 \end{equation*}
 \noindent
 that may be written as
 \begin{equation*}
 B_{n, \leq 3} = \sum_{i=0}^{\lfloor \tfrac{n}{3} \rfloor } \sum_{j=0}^{\lfloor \tfrac{n}{2} \rfloor}
 \binom{n}{3i+2j} \binom{3i+2j}{2j} \binom{2j}{j} \frac{(3i)! \, j!}{6^{i} \, i! \, 2^{j}}.
 \end{equation*}
 \noindent
 The recurrence is obtained as a routine application of the WZ-method \cite{nemesi-1997a, petkovsek-1996a}.
 \end{proof}

 \noindent
\emph{Combinatorial proof of Theorem \ref{req1}}: Suppose the first block is the size $i$ with $i=1, 2$ or 3. Since this block
contains the minimal element, one only needs to  choose  $l$ elements, with $l=0, 1$ or 2. Therefore, the number of set
partitions of $[n]$ with exactly $i$ elements in the first block is given by $\binom{n-1}{i}B_{n-i,\leq3}$ for $i=1, 2, 3$. Summing over $i$
completes the argument.

\medskip

The above recurrence produces the exponential generating function
 \begin{equation}\label{genfun3}
  \sum_{n=0}^{\infty} B_{n, \leq 3} \, \frac{x^{n} }{n!} = \exp{ \left( x + \tfrac{1}{2} x^{2} + \tfrac{1}{3!} x^{3} \right)}.
 \end{equation}

 The next result is an extension of \eqref{recu-1a} for the case of partitions with blocks of length at most $3$. It is an analog of the
 Spivey-like formula \eqref{spiveye}.

 \begin{theorem}
 \label{teo1a}
 Define $a(i,j) = \frac{1}{3}(2i-j-k)$. Then the restricted Bell numbers $B_{n, \leq 3}$ satisfy the relation
 \begin{eqnarray*}
 B_{n+m, \leq 3} & = & \sum_{i=0}^{n} \binom{n}{i} B_{n-i, \leq 3}
 \sum_{j= \left\lceil  \tfrac{i}{2} \right\rceil}^{\min \{m, 2i \} } \binom{m}{j} B_{m-j, \leq 3}  \\
 & & \times \sum_{\substack{k=0 \\ k\equiv -i-j \bmod{3}}}^{\min \{ i, j, 2i-j,2j-i \}} \binom{i}{k} \binom{j}{k} k!
 \binom{i-k}{a(j,i)} \binom{j-k}{a(i,j)} \frac{(2a(i,j))! \, (2a(j,i))!}{2^{a(i,j) + a(j,i)}} \nonumber\\
 & = &
 \sum _{i=0}^n \sum _{j=\lceil \frac{i}{2}\rceil}^{\min \{m,2i\}} \sum_{\substack{k=0 \\ k\equiv -i-j \bmod{3}}}^{\min \{i,j,2i-j,2j-i\}}\frac{n!m!B_{n-i,\leq 3}B_{m-j, \leq 3}}{k!(n-i)!(m-j)!a(i,j)!a(j,i)!2^{\frac{i+j-2k}{3}}}.
 \end{eqnarray*}
 \end{theorem}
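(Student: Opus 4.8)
The plan is to prove the first form by a combinatorial double-counting argument in the spirit of Spivey's proof of \eqref{spiveye}, and then to pass to the second form by a routine algebraic simplification; the second form also drops out directly from the generating function \eqref{genfun3}, which I will record as a cross-check.

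First I would fix a splitting $[n+m]=X\sqcup Y$ with $|X|=n$, $|Y|=m$. Given a partition $\pi$ of $[n+m]$ into blocks of size at most $3$, I classify its blocks into those contained in $X$, those contained in $Y$, and the remaining \emph{mixed} blocks (meeting both $X$ and $Y$). If the blocks inside $X$ cover $X'\subseteq X$ and those inside $Y$ cover $Y'\subseteq Y$, put $I=X\setminus X'$, $J=Y\setminus Y'$, $i=|I|$, $j=|J|$; the mixed blocks then form a partition of $I\cup J$ in which every block has size $2$ or $3$ and meets both $I$ and $J$. This decomposition is a bijection between $\mathcal{B}_{n+m,\leq 3}$ and the data (choice of $I$, partition of $X'$ into $\leq 3$-blocks, choice of $J$, partition of $Y'$ into $\leq 3$-blocks, admissible mixed partition of $I\cup J$), so $B_{n+m,\leq 3}=\sum_{i,j}\binom{n}{i}B_{n-i,\leq 3}\binom{m}{j}B_{m-j,\leq 3}\,M(i,j)$, where $M(i,j)$ is the number of admissible mixed partitions of $I\cup J$. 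Both stated forms follow once $M(i,j)$ is evaluated.

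To compute $M(i,j)$, I classify the mixed blocks by type: type $(1,1)$ has one element of $I$ and one of $J$, type $(2,1)$ has two of $I$ and one of $J$, type $(1,2)$ has one of $I$ and two of $J$. If there are $k$, $p$, $q$ blocks of each type, counting elements gives $k+2p+q=i$ and $k+p+2q=j$, hence $p=a(i,j)=\tfrac13(2i-j-k)$ and $q=a(j,i)=\tfrac13(2j-i-k)$; nonnegativity and integrality of $p,q$ force $k\equiv -i-j\pmod 3$, $0\le k\le\min\{i,j,2i-j,2j-i\}$, and $\lceil i/2\rceil\le j\le\min\{m,2i\}$ — exactly the summation ranges in the statement (and the inner sum is empty, hence $0$, if no admissible $k$ exists). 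For a fixed admissible triple $(i,j,k)$ I build the mixed partition in stages: choose the $(1,1)$-blocks by selecting $k$ elements of $I$, $k$ of $J$, and a bijection between them, in $\binom{i}{k}\binom{j}{k}k!$ ways; among the $i-k$ remaining elements of $I$ pick the $q$ lone-$I$ elements of the $(1,2)$-blocks and split the other $2p$ into the $I$-halves of the $(2,1)$-blocks, in $\binom{i-k}{q}(2p)!/(2^{p}p!)$ ways; do the symmetric thing in $J$, in $\binom{j-k}{p}(2q)!/(2^{q}q!)$ ways; and finally match the $p$ $I$-pairs with the $p$ lone-$J$ elements and the $q$ $J$-pairs with the $q$ lone-$I$ elements, in $p!\,q!$ ways. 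Multiplying these and using $2p=2a(i,j)$, $2q=2a(j,i)$ reproduces the summand $\binom{i}{k}\binom{j}{k}k!\binom{i-k}{a(j,i)}\binom{j-k}{a(i,j)}\frac{(2a(i,j))!\,(2a(j,i))!}{2^{a(i,j)+a(j,i)}}$, and summing over $k$, $j$, $i$ gives the first form.

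For the second form I would substitute $\binom{i-k}{q}=\binom{2p+q}{q}$ and $\binom{j-k}{p}=\binom{p+2q}{p}$, cancel the $(2p)!$ and $(2q)!$, and use $\binom{n}{i}i!=n!/(n-i)!$, $\binom{m}{j}j!=m!/(m-j)!$, $p+q=\tfrac13(i+j-2k)$; this rewrites the summand as $\dfrac{n!\,m!\,B_{n-i,\leq 3}\,B_{m-j,\leq 3}}{k!\,(n-i)!\,(m-j)!\,a(i,j)!\,a(j,i)!\,2^{(i+j-2k)/3}}$, as claimed. As a check, $\sum_{N\ge 0}B_{N,\leq 3}\tfrac{(x+y)^N}{N!}=\exp\!\big((x+y)+\tfrac12(x+y)^2+\tfrac16(x+y)^3\big)$ factors as $\exp(x+\tfrac12 x^2+\tfrac16 x^3)\exp(y+\tfrac12 y^2+\tfrac16 y^3)\exp(xy+\tfrac12 x^2y+\tfrac12 xy^2)$, and expanding the last factor as $\sum_{k,p,q\ge 0}\frac{1}{k!\,p!\,q!\,2^{p+q}}x^{k+2p+q}y^{k+p+2q}$ recovers the coefficient of $\tfrac{x^i}{i!}\tfrac{y^j}{j!}$ in $M(i,j)$ at once, reproving the second form. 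The only place real care is needed is the staged count of $M(i,j)$: keeping the unordered-pair factors $1/2^{p}$, $1/2^{q}$ and the matching factors $p!\,q!$ straight, and pinning down the exact bounds on $k$ and $j$; the rest is bookkeeping.
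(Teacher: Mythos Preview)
Your proof is correct and follows essentially the same approach as the paper's: split $[n+m]$ into two parts, decompose each partition into pure and mixed blocks, classify the mixed blocks by their $(I,J)$-profile, solve the resulting linear system for the type counts in terms of $i,j,k$, and count configurations. Your write-up is in fact more complete than the paper's --- you carry out the algebraic simplification to the second form explicitly and add the generating-function cross-check, both of which the paper omits.
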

 \begin{proof}
 The set of $n+m$ elements whose partitions are enumerated by $B_{n+m, \leq 3}$ is split into two disjoint sets
 $I_{1}$ and $I_{2}$ of cardinality $n$ and $m$, respectively. Any such partition $ \pi$ can be written uniquely
 in the form $\pi = \pi_{1} \cup \pi_{2} \cup \pi_{3}$, where $\pi_{1}$ is a partition of a subset of $I_{1},  \, \pi_{2}$ is a
 partition of  a subset of $I_{2}$ and the blocks in $\pi_{3}$ contain elements of both $I_{1}$ and $I_{2}$.  Denote
 by $a_{2}$ the number of blocks in $\pi_{1}$ and $a_{5}$ those in $\pi_{2}$.

 \smallskip

 The blocks in $\pi_{3}$ come in three different forms:

 \smallskip

 \noindent
\textit{Type 1}. The block is of the form $x = \{ \alpha_{1}, \, \beta_{1} \}$ with $\alpha_{1} \in I_{1}$ and
 $\beta_{1} \in I_{2}$.  Let $a_{1}$ be the number of them.  The $n+m$ elements can be placed into these type of blocks in
 \begin{equation*}
 \binom{n}{a_{1}} \binom{m}{a_{1}} a_{1}! \quad \text{ ways}.
 \end{equation*}

 \smallskip

 \noindent
\textit{Type 2}. The form is now $x = \{ \alpha_{1}, \beta_{1}, \beta_{2} \}$ with $\alpha_{1} \in I_{1}$ and
 $\beta_{j} \in I_{2}$, for $j = 1, 2$.  Let $a_{3}$ denote the number of these type of blocks.  These contributed
 \begin{equation*}
 \binom{n}{2a_{3}} \binom{m}{a_{3}}  \frac{(2a_{3})!}{2^{a_{3}}} \quad \text{ to the placement of the } n+m \text{ elements}.
 \end{equation*}

  \smallskip

 \noindent
\textit{Type 3}. The final form is $x = \{ \alpha_{1}, \alpha_{2}, \beta_{1} \}$ with $\alpha_{j} \in I_{1}, \, j=1, \, 2$  and
 $\beta_{1} \in I_{2}$.  Denote by  $a_{4}$ the number of such blocks.  These contribute
 \begin{equation*}
 \binom{n}{a_{4}} \binom{m}{2a_{4}}  \frac{(2a_{4})!}{2^{a_{4}}} \quad \text{ to the count}.
 \end{equation*}

\medskip

Therefore the total number of partitions is given by
\begin{eqnarray*}
\label{mess-1} \\
B_{n+m, \leq 3} =
\sum  \binom{n}{a_{1}, a_2, a_3, 2a_4} \binom{m}{a_{1}, a_5, a_4, 2a_3} a_{1}! \frac{(2a_{3})!}{2^{a_{3}}} \frac{(2a_{4})!}{2^{a_{4}}}
B_{a_{2}, \leq n} B_{a_{5}, \leq m}, \nonumber
\end{eqnarray*}
\noindent
where the sum extends over all indices $0 \leq n_{1}, n_{2}, n_{3}, n_{4}, n_{5}$ such that
\begin{equation*}
a_{1} + a_{2} + a_{3} + 2a_{4} = n  \text{ and } a_{1} + a_{5} + 2a_{3} + a_{4} = m.
\end{equation*}

Introduce the notation $i = n-a_{2}, \, j = m -a_{5}$ and $k = a_{1}$ (so that  $i, \, j, \, k \geq 0$) and
solve for $a_{3}$ and $a_{4}$ from
\begin{eqnarray*}
a_{3} + 2a_{4} &=i-k, \\
2a_{3} + a_{4} &=j-k
\end{eqnarray*}
\noindent
to obtain
\begin{equation}
a_{3} = \frac{2j-i-k}{3} \quad \text{ and }  \quad a_{4} = \frac{2i-j-k}{3}.
\end{equation}
\noindent
The fact that $a_{3}, \, a_{4} \in \mathbb{N}$ is equivalent to $i+j+k \equiv 0 \bmod 3$.  This gives the result.
 \end{proof}

The following theorem is the analog of Theorems \ref{teorecr1}, \ref{req1} and (\ref{genfun3}). The proof is similar, so it is
omitted.

 \begin{theorem}\label{teoas3}
 The restricted factorial sequence  $A_{n, \leq 3}$ is given by
 \begin{equation*}
A_{n, \leq 3}  =  \sum_{j=0}^{\lfloor{ n/3 \rfloor}} \binom{n}{3j} \frac{(3j)!}{3^{j} \, j!} A_{n-3j, \leq 2}.
 \end{equation*}
Moreover, it satisfies the recurrence
 \begin{equation*}
 A_{n, \leq 3} =  A_{n-1, \leq 3} + (n-1) A_{n-2, \leq 3} + (n-1)(n-2)A_{n-3, \leq 3},
 \end{equation*}
 with initial conditions $A_{0, \leq 3} = 1, \, A_{1, \leq 3} = 1, \, A_{2, \leq 3} = 2.$ Its generating function is
  \begin{equation*}
 \sum_{n=0}^{\infty} A_{n, \leq 3} \, \frac{x^{n} }{n!} = \exp{ \left( x + \tfrac{1}{2} x^{2} + \tfrac{1}{3} x^{3} \right)}.
 \end{equation*}
 \end{theorem}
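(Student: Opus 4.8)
The plan is to follow, step by step, the three‑part template already used for Theorems \ref{teorecr1} and \ref{req1} and for the generating function \eqref{genfun3}, translating each argument from set partitions to permutations.

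First, for the closed form I would argue combinatorially as in the proof of Theorem \ref{teorecr1}: count the permutations of $[n]$ all of whose cycles have length at most $3$ and which have exactly $j$ cycles of length $3$. Choose the $3j$ elements lying on the $3$-cycles in $\binom{n}{3j}$ ways. A fixed set of $3j$ elements can be arranged into $j$ (unordered) $3$-cycles in $\frac{(3j)!}{(3!)^{j}j!}\cdot(3-1)!^{\,j}=\frac{(3j)!\,2^{j}}{6^{j}j!}=\frac{(3j)!}{3^{j}j!}$ ways, since each unordered block of three elements admits $(3-1)!=2$ distinct cyclic orders. The remaining $n-3j$ elements contribute $A_{n-3j,\leq 2}$ permutations with cycles of length at most $2$. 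Summing over $j$ gives the stated identity; the only change from \eqref{rec-1} is the replacement of $(3!)^{j}j!$ by $3^{j}j!$, which is precisely this $2^{j}$ correction.

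Second, for the recurrence I would mimic the combinatorial proof of Theorem \ref{req1} by looking at the cycle containing the minimal element $1$, whose size $i$ is $1$, $2$, or $3$. If $i=1$, the remaining $n-1$ elements give $A_{n-1,\leq 3}$ permutations; if $i=2$, the partner of $1$ is chosen in $n-1$ ways and the rest contributes $A_{n-2,\leq 3}$; if $i=3$, one completes the $3$-cycle $(1\,a\,b)$ by choosing the \emph{ordered} pair $(a,b)$ in $(n-1)(n-2)$ ways -- here the order matters because $(1\,a\,b)\neq(1\,b\,a)$ -- and the rest contributes $A_{n-3,\leq 3}$. Adding the three cases yields the displayed three‑term recurrence. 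Alternatively, as was done for Theorem \ref{req1}, one substitutes the closed form $A_{n,\leq 2}=\sum_{j}\binom{n}{2j}\frac{(2j)!}{2^{j}j!}$ (which equals $B_{n,\leq 2}$, see \eqref{eqr1}) into the first identity and certifies the recurrence by the WZ-method; the initial values $A_{0,\leq 3}=A_{1,\leq 3}=1$ and $A_{2,\leq 3}=2$ are checked directly.

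Finally, for the exponential generating function, set $f(x)=\sum_{n\geq 0}A_{n,\leq 3}\,x^{n}/n!$. Multiplying the recurrence by $x^{n-1}/(n-1)!$ and summing over $n\geq 1$ turns the three terms into $f(x)$, $xf(x)$, and $x^{2}f(x)$ respectively, so $f'(x)=(1+x+x^{2})f(x)$ with $f(0)=A_{0,\leq 3}=1$, whence $f(x)=\exp(x+\tfrac{1}{2}x^{2}+\tfrac{1}{3}x^{3})$. Equivalently, the exponential formula applies at once: a single cycle on $k$ elements has $(k-1)!$ cyclic orders and thus contributes $(k-1)!\,x^{k}/k!=x^{k}/k$ to the generating function of the connected pieces, and restricting to $k\leq 3$ gives $\exp(x+\tfrac{1}{2}x^{2}+\tfrac{1}{3}x^{3})$ for the whole class. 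The only point that requires attention throughout is the cycle weight $(k-1)!$ -- equivalently, the factor $2^{j}$ distinguishing $\frac{(3j)!}{3^{j}j!}$ here from $\frac{(3j)!}{(3!)^{j}j!}$ in \eqref{rec-1}, and the appearance of $(n-1)(n-2)$ rather than $\binom{n-1}{2}$ in the recurrence; otherwise the arguments are verbatim transcriptions of the set‑partition case, which is why the paper omits them.
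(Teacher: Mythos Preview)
Your proposal is correct and follows exactly the approach the paper intends: the paper omits the proof, saying only that it is ``similar'' to the proofs of Theorems \ref{teorecr1} and \ref{req1} and of \eqref{genfun3}, and your three parts are precisely the expected transcriptions of those arguments to the permutation setting, with the correct adjustments (the factor $2^{j}$ from the $(3-1)!$ cyclic orders, and the ordered pair $(n-1)(n-2)$ in place of $\binom{n-1}{2}$). There is nothing to add.
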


\section{The General Case $B_{n,\leq m}$.}
\label{sec4}

In this section, some recurrences of the restricted Bell numbers are generalized.  A relation between this sequence and the
associated Bell numbers is established. The first statement generalizes   (\ref{eqr1}) and (\ref{rec-1}).

\begin{theorem}
 \label{formular1}
 The restricted Bell numbers $B_{n,\leq m}$ satisfy the recurrence
 \begin{equation}
 B_{n,\leq m}=\sum _{i=0}^{\lfloor \frac{n}{m}\rfloor} \frac{n!}{i!(m!)^i(n-im)!}B_{n-im,\leq m-1}.
 \end{equation}
\end{theorem}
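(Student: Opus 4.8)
The plan is to give a direct combinatorial decomposition, classifying each partition counted by $B_{n,\leq m}$ according to the number of its blocks that attain the maximal allowed size $m$. This parallels the proofs of Theorem~\ref{teorecr1} and \eqref{eqr1}, which are the special cases $m=3$ and $m=2$ (using $B_{k,\leq 1}=1$).

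First I would fix an integer $i$ with $0\le i\le\lfloor n/m\rfloor$ and count those partitions of $[n]$ into blocks of size at most $m$ that have \emph{exactly} $i$ blocks of size $m$. Choose the $im$ elements destined to lie in these maximal blocks: $\binom{n}{im}$ ways. Split those $im$ distinguished elements into $i$ unlabelled blocks, each of size exactly $m$: this can be done in $\dfrac{(im)!}{i!\,(m!)^i}$ ways (list the $im$ elements in a row, cut into consecutive chunks of length $m$, then divide by $(m!)^i$ for the internal order of each chunk and by $i!$ for the order of the $i$ chunks). The remaining $n-im$ elements may be partitioned arbitrarily into blocks of size at most $m-1$, contributing $B_{n-im,\leq m-1}$. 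Multiplying the three factors and using
$$\binom{n}{im}\cdot\frac{(im)!}{i!\,(m!)^i}=\frac{n!}{i!\,(m!)^i\,(n-im)!},$$
then summing over $i$, gives the stated identity. I do not expect a genuine obstacle; the only points needing a moment's care are the count $\dfrac{(im)!}{i!\,(m!)^i}$ for partitioning $im$ labelled elements into $i$ blocks of size $m$, and the observation that $0\le i\le\lfloor n/m\rfloor$ is exactly the feasible range (an empty sum/partition being allowed when $i=0$, and $n-im\ge 0$ forcing the upper limit).

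Alternatively, the identity follows from exponential generating functions: writing $F_m(x)=\exp\!\bigl(\sum_{k=1}^m x^k/k!\bigr)$, which equals $\sum_{n\ge0}B_{n,\leq m}\,x^n/n!$ by the evident extension of \eqref{expogen} and \eqref{genfun3}, one factors $F_m(x)=\exp(x^m/m!)\,F_{m-1}(x)$, expands $\exp(x^m/m!)=\sum_{i\ge0}\dfrac{x^{im}}{i!\,(m!)^i}$, and extracts the coefficient of $x^n/n!$ in the product, which is precisely the binomial convolution on the right-hand side. I would present the combinatorial argument as the main proof, since it simultaneously explains the appearance of the factor $B_{n-im,\leq m-1}$.
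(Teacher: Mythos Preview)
Your combinatorial argument is correct and is essentially the same as the paper's: both classify partitions by the number $i$ of blocks of size exactly $m$, count the ways to form those $i$ blocks (you via $\binom{n}{im}\cdot\frac{(im)!}{i!(m!)^i}$, the paper via the multinomial $\frac{n!}{(m!)^i(n-im)!}$ divided by $i!$), and multiply by $B_{n-im,\leq m-1}$ for the remainder. The generating-function alternative you sketch is a nice addition not present in the paper.
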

\begin{proof}
Count the set of all partitions of $[n]$ with blocks of size at most $k$ and contain exactly $i$ blocks of size $m$. To do so,
select $m\cdot i$ elements from $n$ without any order. This is done  in
$\binom{n}{\underbrace{m,m,\dots,m}_{\text{ $i$ times}}}=\frac{n!}{m!^i(n-im)!}$ ways. Now  divide by $i!$ to take into account the
 order of the blocks. The $n-im$ remaining elements of $[n]$ are placed in blocks of size $m-1$ or less elements,  counted
  by $B_{n-im,\leq m-1}$. The result follows by summing over $i$.
\end{proof}

A direct argument generalizes Theorems \ref{req1} and (\ref{genfun3}). This result appears in \cite{miksa-1958a}.

 \begin{theorem}
 \label{gfun}
The restricted Bell numbers $B_{n, \leq m}$ satisfy the recurrence
 \begin{equation*}
 B_{n, \leq m} = \sum_{k=0}^{m-1} \binom{n-1}{k} B_{n-k-1, \leq m}.
 \end{equation*}
Moreover, their  exponential generating function is
\begin{equation}
\sum_{n=0}^\infty \frac{B_{n,\leq m}x^n}{n!}=\exp\left(\sum_{i=1}^m \frac{x^i}{i!}\right).
\end{equation}
\end{theorem}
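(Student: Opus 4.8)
The plan is to prove both assertions of Theorem \ref{gfun} by a single combinatorial decomposition, mirroring the "first block" argument already used for the cases $m=2$ and $m=3$. For the recurrence, consider a partition of $[n]$ counted by $B_{n,\leq m}$ and look at the block containing the element $1$. If that block has size $k+1$ with $0 \le k \le m-1$, then it is determined by the choice of its remaining $k$ elements from the $n-1$ elements of $\{2,\dots,n\}$, which can be done in $\binom{n-1}{k}$ ways, and the partition induced on the $n-k-1$ leftover elements is an arbitrary partition into blocks of size at most $m$, of which there are $B_{n-k-1,\leq m}$. Summing over the admissible block sizes gives
\[
B_{n,\leq m} = \sum_{k=0}^{m-1} \binom{n-1}{k} B_{n-k-1,\leq m},
\]
which is exactly the stated recurrence. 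One should note the boundary conventions $B_{0,\leq m}=1$ and $B_{j,\leq m}=0$ for $j<0$ so that small values of $n$ are handled uniformly.

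For the exponential generating function, let $F(x) = \sum_{n\ge 0} B_{n,\leq m} x^n/n!$ and set $G(x) = \sum_{i=1}^m x^i/i!$; I would first multiply the recurrence by $x^{n-1}/(n-1)!$ and sum over $n \ge 1$. The left side becomes $F'(x)$. On the right side, the $k$-th term contributes
\[
\sum_{n\ge 1} \binom{n-1}{k} B_{n-k-1,\leq m} \frac{x^{n-1}}{(n-1)!}
 = \sum_{n\ge 1} \frac{B_{n-k-1,\leq m}}{k!\,(n-k-1)!}\, x^{n-1},
\]
and reindexing with $r = n-k-1 \ge 0$ turns this into $\dfrac{x^k}{k!} \cdot F(x)$. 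Summing over $k$ from $0$ to $m-1$ yields the differential equation
\[
F'(x) = \left( \sum_{k=0}^{m-1} \frac{x^k}{k!} \right) F(x) = G'(x)\, F(x),
\]
since $G'(x) = \sum_{k=1}^{m} x^{k-1}/(k-1)! = \sum_{k=0}^{m-1} x^k/k!$. Together with the initial condition $F(0) = B_{0,\leq m} = 1$, this linear ODE integrates to $F(x) = \exp(G(x)) = \exp\!\left(\sum_{i=1}^m x^i/i!\right)$, as claimed.

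I do not expect a genuine obstacle here: the combinatorial step is a routine first-block decomposition and the generating-function step is a standard translation of a convolution recurrence into a first-order linear ODE. The only points requiring a little care are the bookkeeping of boundary terms (ensuring the recurrence and the ODE agree at $n=0,1$), the interchange of the two summations when passing from the recurrence to $F'(x)=G'(x)F(x)$ (justified by formal power series manipulations, all series being well defined coefficientwise), and remembering that $G$ has no constant term so that $\exp(G(x))$ indeed has constant term $1$ matching $B_{0,\leq m}$. An alternative, even shorter route to the generating function — worth mentioning but not strictly needed — is the exponential formula: a partition into blocks of size at most $m$ is a set of such blocks, the exponential generating function for a single nonempty block of size at most $m$ is $\sum_{i=1}^m x^i/i!$, and the exponential formula immediately gives $F(x) = \exp\!\left(\sum_{i=1}^m x^i/i!\right)$.
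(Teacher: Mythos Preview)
Your proof is correct and matches what the paper intends: the paper does not actually give a proof of this theorem, stating only that ``a direct argument generalizes Theorems \ref{req1} and (\ref{genfun3})'' and citing \cite{miksa-1958a}. Your first-block decomposition is exactly the generalization of the combinatorial proof of Theorem \ref{req1}, and your translation of the recurrence into the ODE $F'(x)=G'(x)F(x)$ is the natural route to the generating function, so there is nothing to add.
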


 The next result generalizes  Theorem \ref{teo1a}.

\begin{theorem} \label{teo1ag}
Denote $f(i,j)=2+j+\binom{i-1}{2}$, then
\begin{align*}
B_{n+m,\leq k}=n!m!\sum_{X}\frac{B_{a_1,\leq k}B_{a_2\leq k}}{a_1!a_2!\prod _{i=2}^k\prod _{j=1}^{i-1}{j!^{a_{f(i,j)}}(i-j)!^{a_{f(i,j)}}a_{f(i,j)}!}},
\end{align*}
where $X$ stands for the following set of variables $$X=\{(a_1,a_2,\dots ,a_{1+k+\binom{k-1}{2}}): a_1+\sum _{i=2}^k\sum _{j=1}^{i-1}ja_{f(i,j)}=n \wedge a_2+\sum _{i=2}^k\sum _{j=1}^{i-1}(i-j)a_{f(i,j)}=m \}.$$
\end{theorem}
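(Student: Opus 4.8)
The plan is to run the same kind of argument used for Theorem~\ref{teo1a}, now keeping track of every possible \emph{shape} of a mixed block. Split the $n+m$ elements into disjoint sets $I_1$ and $I_2$ with $|I_1|=n$, $|I_2|=m$. Every partition $\pi$ of $[n+m]$ into blocks of size at most $k$ decomposes uniquely as $\pi=\pi_1\cup\pi_2\cup\pi_3$, where $\pi_1$ collects the blocks contained in $I_1$, $\pi_2$ those contained in $I_2$, and $\pi_3$ the blocks meeting both. Let $a_1$ and $a_2$ be the numbers of elements of $I_1$ and $I_2$ used by $\pi_1$ and $\pi_2$, respectively; then $\pi_1$ is counted by $B_{a_1,\leq k}$ and $\pi_2$ by $B_{a_2,\leq k}$.

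Next I would classify each block of $\pi_3$ by its shape $(i,j)$: total size $i$ with $2\leq i\leq k$, containing exactly $j$ elements of $I_1$ and $i-j$ of $I_2$, where $1\leq j\leq i-1$ because the block is mixed. Write $a_{f(i,j)}$ for the number of blocks of shape $(i,j)$. A direct check shows that $f(i,j)=2+j+\binom{i-1}{2}$ is an order-preserving bijection from the set of admissible pairs onto $\{3,4,\dots,1+k+\binom{k-1}{2}\}$: for fixed $i$ the numbers $f(i,1),\dots,f(i,i-1)$ are consecutive, and $f(i+1,1)=f(i,i-1)+1$ since $\binom{i}{2}=\binom{i-1}{2}+(i-1)$. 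Together with $a_1,a_2$ this accounts for all $1+k+\binom{k-1}{2}=2+\binom{k}{2}$ variables, and the profile $(a_1,a_2,(a_{f(i,j)}))$ is determined by $\pi$.

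Now I would fix such a profile and count the partitions realizing it. Distributing the elements of $I_1$ among $\pi_1$ and the various shapes contributes the multinomial coefficient $\binom{n}{a_1,\,(j\,a_{f(i,j)})_{i,j}}$ (well defined exactly when $a_1+\sum_{i,j}j\,a_{f(i,j)}=n$), and symmetrically $\binom{m}{a_2,\,((i-j)\,a_{f(i,j)})_{i,j}}$ for $I_2$ (requiring $a_2+\sum_{i,j}(i-j)\,a_{f(i,j)}=m$); the inner partitions give $B_{a_1,\leq k}B_{a_2,\leq k}$. Once the elements destined for shape $(i,j)$ are chosen, the number of ways to build $a_{f(i,j)}$ unordered blocks, each using $j$ of the selected $I_1$-elements and $i-j$ of the selected $I_2$-elements, is obtained by distributing the two sets into labeled blocks and then dividing once by $a_{f(i,j)}!$, namely $\dfrac{(j\,a_{f(i,j)})!\,((i-j)\,a_{f(i,j)})!}{j!^{a_{f(i,j)}}\,(i-j)!^{a_{f(i,j)}}\,a_{f(i,j)}!}$. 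Multiplying all these factors, the numerators $(j\,a_{f(i,j)})!$ and $((i-j)\,a_{f(i,j)})!$ cancel against the multinomials, leaving
\[
n!\,m!\,\frac{B_{a_1,\leq k}\,B_{a_2,\leq k}}{a_1!\,a_2!\,\prod_{i=2}^{k}\prod_{j=1}^{i-1}j!^{a_{f(i,j)}}\,(i-j)!^{a_{f(i,j)}}\,a_{f(i,j)}!},
\]
and summing over all profiles satisfying the two displayed constraints — that is, over $X$ — gives the stated identity. I expect the only delicate point to be this block-assembly count together with the subsequent cancellation, in particular producing a single factor $a_{f(i,j)}!$ (rather than its square) in the denominator, which reflects that the blocks of a given shape are de-labeled only once, not once per side.
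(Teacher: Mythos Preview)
Your proposal is correct and follows essentially the same argument as the paper: both split $[n+m]$ into $I_1$ and $I_2$, decompose $\pi=\pi_1\cup\pi_2\cup\pi_3$, classify the mixed blocks by shape $(i,j)$ with multiplicity $a_{f(i,j)}$, and arrive at the same block-assembly count $\dfrac{(j\,a_{f(i,j)})!\,((i-j)\,a_{f(i,j)})!}{j!^{a_{f(i,j)}}(i-j)!^{a_{f(i,j)}}a_{f(i,j)}!}$. Your version is slightly more explicit than the paper's in verifying that $f$ bijects the admissible shapes onto $\{3,\dots,1+k+\binom{k-1}{2}\}$ and in spelling out the cancellation with the multinomials, but the underlying decomposition and count are identical.
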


\begin{proof}
The set of $n + m$ elements, whose partitions are enumerated by $B_{n+m,\leq 3}$, is
split into two disjoint sets $I_1=[n]$ and $I_2=[n+m]\setminus [n]$ of cardinality $n$ and $m$, respectively. Any such
partition $\pi$ can be written uniquely in the form $\pi = \pi _1 \cup \pi _2 \cup \pi _3$, where blocks in $\pi _1$ are subsets of $I_1$, blocks in $\pi _2$ are subsets of $I_2$ and the blocks in $\pi _3$ contain elements of $I_1$ and $I_2$. Denote
 by $a_1$ the number of  elements that are going to be in blocks of $\pi _1$ and by $a_2$ the numbers of elements that
  are going to be in blocks of $\pi _2$. These are counted by $B_{a_1,\leq k}B_{a_2,\leq k}$.

The blocks in $\pi _3$ come in different forms depending in how many elements are in the blocks and how many come from
 $[n]$ and how many from $[n+m]\setminus [n]$. Denote by  $a_{f(i,j)}$  the number of blocks in $\pi _3$ which have $j>0$ elements
 of $[n]$ and $i-j>0$ from $[n+m]\setminus [n]$.  It is required to choose $ja_{f(i,j)}$ elements from $[n]$ and $(i-j)a_{f(i,j)}$
 from $[n+m]\setminus [n]$. The total number of choices for grouping the $a_{f(i,j)}$ blocks is given by
$$\binom{(i-j)a_{f(i,j)}}{\underbrace{i-j,i-j,\dots ,i-j}_{\text{ $a_{f(i,j)}$ times}}}\binom{ja_{f(i,j)}}{\underbrace{j,j,\dots,j}_{\text{$a_{f(i,j)}$ times}}}\frac{1}{a_{f(i,j)}!}=\frac{(ja_{f(i,j)})!((i-j)a_{f(i,j)})!}{j!^{a_{f(i,j)}}(i-j)!^{a_{f(i,j)}}a_{f(i,j)}!}.$$
The multinomial coefficient accounts  for the possible groups of each side and the factorial in the denominator accounts for the
 order of the blocks. Summing over all possible configurations gives the result.
\end{proof}

\subsection{Relations between restricted and associated Bell numbers}

The \textit{associated Stirling numbers of the second kind}  $ {n \brace k}_{\geq m }$ give the number of partitions
 of $n$ elements  into $k$ subsets under the restriction that every blocks contains \textit{at least }$m$ elements. Komatsu et al.
 \cite{komatsu-2015c}  derived the  recurrence
 \begin{align*}
{n+1 \brace k}_{\geq m}  =\sum_{j=m-1}^{n} \binom{n}{j}{n-j \brace k-1}_{\geq m}  = k  {n \brace k}_{\geq m}   +     \binom{n}{m-1} {n-m +1\brace k-1}_{\geq m},
 \end{align*}
     \noindent
with initial conditions $ {0 \brace 0}_{\geq m}   = 1$ and  ${n\brace 0}_{\geq m}  = 0.$ The \textit{associated Bell numbers}
are defined by
  \begin{equation*}
 B_{n, \geq m} = \sum_{k=0}^{n}  {n \brace k}_{\geq m }.
 \end{equation*}
They enumerate partitions of $n$ elements into blocks with the condition that every block has at least than $m$
elements.  For  example, $B_{4, \geq 3} =1 $ with the partition being $ \left\{ \{1 , 2 , 3, 4 \}  \right\}$. Their generating function is
\begin{equation}
\sum_{n=0}^\infty \frac{B_{n,\geq m}x^n}{n!}=\exp\left(\exp(x) - \sum_{i=0}^{m-1} \frac{x^i}{i!}\right).
\end{equation}

In the case $m=2$, $B_{n, \geq 2}$ enumerate partitions of $n$ elements without singleton blocks, it  satisfies  (cf. \cite{bona-2016a})
 \begin{equation}
  \label{idbell2}
 B_{n} = B_{n,\geq 2} + B_{n+1, \geq 2},
 \end{equation}
 \noindent
and its exponential generating function is given by
 \begin{equation}
 \sum_{n=0}^{\infty} B_{n, \geq 2} \, \frac{x^{n} }{n!} = \exp{ \left(\exp(x) - 1- x \right)}.
 \end{equation}
 \noindent
Therefore, the  binomial transform of $B_{n,\geq 2}$ is the Bell sequence $B_n$, i.e.,
\begin{equation}
\sum_{i=0}^n\binom{n}{i}B_{i,\geq 2}=B_n.
\end{equation}
\noindent
The fact that integer sequences and their inverse binomial transform have the same Hankel transform \cite{spivey-2006a}, gives
 the following result.

\begin{theorem}
The Hankel transform of the associated Bell numbers $B_{n,\geq 2}$ is the superfactorials. That is, for  any fixed $n$,
$$\det\begin{bmatrix}
B_{0,\geq 2} & B_{1,\geq 2} & B_{2,\geq 2} & \cdots & B_{n,\geq 2}\\
B_{1,\geq 2} & B_{2,\geq 2} & B_{3,\geq 2} & \cdots & B_{n+1,\geq 2}\\
\vdots & \vdots & \vdots &  & \vdots \\
B_{n,\geq 2} & B_{n+1,\geq 2} & B_{n+2,\geq 2} & \cdots & B_{2n,\geq 2}
\end{bmatrix}=\prod_{i=0}^{n}i!.$$
\end{theorem}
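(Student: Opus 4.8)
The plan is to mimic exactly the chain of reasoning that was used for $B_{n,\leq 2}$ in Section~\ref{sec-restricted2}, but reading the implications in the reverse direction, since here it is the \emph{inverse} binomial transform that is relevant rather than the binomial transform. Concretely, we already know from the discussion preceding the theorem that the binomial transform of the sequence $(B_{n,\geq 2})$ is the Bell sequence $(B_n)$, i.e.\ $\sum_{i=0}^n\binom{n}{i}B_{i,\geq 2}=B_n$; equivalently $(B_{n,\geq 2})$ is the \emph{inverse} binomial transform of $(B_n)$. I would state this at the outset and cite \cite{idbell2} (or rather the generating-function identity preceding it) for it.

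Next I would invoke the invariance principle: an integer sequence and its inverse binomial transform have the same Hankel transform \cite{spivey-2006a, layman-2001a}. This is the same tool used to prove Theorem~\ref{hankel-2}; the only difference is that there we passed from a sequence to its binomial transform, and here we pass from $(B_n)$ to its inverse binomial transform $(B_{n,\geq 2})$ — but the statement ``$A$ and its (inverse) binomial transform have equal Hankel transforms'' is symmetric, so it applies verbatim. Therefore the Hankel transform of $(B_{n,\geq 2})$ equals the Hankel transform of the Bell numbers $(B_n)$.

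Finally I would quote Aigner's result \cite{aigner-1999a}, already recalled in the paragraph introducing the Hankel transform, that the Hankel transform of the Bell numbers is the sequence of superfactorials $\prod_{i=0}^{n} i!$. Chaining the two equalities — Hankel transform of $(B_{n,\geq 2})$ equals Hankel transform of $(B_n)$ equals superfactorials — gives the displayed determinant identity and completes the proof.

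I do not anticipate a genuine obstacle here: the proof is essentially a two-line citation argument, and every ingredient (the binomial-transform relation between $B_{n,\geq 2}$ and $B_n$, the Hankel-invariance of the binomial transform, and Aigner's evaluation for the Bell numbers) has already been recorded in the excerpt. The only point requiring a word of care is to make sure the invariance lemma is applied in the correct direction — it is symmetric, so $(B_n)$ being the \emph{binomial} transform of $(B_{n,\geq 2})$ is exactly what we need — and to note that all sequences involved are integer sequences, which is clear since restricted/associated Bell numbers and ordinary Bell numbers are nonnegative integers by their combinatorial definitions.
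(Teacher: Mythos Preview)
Your proposal is correct and is essentially identical to the paper's argument: the paper notes that the binomial transform of $(B_{n,\geq 2})$ is the Bell sequence $(B_n)$, invokes the invariance of the Hankel transform under the (inverse) binomial transform \cite{spivey-2006a}, and then appeals to Aigner's evaluation \cite{aigner-1999a} for the Bell numbers. Your extra remark about the symmetry of the invariance lemma is a helpful clarification but not a departure from the paper's reasoning.
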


\begin{theorem}
The associated Bell numbers $B_{n,\geq 2}$ and the Bell numbers $B_n$ are related by
$$B_{n,\geq 2}=\sum _{i=0}^n(-1)^i\binom{n}{i}B_{n-i}.$$
\end{theorem}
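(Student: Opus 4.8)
The plan is to invert the binomial-transform relation $\sum_{i=0}^n \binom{n}{i} B_{i,\geq 2} = B_n$ established just above. Since the binomial transform is an involution up to sign, the identity $B_{n,\geq 2} = \sum_{i=0}^n (-1)^i \binom{n}{i} B_{n-i}$ is exactly the inverse binomial transform applied to $(B_n)$. Concretely, I would present two short proofs.

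First, an \emph{algebraic proof} via exponential generating functions. The sequence $(B_{n,\geq 2})$ has EGF $\exp(e^x-1-x)$, while the Bell numbers have EGF $\exp(e^x-1)$. Multiplying the latter by $e^{-x}$ gives $\exp(e^x-1-x)$, and since multiplication of an EGF by $e^{-x}$ corresponds at the level of coefficients to the operation $(a_n)\mapsto \left(\sum_{i=0}^n(-1)^i\binom{n}{i}a_{n-i}\right)$, the claimed formula follows immediately. This is essentially the same one-line argument used in the proof of Proposition \ref{propo}, just with $A(x) = \exp(e^x - 1)$ and $S(x) = \exp(e^x - 1 - x)$.

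Second, a \emph{combinatorial proof} by inclusion–exclusion. Starting from all $B_n$ set partitions of $[n]$, I would use inclusion–exclusion on the "bad" property of having a prescribed element appear as a singleton. For a subset $T\subseteq[n]$ of size $i$, the number of partitions of $[n]$ in which every element of $T$ is a singleton is $B_{n-i}$ (the elements of $T$ are forced into their own blocks, and the remaining $n-i$ elements are partitioned arbitrarily). Summing $(-1)^i$ over all choices of $T$, sieve out precisely the partitions with no singleton block, which are counted by $B_{n,\geq 2}$; this yields $B_{n,\geq 2}=\sum_{i=0}^n(-1)^i\binom{n}{i}B_{n-i}$. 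Alternatively one can simply cite the general fact that a sequence and its binomial transform are mutually recoverable \cite{layman-2001a, spivey-2006a} and apply it to the relation $\sum_{i=0}^n\binom{n}{i}B_{i,\geq 2}=B_n$ proved immediately before.

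I do not anticipate a genuine obstacle here; the only minor point requiring care is the bookkeeping in the inclusion–exclusion argument — making sure that the overcount is organized by the \emph{set} of forced singletons rather than by individual singletons, so that the multinomial/binomial factor $\binom{n}{i}$ appears correctly and the residual count is exactly $B_{n-i}$ rather than a partition count with additional constraints. Given that the EGF identity $\sum B_{n,\geq 2}x^n/n! = \exp(e^x-1-x)$ is already recorded above, the generating-function proof is the cleanest route and is the one I would lead with.
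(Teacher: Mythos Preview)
Your proposal is correct, and your second approach---inclusion--exclusion on the events ``element $i$ is a singleton''---is exactly the paper's proof: the paper sets $\mathcal{S}_{n,i}$ to be the partitions in which $i$ is a singleton, writes $\mathcal{B}_{n,\geq 2}=\mathcal{B}_n\setminus\bigcup_i\mathcal{S}_{n,i}$, and applies inclusion--exclusion using $\left|\bigcap_{j=1}^i\mathcal{S}_{n,a_j}\right|=B_{n-i}$.

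Your first approach via exponential generating functions (multiply the Bell EGF by $e^{-x}$) is a genuinely different and shorter route that the paper does not take for this theorem, though it does use the same device in Proposition~\ref{propo}. The EGF argument buys brevity and makes the connection to the binomial-transform involution explicit; the paper's inclusion--exclusion argument buys a direct combinatorial interpretation of each term $(-1)^i\binom{n}{i}B_{n-i}$ without appealing to generating functions. Your third option---simply inverting the already-established relation $\sum_i\binom{n}{i}B_{i,\geq 2}=B_n$---is also valid and arguably the most economical, given what the paper has already recorded.
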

\begin{proof}
Let $\mathcal{B}_n$ be the set of all partitions of $[n]$, and let $\mathcal{B}_{n,\geq 2}$ be the set of all partitions into blocks of
length at least than  $2$. Denote by $\mathcal{S}_{n,i}$ the set of partitions where $i$ is in a singleton block. Then
\begin{equation}
\mathcal{B}_{n,\geq 2}=\mathcal{B}_{n}\setminus \bigcup _{i\in [n]} \mathcal{S}_{n,i},
\end{equation}
\noindent
and the inclusion-exclusion principle produces
 $$B_{n,\geq 2}=B_n-\sum _{i=1}^n(-1)^{i-1}\sum _{a_1<a_2<\cdots <a_i} \left| \bigcap _{j=1}^i S_{n,a_j} \right|.$$
The identity now follows from $ \left| \displaystyle \bigcap _{j=1}^i S_{n,a_j} \right|=B_{n-i}$.
\end{proof}

The next result gives a reduction for the associated Bell numbers $B_{n, \geq k}$, in the index $k$ counting the minimal number of
elements  in a block.

\begin{theorem}
\label{asbell}
The associated  Bell numbers $B_{n,\geq k}$ satisfy
$$B_{n,\geq k}=B_{n,\geq k-1}-\sum _{i=1}^{\lfloor \frac{n}{k-1}\rfloor}\frac{n!}{(k-1)!^i(n-(k-1)i)!i!}B_{n-(k-1)i,\geq k}.$$
\end{theorem}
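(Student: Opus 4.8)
The plan is to decompose $\mathcal{B}_{n,\geq k-1}$, the set of partitions of $[n]$ into blocks of size at least $k-1$, according to how many blocks have size exactly $k-1$. Write $\mathcal{B}_{n,\geq k-1} = \mathcal{B}_{n,\geq k} \sqcup \mathcal{C}_{n}$, where $\mathcal{C}_n$ is the set of partitions having at least one block of size exactly $k-1$. The heart of the argument is to count $\mathcal{C}_n$ by instead counting the complementary quantity $\lvert\mathcal{B}_{n,\geq k-1}\rvert - \lvert\mathcal{B}_{n,\geq k}\rvert$ in a different way, namely by a marked/inclusion–exclusion count over the blocks of size $k-1$.

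The key steps, in order, are as follows. First I would set up the over-counted object: pairs $(\pi, S)$ where $\pi \in \mathcal{B}_{n,\geq k-1}$ and $S$ is a nonempty collection of $i$ blocks of $\pi$ each of size exactly $k-1$. To build such a configuration with a prescribed $i$: choose which $(k-1)i$ elements of $[n]$ go into the $i$ marked blocks and partition them into $i$ unordered groups of size $k-1$, which can be done in $\binom{n}{(k-1),(k-1),\dots,(k-1),\,n-(k-1)i}\cdot\frac{1}{i!} = \frac{n!}{(k-1)!^{i}(n-(k-1)i)!\,i!}$ ways; then partition the remaining $n-(k-1)i$ elements into blocks of size at least $k-1$, i.e.\ freely as an element of $\mathcal{B}_{n-(k-1)i,\geq k-1}$. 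Summing, the number of such pairs is $\sum_{i=1}^{\lfloor n/(k-1)\rfloor}\frac{n!}{(k-1)!^{i}(n-(k-1)i)!\,i!}\,B_{n-(k-1)i,\geq k-1}$. Second, I would observe that the same pairs are counted by first choosing $\pi$, then choosing a nonempty subset of its size-$(k-1)$ blocks; by the binomial theorem the number of pairs attached to a fixed $\pi$ is $2^{(\text{number of size-}(k-1)\text{ blocks of }\pi)}-1$, which is $0$ precisely when $\pi\in\mathcal{B}_{n,\geq k}$ and positive otherwise. Third, to actually extract $B_{n,\geq k-1}-B_{n,\geq k}=\lvert\mathcal{C}_n\rvert$ I would run the count with a signed weight: attach to $(\pi,S)$ the sign $(-1)^{\lvert S\rvert-1}$, so that $\sum_{S\neq\emptyset,\, S\subseteq(\text{size-}(k-1)\text{ blocks of }\pi)}(-1)^{\lvert S\rvert-1}$ equals $1$ if $\pi$ has at least one block of size $k-1$ and $0$ otherwise, i.e.\ the indicator of $\mathcal{C}_n$. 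This yields $B_{n,\geq k-1}-B_{n,\geq k}=\sum_{i\geq 1}(-1)^{i-1}\frac{n!}{(k-1)!^{i}(n-(k-1)i)!\,i!}\,B_{n-(k-1)i,\geq k-1}$, which is the claimed identity but with $\geq k-1$ on the right-hand side rather than $\geq k$.

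The remaining and subtler point is reconciling the index on the right-hand side: the stated formula has $B_{n-(k-1)i,\geq k}$. Here I would use the recursion iteratively, or note that the cleaner route avoids signs entirely: in the unsigned pair count above, require additionally that the $n-(k-1)i$ leftover elements contain \emph{no} block of size $k-1$, i.e.\ they form an element of $\mathcal{B}_{n-(k-1)i,\geq k}$, and let $i$ be the \emph{total} number of size-$(k-1)$ blocks of $\pi$. Then every $\pi\in\mathcal{C}_n$ is counted exactly once (with $i$ its actual number of minimal blocks), giving directly $$B_{n,\geq k-1}-B_{n,\geq k}=\lvert\mathcal{C}_n\rvert=\sum_{i=1}^{\lfloor n/(k-1)\rfloor}\frac{n!}{(k-1)!^{i}(n-(k-1)i)!\,i!}\,B_{n-(k-1)i,\geq k},$$ which rearranges to the theorem. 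I expect the main obstacle to be exactly this bookkeeping: making sure each partition is counted with multiplicity one (this forces $i$ to be the exact number of size-$(k-1)$ blocks and forces the residual partition to lie in $\mathcal{B}_{n-(k-1)i,\geq k}$, not merely $\mathcal{B}_{n-(k-1)i,\geq k-1}$), and justifying the multinomial-over-$i!$ factor correctly since the $i$ distinguished blocks are unordered and all of equal size. A generating-function check is available as a sanity test: with $E_{k}(x)=\exp\!\big(e^{x}-\sum_{j=0}^{k-1}x^{j}/j!\big)$ the asserted identity is equivalent to $E_{k-1}(x)=E_{k}(x)\exp\!\big(x^{k-1}/(k-1)!\big)$, i.e.\ $E_{k-1}(x)/E_{k}(x)=\exp(x^{k-1}/(k-1)!)$, which is immediate from the definition; this both confirms the statement and guides the combinatorial argument.
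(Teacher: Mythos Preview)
Your proposal is correct, and the argument you label the ``cleaner route'' --- partitioning $\mathcal{C}_n=\mathcal{B}_{n,\geq k-1}\setminus\mathcal{B}_{n,\geq k}$ according to the exact number $i$ of blocks of size $k-1$, with the remaining $n-(k-1)i$ elements forced into $\mathcal{B}_{n-(k-1)i,\geq k}$ --- is precisely the paper's proof. The signed inclusion--exclusion detour you explore first is unnecessary (and, as you note, produces the wrong index $\geq k-1$), so you should lead with the direct count; the generating-function sanity check is a nice addition not present in the paper.
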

\begin{proof}
Denote by $\mathcal{B}_{n,\geq k}$ the set of all partitions with blocks of length at least than $k$. Then
$\mathcal{B}_{n,\geq k}\subseteq \mathcal{B}_{n,\geq k-1}$ and let $A=\mathcal{B}_{n,\geq k-1}\setminus \mathcal{B}_{n,\geq k}$
be the set difference. For $1 \leq k \leq n$, define
$$A_i = \{\pi \in \mathcal{B}_{n,\geq k-1}: \text{ the number of blocks of size $k-1$ is $i$}\},  $$
\noindent
and observe that
$$A:= \bigcup _{i=1}^n A_i =\mathcal{B}_{n,\geq k-1}\setminus \mathcal{B}_{n,\geq k}.$$
The sets $\{A_i\}$ form  a partition of $A$ with
\begin{equation}
|A_i|= \frac{1}{i!} \binom{n}{k-1,k-1,\dots ,k-1}B_{n-i(k-1),\geq k}.
\end{equation}
\noindent
The identity follows from this.
\end{proof}

\begin{theorem}
The associated Bell numbers can be calculated from the Bell numbers and restricted Bell numbers via
\begin{equation}
B_{n,\geq k}=B_n-\sum _{i =1}^n\binom{n}{i}B_{i,\leq k-1}B_{n-i,\geq k}.
\end{equation}
\end{theorem}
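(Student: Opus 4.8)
The plan is to argue by a complementation--inclusion--exclusion argument on the presence of small blocks. Set $\mathcal{B}_n$ for the set of all partitions of $[n]$ and $\mathcal{B}_{n,\geq k}$ for those all of whose blocks have at least $k$ elements. The key observation is that every partition $\pi \in \mathcal{B}_n$ decomposes uniquely into the sub-partition consisting of the blocks of size $\leq k-1$ and the sub-partition consisting of the blocks of size $\geq k$. If the ``small part'' occupies a subset $T \subseteq [n]$ with $|T| = i$, then the small part is an arbitrary element of $\mathcal{B}_{i,\leq k-1}$ (a partition of $T$ into blocks of size at most $k-1$), while the ``large part'' is an arbitrary element of $\mathcal{B}_{n-i,\geq k}$ on the complement.

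First I would make this bijection explicit: the map $\pi \mapsto (T, \pi|_T, \pi|_{[n]\setminus T})$ is a bijection from $\mathcal{B}_n$ onto the disjoint union over $i$ of triples consisting of an $i$-subset $T$, a partition of $T$ counted by $B_{i,\leq k-1}$, and a partition of the complement counted by $B_{n-i,\geq k}$. Summing over $i$ and over the $\binom{n}{i}$ choices of $T$ gives
\begin{equation*}
B_n = \sum_{i=0}^{n} \binom{n}{i} B_{i,\leq k-1} B_{n-i,\geq k}.
\end{equation*}
Here one uses the natural conventions $B_{0,\leq k-1} = 1$ and $B_{0,\geq k} = 1$. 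Next I would isolate the $i=0$ term, which is $B_{n,\geq k}$, and move it to the left, leaving exactly the stated identity
\begin{equation*}
B_{n,\geq k} = B_n - \sum_{i=1}^{n} \binom{n}{i} B_{i,\leq k-1} B_{n-i,\geq k}.
\end{equation*}

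Alternatively, and perhaps more in keeping with the inclusion--exclusion proofs used earlier in this section, one could derive the same convolution identity at the level of exponential generating functions: by Theorem \ref{gfun} and the generating function for $B_{n,\geq k}$ displayed above, the product $\exp\!\left(\sum_{i=1}^{k-1} x^i/i!\right) \cdot \exp\!\left(\exp(x) - \sum_{i=0}^{k-1} x^i/i!\right)$ simplifies to $\exp(\exp(x)-1)$, the generating function of the Bell numbers; extracting the coefficient of $x^n/n!$ reproduces the convolution, and isolating the $i=0$ term finishes it. I expect the only genuinely delicate point to be the bookkeeping of the small-block versus large-block conventions when $i$ is near $0$ or $n$ — in particular making sure that the case where $\pi$ has no small blocks (which is precisely $\pi \in \mathcal{B}_{n,\geq k}$) is accounted for exactly once, and that a block of size exactly $k-1$ is classified consistently (it belongs to the ``small'' part, matching the ``$\leq k-1$'' label). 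Everything else is routine.
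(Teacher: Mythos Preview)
Your argument is correct and is essentially the same as the paper's: both decompose an arbitrary partition of $[n]$ according to which elements lie in blocks of size at most $k-1$ versus at least $k$, count the resulting pieces by $\binom{n}{i}B_{i,\leq k-1}B_{n-i,\geq k}$, and isolate the $i=0$ term. The generating-function alternative you sketch is a pleasant extra that the paper does not include, but the main combinatorial route matches.
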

\begin{proof}
Recall  that $\mathcal{B}_n$ is the set of partitions of $[n]$. For any such partition, write  $\pi = \{A,B\}$, where
$
A=\{\pi \in \mathcal{B}_n: \text{if }D\in \pi, \text{ then }|D|\geq k \},
$
and $B$ the complement of $A$ in $\mathcal{B}_{n}$. Then $|A|+|B|=B_n$.
Now $|A|=B_{n,\geq k}$ and $B$ can be partitioned in $\{C_i\}_{i\in [n]}$ where $C_i$ contains the partitions such that
 there are exactly $i$ elements of $[n]$ that are in blocks with length less than $k$ and the remaining $n-i$ are in blocks
  with length greater or equal to $k$. Therefore
$$|C_i|=\binom{n}{i}B_{i,\leq k-1}B_{n-i,\geq k},$$ and the result follows by summing over all  partitions of $[n]$.
\end{proof}

 The next result is the analog of Theorem \ref{teo1ag} for the case of the associated Bell numbers.

\begin{theorem}
Denote $f(i,j)=2+j+\binom{i-1}{2}$, then
$$B_{n+m,\geq k}=n!m!\sum_{X}\frac{B_{a_1,\geq k}B_{a_2, \geq k}}{a_1!a_2!\prod _{i=k}^{n+m}\prod _{j=1}^{i-1}{j!^{a_{f(i,j)}}(i-j)!^{a_{f(i,j)}}a_{f(i,j)}!}},$$
where $X$ stands for the following set of variables
\begin{multline*}
X=\left\{ (a_1,a_2,a_{3+\binom{k-1}{2}}, \dots ,a_{2+n+m-1+\binom{n+m-1}{2}}): \right. \\
 \left. a_1+\sum _{i=k}^{n+m}\sum _{j=1}^{i-1}ja_{f(i,j)}=n \wedge a_2+\sum _{i=k}^{n+m}\sum _{j=1}^{i-1}(i-j)a_{f(i,j)}=m \right\}.
 \end{multline*}
\end{theorem}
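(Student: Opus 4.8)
The plan is to mimic the proof of Theorem \ref{teo1ag} almost verbatim, replacing the constraint ``blocks of size $\leq k$'' by the constraint ``blocks of size $\geq k$'', and tracking how this changes the ranges of the summation indices. As before, split the ground set of $n+m$ elements into $I_1 = [n]$ and $I_2 = [n+m]\setminus[n]$, and decompose an arbitrary partition $\pi$ counted by $B_{n+m,\geq k}$ uniquely as $\pi = \pi_1 \cup \pi_2 \cup \pi_3$, where $\pi_1$ consists of the blocks contained in $I_1$, $\pi_2$ of the blocks contained in $I_2$, and $\pi_3$ of the ``mixed'' blocks meeting both $I_1$ and $I_2$. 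Let $a_1$ be the number of elements lying in blocks of $\pi_1$ and $a_2$ the number lying in blocks of $\pi_2$; since each of $\pi_1,\pi_2$ is itself a partition (of an $a_1$-set, resp.\ an $a_2$-set) into blocks of size $\geq k$, these contribute the factor $B_{a_1,\geq k}B_{a_2,\geq k}$.

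Next I would classify the mixed blocks in $\pi_3$ by the pair $(i,j)$, where $i$ is the total size of the block and $j$ the number of its elements coming from $I_1$; mixedness forces $1 \leq j \leq i-1$, and the size constraint forces $i \geq k$, while trivially $i \leq n+m$. Writing $a_{f(i,j)}$ for the number of such $(i,j)$-blocks, with $f(i,j)=2+j+\binom{i-1}{2}$ the same indexing bijection used in Theorem \ref{teo1ag}, the number of ways to fill the $a_{f(i,j)}$ blocks of type $(i,j)$ is, exactly as in the earlier proof,
\begin{equation*}
\binom{(i-j)a_{f(i,j)}}{\underbrace{i-j,\dots,i-j}_{a_{f(i,j)}}}\binom{ja_{f(i,j)}}{\underbrace{j,\dots,j}_{a_{f(i,j)}}}\frac{1}{a_{f(i,j)}!}
=\frac{(ja_{f(i,j)})!\,((i-j)a_{f(i,j)})!}{j!^{a_{f(i,j)}}(i-j)!^{a_{f(i,j)}}a_{f(i,j)}!},
\end{equation*}
the final $a_{f(i,j)}!$ correcting for the irrelevant order of blocks of a common type. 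The number of ways to distribute the $n$ elements of $I_1$ among the parts $\pi_1$ (size $a_1$) and the $I_1$-sides of all mixed blocks is the multinomial $n!/\big(a_1!\prod_{i,j} (j!^{a_{f(i,j)}})\cdot(\text{something})\big)$; more cleanly, assembling all the $I_1$-contributions gives a factor $n!/\big(a_1!\prod_{i=k}^{n+m}\prod_{j=1}^{i-1} j!^{a_{f(i,j)}}\big)$ times the product of the $(ja_{f(i,j)})!$ cancelling against the multinomials above — so one should be a little careful to pair these up correctly, and symmetrically $m!/\big(a_2!\prod (i-j)!^{a_{f(i,j)}}\big)$ for $I_2$. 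Collecting everything, summing over all admissible tuples, and noting that admissibility is precisely $a_1+\sum_{i=k}^{n+m}\sum_{j=1}^{i-1} j\,a_{f(i,j)} = n$ and $a_2+\sum_{i=k}^{n+m}\sum_{j=1}^{i-1}(i-j)\,a_{f(i,j)} = m$, yields the stated formula, with the index set $X$ ranging over tuples indexed from $a_1,a_2$ up through $a_{2+(n+m-1)+\binom{n+m-1}{2}}$ since the largest possible block size is $n+m$ and hence the largest possible value of $i$ is $n+m$.

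The only real subtlety — and the step I would double-check rather than call routine — is verifying that the lower cutoff $i \geq k$ is correctly and exclusively responsible for the difference between this theorem and Theorem \ref{teo1ag}: in the restricted case the index $i$ ran up to $k$ (blocks capped in size) whereas here it runs from $k$ up to $n+m$. One must confirm that \emph{no} additional constraint is hiding — e.g.\ that a mixed block of size $i\geq k$ is automatically ``valid'' regardless of the split $j$, which is clear, and that the pieces $\pi_1,\pi_2$ inherit exactly the $\geq k$ constraint and nothing more, which is also clear from the decomposition. Since every bookkeeping factor matches the earlier proof line for line, I would present this proof compactly by referencing Theorem \ref{teo1ag} for the combinatorial identities of the multinomials and only spelling out the changed index ranges. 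Hence I would write: ``The argument is identical to that of Theorem \ref{teo1ag}, except that mixed blocks must now have total size $i \geq k$ rather than $i \leq k$, and there is no upper bound on block size other than the trivial $i \leq n+m$; this changes the ranges of the products and of the defining equations for $X$ exactly as stated, and the result follows.'' $\qed$
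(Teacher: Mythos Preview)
Your proposal is correct and takes exactly the approach the paper intends: the paper gives no proof at all for this statement, merely remarking that it is ``the analog of Theorem~\ref{teo1ag} for the case of the associated Bell numbers,'' and your argument carries out precisely that analogy, changing the range of block sizes from $2\leq i\leq k$ to $k\leq i\leq n+m$. Your final compact formulation is essentially what the paper leaves implicit.
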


\section{The General Case $A_{n,\leq m}$.}
\label{sec5}

This section discusses the results presented in the previous section corresponding to the class $A_{n, \leq m}$.

\smallskip

The first statement  generalizes  Theorem \ref{teoas3} and is the analog of  Theorem \ref{formular1}. The proof is similar to the
one given above. Details are omitted.

 \begin{theorem}
 The restricted factorial numbers  $A_{n, \leq m}$ are given by
 \begin{equation*}
A_{n, \leq m}  =\sum _{i=0}^{\lfloor \frac{n}{m}\rfloor} \frac{n!}{m^ii!(n-im)!}A_{n-im,\leq m-1}.
 \end{equation*}
\end{theorem}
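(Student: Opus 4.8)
The plan is to give a direct combinatorial count, entirely parallel to the proof of Theorem \ref{formular1}, but keeping track of cyclic arrangements instead of unordered blocks. First I would enumerate the permutations of $[n]$ all of whose cycles have length at most $m$ according to the number $i$ of cycles of length exactly $m$; summing over $i$ from $0$ to $\lfloor n/m\rfloor$ accounts for every such permutation exactly once.

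For a fixed $i$, I would construct the relevant permutations in three steps. First, choose the set of $mi$ elements that lie on the length-$m$ cycles; this can be done in $\binom{n}{mi}=\tfrac{n!}{(mi)!\,(n-mi)!}$ ways. Second, distribute these $mi$ elements into $i$ (unordered) $m$-cycles: the number of ways to split $mi$ labelled elements into $i$ unordered blocks of size $m$ is the multinomial $\tfrac{(mi)!}{(m!)^i\,i!}$, and each block of size $m$ can be turned into a cycle in $(m-1)!$ ways, so this step contributes $\tfrac{(mi)!}{(m!)^i\,i!}\bigl((m-1)!\bigr)^i=\tfrac{(mi)!}{m^i\,i!}$. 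Third, arrange the remaining $n-mi$ elements into cycles of length at most $m-1$, which by definition happens in $A_{n-mi,\leq m-1}$ ways.

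Multiplying the three contributions and simplifying gives $\tfrac{n!}{(mi)!\,(n-mi)!}\cdot\tfrac{(mi)!}{m^i\,i!}\cdot A_{n-mi,\leq m-1}=\tfrac{n!}{m^i\,i!\,(n-mi)!}\,A_{n-mi,\leq m-1}$, and summing over $i$ yields the stated identity. The only point requiring care — the main, though routine, obstacle — is the count in the second step: one must include the factor $(m-1)!$ for each cyclic arrangement and divide by $i!$ because the $i$ cycles of common size $m$ are interchangeable; the cancellation $\bigl((m-1)!\bigr)^i/(m!)^i=1/m^i$ is precisely what turns the $B_{n,\leq m}$ recurrence of Theorem \ref{formular1} into the present one.
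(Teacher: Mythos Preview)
Your proof is correct and follows exactly the approach the paper intends: the paper omits the details here, simply stating that the proof is similar to that of Theorem~\ref{formular1}, and your argument is precisely that parallel combinatorial count with the additional $(m-1)!$ factor per cycle. The cancellation $((m-1)!)^i/(m!)^i = 1/m^i$ you highlight is exactly the point that distinguishes the factorial case from the Bell case.
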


The next statement is found in \cite{mezo-2014a}.

\begin{theorem}
\label{teo52}
 The restricted factorial sequence $A_{n, \leq m}$ satisfies the recurrence
 \begin{equation*}
 A_{n, \leq m} =  \sum_{j=0}^{m-1} \frac{(n-1)!}{(n-1-j)!} A_{n-1-j,\leq m},
 \end{equation*}
 with initial conditions $A_{0, \leq m} = 1\ \text{and}  \ A_{1, \leq m} = 1.$
 Its generating function is
  \begin{equation*}
 \sum_{n=0}^{\infty} A_{n, \leq m} \, \frac{x^{n} }{n!} = \exp{ \left( x + \tfrac{1}{2} x^{2} + \tfrac{1}{3} x^{3} +\cdots + \tfrac{1}{m}x^m\right)}.
 \end{equation*}
 \end{theorem}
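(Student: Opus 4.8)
The plan is to prove the recurrence by a direct combinatorial argument — conditioning on the cycle that contains the smallest element — and then to convert this recurrence into a first-order linear ODE for the exponential generating function, which integrates to the stated product of exponentials. (This mirrors the treatment of the Bell analog in Theorem \ref{gfun}.) Alternatively, one can obtain the generating function first from the exponential formula, since a permutation is a set of cycles, a cycle of length $i$ contributes $x^i/i$ to the exponential generating function, and restricting to $i\le m$ gives $\exp\bigl(\sum_{i=1}^m x^i/i\bigr)$; the recurrence is then read off the identity $F'(x)=(1+x+\cdots+x^{m-1})F(x)$ by comparing coefficients. I would present the combinatorial route.

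For the recurrence, fix $n\ge 1$ and consider a permutation of $[n]$ all of whose cycles have size at most $m$. Look at the cycle $C$ that contains the element $1$; its size is $j+1$ for some $j\in\{0,1,\dots,m-1\}$. To build such a permutation one chooses the $j$ other members of $C$ among the $n-1$ remaining elements, in $\binom{n-1}{j}$ ways; arranges the $j+1$ elements of $C$ into a cycle with $1$ in a fixed position, in $j!$ ways (there are $(j+1-1)!=j!$ distinct cycles on a $(j+1)$-element set); and places the remaining $n-1-j$ elements into a permutation whose cycles all have size at most $m$, in $A_{n-1-j,\le m}$ ways. Since $\binom{n-1}{j}j!=(n-1)!/(n-1-j)!$, summing over $j$ yields
\[
A_{n,\le m}=\sum_{j=0}^{m-1}\frac{(n-1)!}{(n-1-j)!}\,A_{n-1-j,\le m}.
\]
Terms with $j\ge n$ vanish automatically because the falling factorial $(n-1)(n-2)\cdots(n-j)$ then contains a zero factor, consistent with the convention $A_{k,\le m}=0$ for $k<0$; the initial values $A_{0,\le m}=A_{1,\le m}=1$ are immediate (and then $A_{2,\le m}=2$, etc.).

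For the generating function, set $F(x)=\sum_{n\ge 0}A_{n,\le m}x^n/n!$. Multiplying the recurrence by $x^{n-1}/(n-1)!$ and summing over $n\ge 1$, the left side is $F'(x)$. On the right side, for fixed $j$ substitute $\ell=n-1-j$, so that $x^{n-1}/(n-1-j)!=x^j\cdot x^\ell/\ell!$, and the double sum factors as $\bigl(\sum_{j=0}^{m-1}x^j\bigr)F(x)$. Hence
\[
F'(x)=\bigl(1+x+x^2+\cdots+x^{m-1}\bigr)F(x),\qquad F(0)=1,
\]
so that $\frac{d}{dx}\log F(x)=1+x+\cdots+x^{m-1}$ and therefore $\log F(x)=x+\tfrac{x^2}{2}+\cdots+\tfrac{x^m}{m}$, which is the asserted formula.

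The argument is routine; the only points needing care are the count $j!$ for cyclic arrangements of a $(j+1)$-element set with a marked element — this is precisely where the factorial numbers diverge from the Bell numbers of Theorem \ref{gfun}, in which each $k$-subset used as a block is counted once — and the bookkeeping of the index ranges, where one checks that the combinatorially meaningless terms ($j+1>n$) are exactly those killed by the coefficient $(n-1)!/(n-1-j)!$. Justifying the termwise rearrangement of the series is harmless since the sum over $j$ is finite and $F$ is a formal power series (in fact entire). There is thus no genuine obstacle, which is why the paper remarks that the details may be omitted.
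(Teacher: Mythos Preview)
Your proof is correct. The paper does not actually prove this theorem---it simply states it as appearing in \cite{mezo-2014a}---but your argument is exactly the cycle analog of the paper's combinatorial proof of Theorem~\ref{req1} (condition on the block/cycle containing the minimal element), together with the standard passage from the recurrence to the ODE $F'=(1+x+\cdots+x^{m-1})F$. One minor remark: the closing line about the paper ``remark[ing] that the details may be omitted'' is slightly off---that comment attaches to the preceding theorem, while for Theorem~\ref{teo52} the paper only cites \cite{mezo-2014a}---but this does not affect the mathematics.
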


 The next reduction formula gives $A_{n+m, \leq k}$ in terms of lower value of the first index.

\begin{theorem}\label{spiveyres}
Denote $f(i,j)=2+j+\binom{i-1}{2}$, then
 $$A_{n+m,\leq k}=n!m!\sum_{X}\frac{A_{a_1,\leq k}A_{a_2\leq k}}{a_1!a_2!}\prod _{i=2}^k\prod _{j=1}^{i-1}\binom{i}{j}^{a_{f(i,j)}}\frac{1}{i^{a_{f(i,j)}} \cdot a_{f(i,j)}!},$$
where $X$ stands for the following set of variables $$X=\{(a_1,a_2,\dots ,a_{1+k+\binom{k-1}{2}}): a_1+\sum _{i=2}^k\sum _{j=1}^{i-1}ja_{f(i,j)}=n \wedge a_2+\sum _{i=2}^k\sum _{j=1}^{i-1}(i-j)a_{f(i,j)}=m \}.$$
 \end{theorem}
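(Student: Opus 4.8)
\emph{Proof proposal.} The plan is to repeat, essentially verbatim, the bijective counting argument behind Theorem~\ref{teo1ag}, with every set-block contribution replaced by the corresponding cycle contribution. First I would split the ground set of the $A_{n+m,\leq k}$ permutations into $I_1=[n]$ and $I_2=[n+m]\setminus[n]$. Any permutation $\sigma$ whose cycles all have length at most $k$ decomposes uniquely as $\sigma=\sigma_1\sigma_2\sigma_3$, where the cycles of $\sigma_1$ lie entirely inside $I_1$, those of $\sigma_2$ entirely inside $I_2$, and every cycle of $\sigma_3$ meets both $I_1$ and $I_2$. Let $a_1$ be the number of elements of $I_1$ lying in cycles of $\sigma_1$ and $a_2$ the number of elements of $I_2$ lying in cycles of $\sigma_2$; since cycles of $\sigma_1$ and $\sigma_2$ are cycles of $\sigma$, they still have length $\leq k$, so these pieces are counted by $A_{a_1,\leq k}$ and $A_{a_2,\leq k}$. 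For a mixed cycle of total length $i$ (so $2\le i\le k$) with exactly $j$ elements from $I_1$ and $i-j$ from $I_2$ ($1\le j\le i-1$), write $a_{f(i,j)}$ for the number of such cycles. Then $a_1+\sum_{i,j}ja_{f(i,j)}=n$ and $a_2+\sum_{i,j}(i-j)a_{f(i,j)}=m$ are exactly the defining equations of $X$, and, as in Theorem~\ref{teo1ag}, $f(i,j)=2+j+\binom{i-1}{2}$ lists the admissible pairs $(i,j)$ without repetition in the slots $3,\dots,1+k+\binom{k-1}{2}$.

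Next I would count, for a fixed tuple of $X$, the permutations realizing it. Handing out the labels of $I_1$ among $\sigma_1$ and the mixed-cycle types contributes the multinomial $n!\big/\big(a_1!\prod_{i,j}(ja_{f(i,j)})!\big)$, and similarly $m!\big/\big(a_2!\prod_{i,j}((i-j)a_{f(i,j)})!\big)$ for $I_2$; the factors $A_{a_1,\leq k}$ and $A_{a_2,\leq k}$ build $\sigma_1$ and $\sigma_2$. For a fixed type $(i,j)$ with $a:=a_{f(i,j)}$, the $ja$ designated $I_1$-labels split into $a$ ordered groups of size $j$ in $(ja)!/(j!)^a$ ways, the $(i-j)a$ designated $I_2$-labels into $a$ ordered groups of size $i-j$ in $((i-j)a)!/((i-j)!)^a$ ways, the $t$-th group from each side is matched to form an $i$-set, each $i$-set is turned into a cycle in $(i-1)!$ ways, and one divides by $a!$ since the $a$ cycles of that type are unordered; this gives $(ja)!((i-j)a)!((i-1)!)^a\big/\big((j!)^a((i-j)!)^a a!\big)$. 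Multiplying all contributions and cancelling the $(ja)!$ and $((i-j)a)!$ against the multinomial denominators leaves $n!m!\sum_X\frac{A_{a_1,\leq k}A_{a_2,\leq k}}{a_1!a_2!}\prod_{i,j}\frac{((i-1)!)^{a_{f(i,j)}}}{(j!)^{a_{f(i,j)}}((i-j)!)^{a_{f(i,j)}}\,a_{f(i,j)}!}$, and the elementary identity $\frac{(i-1)!}{j!\,(i-j)!}=\frac1i\binom{i}{j}$ turns the inner product into $\binom{i}{j}^{a_{f(i,j)}}\frac1{i^{a_{f(i,j)}}a_{f(i,j)}!}$, which is the claimed formula.

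The hard part will be purely bookkeeping, not conceptual: one must insert the cyclic-arrangement factor $(i-1)!$ exactly once per mixed cycle --- this being the only place the permutation count differs from the set-partition count of Theorem~\ref{teo1ag} --- and make sure the quotients by $a_1!$, $a_2!$ and each $a_{f(i,j)}!$ (unordered pure pieces and unordered same-type mixed cycles) are not already absorbed into the multinomials used to distribute labels. The safe order of operations is: first distribute the labels of each side among the structural pieces (multinomials), then build $\sigma_1,\sigma_2$ via $A_{\cdot,\leq k}$, then build each mixed type from its designated labels, and only at the very end quotient by the symmetries among cycles of equal type. With that convention fixed the remaining manipulation is routine; as a check on small cases one can compare with the generating function $\sum_n A_{n,\leq k}\,x^n/n!=\exp\!\big(x+\tfrac12x^2+\cdots+\tfrac1k x^k\big)$ from Theorem~\ref{teo52}.
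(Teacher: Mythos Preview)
Your proposal is correct and is exactly the approach the paper intends: the paper states Theorem~\ref{spiveyres} without proof, relying on the analogy with Theorem~\ref{teo1ag}, and you have supplied precisely that analogy---replacing each mixed block of type $(i,j)$ by a mixed cycle and inserting the cyclic-arrangement factor $(i-1)!$, then rewriting $\dfrac{(i-1)!}{j!\,(i-j)!}=\dfrac{1}{i}\binom{i}{j}$. Your bookkeeping (multinomial distribution of labels, then quotient by $a_{f(i,j)}!$ for unordered cycles of the same type) is sound and matches the set-partition computation in the proof of Theorem~\ref{teo1ag} line for line.
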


\begin{example}
The special case $k=3$ gives
$$A_{n+m, \leq 3} = \sum _{i=0}^n\sum _{j=0}^m\sum _{\overset{l=0}{{l\equiv -n-m+i+j\bmod 3}} }^{\min \{n-i,m-j\}}\frac{n!m!A_{i,\leq 3}A_{j,\leq 3}}{i!j!l! \left( \frac{2m-n+i-2j-l}{3} \right)! \, \left(\frac{2n-m-2i+j-l}{3}\right)!}.$$
\end{example}

\subsection{The Associated Factorial Numbers  $A_{n, \geq m} $}

This section presents analogous results for sequence built from the \textit{associated  Stirling numbers of the first kind}
  $ {n \brack k}_{\geq m }$. These numbers satisfy the following recurrence \cite{komatsu-2015b}
 \begin{align*}
{n+1 \brack k}_{\geq m}  =\sum_{j=m-1}^{n} \frac{n!}{(n-j)!}
 {n-j \brack k-1}_{\geq m} = n {n \brack k}_{\geq m}  +   \frac{n!}{(n-m+1)!} {n-m + 1 \brack k-1}_{\geq m},
 \end{align*}
     \noindent
with the initial conditions
$
 {0 \brack 0}_{\geq m}   = 1
      \text{ and }
      {n\brack 0}_{\geq m}  = 0.
   $
The \textit{associated   factorial numbers} defined by
 \begin{equation*}
 A_{n, \geq m} = \sum_{k=0}^{n}  {n \brack k}_{\geq m },
 \end{equation*}
 enumerate all permutations  of $n$ elements into cycles with the condition that every cycle has at least than $m$ items. Its
 generating function is given by \cite{wilf-1990a}
  \begin{equation*}
 \sum_{n=0}^{\infty} A_{n, \geq m} \, \frac{x^{n} }{n!} = \exp\left(\sum_{n=m}^{\infty}\frac{x^n}{n}\right)=\exp\left(\log\frac{1}{1-x}-\sum_{n=1}^{m-1}\frac{x^n}{n}\right).
 \end{equation*}
 In particular, if $m=2$  we obtain the number of permutations of $n$ elements with no fixed points, the classical
 derangements numbers. This sequence satisfies that (cf. \cite{bona-2012a})
 \begin{align}
 A_{n, \geq 2} &= nA_{n-1, \geq 2} + (-1)^{n}, \quad  n\geq 1, \label{bonab}\\
 &=(n-1)(A_{n-1,\geq2}+A_{n-2,\geq 2}). \label{bonab2}
 \end{align}
 \noindent
Radoux \cite{radoux-1991a} has shown that the  Hankel transform of the associated factorial numbers $A_{n,\geq 2}$ is given by
$\begin{displaystyle}\prod_{i=1}^{n}i!^2.\end{displaystyle}$

\smallskip

The following theorem is the analog  of Theorem \ref{asbell}, with a similar proof. The details are omitted.

\begin{theorem}
For $n, \, k  \in \mathbb{N}$ with  $k > 1$, the associated  factorial numbers $A_{n,\geq k}$ satisfy
$$A_{n,\geq k}=A_{n,\geq k-1}-\sum _{i=1}^{\lfloor \frac{n}{k-1}\rfloor}\frac{n!}{(k-1)^i(n-(k-1)i)!i!}A_{n-(k-1)i,\geq k}.$$
\end{theorem}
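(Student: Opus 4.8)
The plan is to mirror the combinatorial argument used for Theorem~\ref{asbell}, replacing set partitions by permutations decomposed into cycles and ``blocks of a given size'' by ``cycles of a given length.'' First I would set up the two nested classes: let $\mathcal{A}_{n,\geq k}$ denote the set of permutations of $[n]$ all of whose cycles have length at least $k$, and note the trivial inclusion $\mathcal{A}_{n,\geq k}\subseteq \mathcal{A}_{n,\geq k-1}$, since any permutation with no cycle shorter than $k$ certainly has no cycle shorter than $k-1$. The difference $A_{n,\geq k-1}-A_{n,\geq k}$ therefore counts those permutations in $\mathcal{A}_{n,\geq k-1}$ that have at least one cycle of length exactly $k-1$ (this is the only ``forbidden'' length that $\mathcal{A}_{n,\geq k-1}$ permits but $\mathcal{A}_{n,\geq k}$ does not).

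Next I would stratify that difference by the number of cycles of length exactly $k-1$. For $i\geq 1$ let $A_i$ be the set of permutations in $\mathcal{A}_{n,\geq k-1}$ having precisely $i$ cycles of length $k-1$; then $\{A_i\}_{i\geq 1}$ partitions $\mathcal{A}_{n,\geq k-1}\setminus\mathcal{A}_{n,\geq k}$, and $i$ ranges up to $\lfloor n/(k-1)\rfloor$. To count $|A_i|$, I would: choose the $i(k-1)$ elements that lie on the short cycles and distribute them into $i$ labelled $(k-1)$-cycles in $\frac{n!}{(n-i(k-1))!}\cdot\frac{1}{(k-1)^i}$ ways (the factor $(k-1)^i$ accounting for the cyclic rotation of each $(k-1)$-cycle, exactly as in the standard formula ${m\brack 1}=(m-1)!$), then divide by $i!$ because the $i$ short cycles are unordered among themselves, and finally arrange the remaining $n-i(k-1)$ elements into cycles all of length $\geq k$ in $A_{n-i(k-1),\geq k}$ ways. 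This yields
\begin{equation*}
|A_i|=\frac{n!}{(k-1)^i\,(n-(k-1)i)!\,i!}\,A_{n-(k-1)i,\geq k},
\end{equation*}
and summing over $i$ from $1$ to $\lfloor n/(k-1)\rfloor$ gives the claimed identity after rearranging $A_{n,\geq k}=A_{n,\geq k-1}-\sum_i |A_i|$. The hypothesis $k>1$ is needed so that $k-1\geq 1$ and the short cycles are genuine cycles.

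The main subtlety—rather than a real obstacle—is getting the enumeration of $A_i$ exactly right: one must be careful that the elements on the $i$ short cycles of length $k-1$ form a set of size $i(k-1)$ chosen \emph{with} the internal cyclic structure but \emph{without} ordering the $i$ cycles, which is precisely what the factor $\frac{n!}{(k-1)^i(n-(k-1)i)!\,i!}$ encodes (compare the analogous set-partition count $\frac{1}{i!}\binom{n}{k-1,\dots,k-1}$ in Theorem~\ref{asbell}, with $(k-1)!$ per block replaced by $(k-1)$ per cycle since a $(k-1)$-set admits $(k-2)!$ cyclic orders, i.e.\ $(k-1)!/(k-1)$). Once that count is verified, the remainder is a one-line inclusion argument. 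Since the proof is a verbatim translation of the one for Theorem~\ref{asbell}, the paper's choice to omit the details is justified.
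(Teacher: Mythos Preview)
Your proposal is correct and follows exactly the approach the paper intends: the paper explicitly states that the proof is analogous to that of Theorem~\ref{asbell} and omits the details, and you have supplied precisely that translation from set partitions to permutations, with the correct cycle count $\frac{n!}{(k-1)^i(n-(k-1)i)!\,i!}$ in place of the multinomial block count. There is nothing to add.
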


The following result corresponds to  Theorem \ref{spiveyres}.

\begin{theorem}
Denote $f(i,j)=2+j+\binom{i-1}{2}$, then
 $$A_{n+m,\geq k}=n!m!\sum_{X}\frac{A_{a_1,\geq k}A_{a_2\geq k}}{a_1!a_2!}\prod _{i=k}^{n+m}\prod _{j=1}^{i-1}\binom{i}{j}^{a_{f(i,j)}}\frac{1}{i^{a_{f(i,j)}}\cdot a_{f(i,j)}!},$$
where $X$ stands for the following set of variables $$X=\{(a_1,a_2,\dots ,a_{1+k+\binom{k-1}{2}}): a_1+\sum _{i=k}^{n+m}\sum _{j=1}^{i-1}ja_{f(i,j)}=n \wedge a_2+\sum _{i=k}^{n+m}\sum _{j=1}^{i-1}(i-j)a_{f(i,j)}=m \}.$$
 \end{theorem}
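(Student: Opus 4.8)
The plan is to mimic the combinatorial argument used for Theorem~\ref{teo1ag} (and its associated-Bell analog), adapting the bookkeeping from set partitions to permutations. First I would split the ground set of $n+m$ elements into $I_1=[n]$ and $I_2=[n+m]\setminus[n]$, and write any permutation $\sigma$ counted by $A_{n+m,\geq k}$ uniquely as $\sigma=\sigma_1\cup\sigma_2\cup\sigma_3$, where $\sigma_1$ is a permutation of a subset of $I_1$ using only cycles contained in $I_1$, $\sigma_2$ likewise for $I_2$, and $\sigma_3$ consists of the ``mixed'' cycles, each containing at least one element of each of $I_1,I_2$. Letting $a_1$ be the number of elements of $I_1$ consumed by $\sigma_1$ and $a_2$ the number of elements of $I_2$ consumed by $\sigma_2$, those two pieces contribute $A_{a_1,\geq k}A_{a_2,\geq k}$, since they are honest permutations on their respective subsets with every cycle of length $\geq k$.

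Next I would classify the mixed cycles by ``type'': a cycle of total length $i$ that uses exactly $j$ elements of $I_1$ and $i-j$ elements of $I_2$ (with $1\le j\le i-1$ and, because of the associated restriction, $i\ge k$). Let $a_{f(i,j)}$ count the cycles of this type, where $f(i,j)=2+j+\binom{i-1}{2}$ is just the indexing convention already fixed in the excerpt. The key enumerative step is to count the number of ways to build the $a_{f(i,j)}$ cycles of a fixed type: choose which $ja_{f(i,j)}$ elements of $[n]$ and which $(i-j)a_{f(i,j)}$ elements of $[n+m]\setminus[n]$ are used, distribute them into $a_{f(i,j)}$ groups of sizes $j$ and $i-j$ respectively, divide by $a_{f(i,j)}!$ for the unordered collection of cycles, and then for each group of $i$ chosen elements count the number of cyclic arrangements. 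The standard fact is that on $i$ labelled points there are $(i-1)!$ distinct cycles; combined with the group-formation multinomials this should, after the global $n!\,m!$ normalization is factored out, collapse exactly to the factor $\binom{i}{j}^{a_{f(i,j)}}\frac{1}{i^{a_{f(i,j)}}a_{f(i,j)}!}$ appearing in the statement (using $(i-1)!=i!/i$ and the identity $\binom{i}{j}\frac{1}{i}=\frac{(i-1)!}{j!(i-j)!}$). Summing over all admissible configurations, i.e.\ over the set $X$ defined by the two balance conditions $a_1+\sum_{i,j}ja_{f(i,j)}=n$ and $a_2+\sum_{i,j}(i-j)a_{f(i,j)}=m$, then yields the claimed formula; the outer index $i$ runs only up to $n+m$ since no cycle can be longer than that.

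The main obstacle, as in the Bell-number analog, is purely the bookkeeping: verifying that the product of multinomial choices, the $(i-1)!$ cyclic-arrangement factor, and the $1/a_{f(i,j)}!$ symmetry factor assemble \emph{exactly} into $\binom{i}{j}^{a_{f(i,j)}} i^{-a_{f(i,j)}}/a_{f(i,j)}!$ once $n!m!$ is pulled out front, and that the indexing $f(i,j)$ really does make the two linear constraints defining $X$ consistent with ``$i\ge k$ for mixed cycles'' (so that the associated restriction is respected only by $\sigma_1,\sigma_2,\sigma_3$ separately). Since this is a routine but lengthy verification entirely parallel to the proof of Theorem~\ref{spiveyres} for the restricted case and to Theorem~\ref{teo1ag} for the associated Bell case, I would state the classification, give the count of one type of mixed cycle, note the substitution $(i-1)!=i!/i$, and then simply say that summing over the configuration set $X$ gives the result, omitting the remaining algebraic simplification.
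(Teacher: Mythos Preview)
Your proposal is correct and follows exactly the approach the paper intends: the paper does not spell out a proof for this theorem but simply notes that it ``corresponds to Theorem~\ref{spiveyres},'' which in turn is the permutation analog of the fully proved Theorem~\ref{teo1ag}. Your decomposition $\sigma=\sigma_1\cup\sigma_2\cup\sigma_3$, the typing of mixed cycles by $(i,j)$ with $i\ge k$, and the count using $(i-1)!$ cyclic arrangements together with the simplification $(i-1)!/(j!(i-j)!)=\binom{i}{j}/i$ is precisely the adaptation the paper has in mind.
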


The next statement generalizes \eqref{bonab2}.

\begin{theorem}\label{genAigner}
The associated  factorial numbers $A_{n,\geq k}$ satisfy
$$A_{n,\geq k}=(n-1)A_{n-1,\geq k}+ (n-1)^{\underline{k-1}}A_{n-k,\geq k}, \quad n\geq 1,$$
where $n^{\underline{k}}:=n(n-1)\cdots (n-(k-1)) = \begin{displaystyle} \frac{n!}{(n-k)!} \end{displaystyle}$ and $n^{\underline{0}}=1$.
\end{theorem}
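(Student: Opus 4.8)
The plan is to imitate the classical argument that produces the derangement recurrence \eqref{bonab2}, working directly with permutations rather than generating functions. Fix $n \geq 1$ and consider a permutation $\sigma$ of $[n]$ in which every cycle has length at least $k$; these are counted by $A_{n,\geq k}$. I would classify such $\sigma$ according to the length of the cycle containing the element $n$ (say), which must be $k$ or more. The key dichotomy is whether that cycle has length exactly $k$ or strictly more than $k$.

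First I would handle the case where the cycle of $n$ has length $> k$. Delete $n$ from its cycle: this produces a permutation of $[n-1]$ all of whose cycles still have length $\geq k$ (the cycle that contained $n$ shrinks by one but stays of length $\geq k$), so it is counted by $A_{n-1,\geq k}$. Conversely, given any such permutation of $[n-1]$, the element $n$ can be inserted into any cycle in exactly (length of that cycle) positions, but a cycle of length $\geq k$ is never forbidden — so every one of the $n-1$ "slots" is allowed, giving a factor $n-1$. This yields the $(n-1)A_{n-1,\geq k}$ term. Second, for the case where the cycle of $n$ has length exactly $k$: one must choose the other $k-1$ elements of this cycle and arrange them in a cycle with $n$, which can be done in $\binom{n-1}{k-1}(k-1)! = (n-1)^{\underline{k-1}}$ ways, and then the remaining $n-k$ elements form a permutation with all cycles of length $\geq k$, counted by $A_{n-k,\geq k}$. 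Summing the two cases gives the claimed recurrence.

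The main obstacle, and the point that needs care, is the insertion/deletion bijection in the first case: one must verify that when the cycle of $n$ has length $>k$, deleting $n$ never violates the "$\geq k$" constraint, and that the reinsertion count is exactly $n-1$ (not fewer) precisely because the associated condition forbids short cycles rather than long ones. In other words, the "forbidden insertions" that appear in the analogous argument for \emph{restricted} numbers $A_{n,\leq m}$ simply do not occur here — inserting an element into a cycle only lengthens it — so no subtraction is needed, and the coefficient of $A_{n-1,\geq k}$ is the full $n-1$. I would also double-check the edge cases $1 \leq n < k$ (where $A_{n-k,\geq k}$ should be interpreted as $0$, or the second term is vacuous since no cycle of length $k$ can fit) and $n = k$ (where $A_{0,\geq k} = 1$), confirming that the recurrence reproduces the correct small values. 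Once the bijection is stated cleanly, the identity $\binom{n-1}{k-1}(k-1)! = (n-1)^{\underline{k-1}}$ is immediate and the proof is complete.
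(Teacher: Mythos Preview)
Your proposal is correct and follows essentially the same approach as the paper: both arguments split on whether the cycle containing $n$ has length exactly $k$ or strictly greater, giving the terms $(n-1)^{\underline{k-1}}A_{n-k,\geq k}$ and $(n-1)A_{n-1,\geq k}$ respectively via the same insertion/deletion bijection. Your write-up is in fact a bit more explicit about why the insertion count is exactly $n-1$ and about the small-$n$ edge cases, but the underlying combinatorial decomposition is identical.
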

\begin{proof}
Denote by $\mathcal{A}_{n,\geq k}$ the permutations $\sigma$  on $n$ elements such that the length of every cycle in
$\sigma$  is not less than $k$ (i.e.,
$\mathcal{A}_{n,\geq k}=\{\sigma \in \mathcal{S}_n:| \langle i \rangle|\geq k\}$. Here
$\langle i \rangle$ denotes the cycle of $i\in [n]$ as a set).  For $\sigma \in \mathcal{A}_{n,\geq k}$, there are two
cases for $n\in[n]$:
\begin{itemize}
\item \textit{Case 1}: here
$|\langle n \rangle |=k$. It is required to construct a cycle of length $k$ containing $n$. In order to do this, one
must choose $k-1$ numbers from $[n-1]$ and place them in the same cycle. This can be done in $\binom{n-1}{k-1}$ ways and the
total number of
possible cycles is $\binom{n-1}{k-1}(k-1)!=(n-1)^{\underline{k-1}}$. The other cycles are counted by $A_{n-k,\geq k}$, for a total
of $(n-1)^{\underline{k-1}}A_{n-k,\geq k}$.

\item \textit{Case 2}: now $| \langle n \rangle|>k$. Then one needs to place $n$ in any
cycle of a permutation $\sigma ' \in \mathcal{A}_{n-1,\geq k}$. There are $(n-1)A_{n-1,\geq k}$ ways to do it.
\end{itemize}

\noindent
The identity follows from this discussion.

\end{proof}

\section{Log-Convex  and Log-Concavity Properties}
\label{sec-logconvex}

A sequence $(a_n)_{n\geq0}$ of nonnegative real numbers is called \textit{log-concave}
if $a_na_{n+2}\leq a_{n+1}^2$, for all $n\geq 0$. It is called \textit{log-convex} if $a_na_{n+2}\geq a_{n+1}^2$ for all $n\geq 0$. There
is a large collection of results on log-concavity and log-convexity and its relation to combinatorial sequences. Some of these appear in
\cite{brenti-1989a}, \cite{mcnamara-2010a}, \cite{medinal-2016a}, \cite{sagan-1998a} and \cite{wilf-1990a}. The Bell sequence is
  log-convex \cite{asai-2000a} and it is not difficult to verify that the same is true for restricted Bell numbers
and restricted factorial numbers.

A sequence $(a_n)_{n\in\N}$ has no internal zeros if there do not exist integers $0\leq i<j<k$ such that $a_i\neq 0, a_j=0, a_k\neq0$.

\begin{theorem}[Bender-Canfield Theorem, \cite{bender-1996a}]\label{BCT}
Let $\{ 1, w_1, w_2,\dots \}$ be a log-concave sequence of  nonnegative real numbers
with no internal zeros. Define the sequence $(a_n)_{n\geq0}$  by
$$\sum_{n=0}^\infty \frac{a_n}{n!}x^n=\exp\left(\sum_{j=1}^\infty \frac{w_i}{i}x^j\right). $$
Then the sequence $(a_n)_{n\geq0}$ is log-convex and the sequence $(a_n/n!)_{n\geq0}$  is log-convave.
\end{theorem}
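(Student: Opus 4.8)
The plan is to turn the exponential generating function identity into a linear recurrence, recast both conclusions as a single two-sided estimate for the consecutive ratios of the sequence $(a_n/n!)$, and then run one interleaved induction.

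Write $c_n=a_n/n!$ and $w_0:=1$, so that $F(x):=\sum_{n\ge0}c_nx^n=\exp\left(\sum_{j\ge1}\frac{w_j}{j}x^j\right)$. Logarithmic differentiation gives $xF'(x)=\left(\sum_{j\ge1}w_jx^j\right)F(x)$, i.e.
\[
n\,c_n=\sum_{j=1}^{n}w_j\,c_{n-j}\qquad(n\ge1),\qquad c_0=1.
\]
If $w_1=0$, the no-internal-zeros hypothesis forces $w_j=0$ for all $j\ge1$, so $F\equiv1$ and the theorem is trivial; otherwise the recurrence gives $c_n>0$ for all $n$ by induction, so the ratios $t_n:=c_n/c_{n-1}>0$ are well defined. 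Now log-concavity of $(c_n)$ reads exactly $t_{n+1}\le t_n$, and log-convexity of $(a_n)=(n!\,c_n)$ reads exactly $(n+1)t_{n+1}\ge n\,t_n$; together they say
\[
\frac{n}{n+1}\ \le\ \frac{t_{n+1}}{t_n}\ \le\ 1\qquad(n\ge1).
\]
So it suffices to prove, for all $n\ge1$ and by induction, the two inequalities $(n+1)t_{n+1}\ge n\,t_n$ and $t_{n+1}\le t_n$ (call them $Q(n)$), assuming $Q(1),\dots,Q(n-1)$. That inductive hypothesis says precisely that $(t_m)_{m\le n}$ is nonincreasing and that $t_m/t_{m+1}\le(m+1)/m$ for $1\le m\le n-1$.

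The first inequality of $Q(n)$ is the easy one and uses only $w_j\ge0$. Dividing the recurrence by $c_{n-1}$ and by $c_n$ respectively gives
\[
n\,t_n=\sum_{j\ge1}\frac{w_j}{t_{n-1}t_{n-2}\cdots t_{n-j+1}},\qquad
(n+1)\,t_{n+1}=\sum_{j\ge1}\frac{w_j}{t_{n}t_{n-1}\cdots t_{n-j+2}},
\]
with the convention that the $j=1$ term is $w_1$. Because $(t_m)_{m\le n}$ is nonincreasing, each denominator in the second sum is at most the corresponding denominator in the first; comparing term by term, and keeping the extra nonnegative $j=n+1$ term of $(n+1)t_{n+1}$, yields $(n+1)t_{n+1}\ge n\,t_n$.

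The second inequality of $Q(n)$, $t_{n+1}\le t_n$, i.e. $c_{n-1}c_{n+1}\le c_n^2$, is where log-concavity of $(w_j)$ is indispensable --- without it $(c_n)$ need not be log-concave, as small examples show. Using the recurrence once more,
\[
(n+1)\,c_{n-1}c_{n+1}=\sum_{j\ge1}w_j\,c_{n-1}c_{n+1-j},\qquad
n\,c_n^2=\sum_{i\ge1}w_i\,c_nc_{n-i},
\]
so it is enough to show $\frac1{n+1}\sum_{j}w_j\,c_{n-1}c_{n+1-j}\le\frac1n\sum_{i}w_i\,c_nc_{n-i}$. A naive termwise comparison would require $\dfrac{c_{n-1}c_{n+1-j}}{c_nc_{n-j}}=\dfrac{t_{n+1-j}}{t_n}\le\dfrac{n+1}{n}$, but the inductive bounds give only $t_{n+1-j}/t_n\le n/(n+1-j)$, which is adequate for $j=1$ and false for $j\ge2$. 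The remedy is to regroup the two double sums into matched pairs: applying the log-concavity inequality $w_iw_j\ge w_{i-1}w_{j+1}$ for $1\le i\le j$ --- valid for $(w_j)_{j\ge0}$ with $w_0=1$, the no-internal-zeros hypothesis disposing of the vanishing cases --- one trades an unbalanced pair $(i,j)$ for the more balanced $(i-1,j+1)$, while the attendant loss in the accompanying $c$-factors is absorbed using the bounds $m/(m+1)\le t_{m+1}/t_m\le1$ from the inductive hypothesis. Setting up this matching correctly, and checking that no more is spent on the $c$-factors than is gained from the log-concavity of $(w_j)$, is the technical heart of the argument and the step I expect to be the main obstacle; it is essentially the bookkeeping carried out by Bender and Canfield. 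Once it is in place the induction closes; finally, in the non-trivial case all $c_n$ and $a_n$ are positive, so $(a_n/n!)$ and $(a_n)$ have no internal zeros, and both claimed conclusions follow.
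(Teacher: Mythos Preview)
The paper does not prove this theorem; it is quoted verbatim from Bender--Canfield \cite{bender-1996a} and used as a black box in the two theorems that follow. So there is no ``paper's own proof'' to compare against.

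That said, your proposal is not a complete proof either. The reduction to the recurrence $nc_n=\sum_{j\ge1}w_jc_{n-j}$, the reformulation as the two-sided bound $\frac{n}{n+1}\le\frac{t_{n+1}}{t_n}\le1$, and the termwise comparison giving $(n+1)t_{n+1}\ge nt_n$ are all correct and follow the Bender--Canfield strategy. But the second inequality $t_{n+1}\le t_n$ is precisely where the log-concavity of $(w_j)$ enters, and you yourself flag that ``setting up this matching correctly \dots\ is the technical heart of the argument and the step I expect to be the main obstacle.'' What you have written there is a description of the shape of the argument (trade $w_iw_j$ for $w_{i-1}w_{j+1}$, absorb the loss in the $c$-factors using the inductive ratio bounds), not the argument itself. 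The actual Bender--Canfield proof does carry out this bookkeeping explicitly, and it is not entirely routine: one must organize the double sum $\sum_{i,j}w_iw_jc_{n-i}c_{n-j}$ and verify the matched-pair inequalities hold uniformly. As written, your proposal defers exactly the step that contains all of the content, so it is a correct outline rather than a proof.
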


This result is now used to verify the log-convexity of $B_{n, \leq m}$.

\begin{theorem}
The restricted Bell sequence $(B_{n,\leq m})_{n\geq 0}$ is log-convex and the sequence \linebreak $(B_{n,\leq m}/n!)_{n\geq 0}$ is log-concave.
\end{theorem}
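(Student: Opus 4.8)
The plan is to obtain this as a direct consequence of the Bender--Canfield Theorem (Theorem~\ref{BCT}) applied to the exponential generating function of $B_{n,\leq m}$ established in Theorem~\ref{gfun}:
$$\sum_{n=0}^{\infty}\frac{B_{n,\leq m}\,x^n}{n!}=\exp\!\left(\sum_{i=1}^{m}\frac{x^i}{i!}\right).$$
To match the form $\exp\!\big(\sum_{j\geq 1}\tfrac{w_j}{j}x^j\big)$ appearing in Theorem~\ref{BCT}, I would take $w_j=\dfrac{j}{j!}=\dfrac{1}{(j-1)!}$ for $1\leq j\leq m$ and $w_j=0$ for $j>m$, so that the sequence to be tested is
$$\{1,w_1,w_2,\dots\}=\Big\{1,\tfrac{1}{0!},\tfrac{1}{1!},\dots,\tfrac{1}{(m-1)!},0,0,\dots\Big\}.$$

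Then I would verify the two hypotheses of Theorem~\ref{BCT} for this sequence. It has no internal zeros, since its nonzero entries occupy exactly the consecutive positions $0,1,\dots,m$, followed only by zeros. For log-concavity, write $a_0=1$ and $a_k=\tfrac{1}{(k-1)!}$ for $1\leq k\leq m$, and $a_k=0$ for $k>m$. At $n=0$ one has $a_0a_2=1=a_1^2$; for $1\leq n\leq m-2$ the inequality $a_na_{n+2}\leq a_{n+1}^2$ is equivalent to $(n!)^2\leq (n-1)!\,(n+1)!$, that is $n\leq n+1$, which holds; and for $n\geq m-1$ one of $a_{n+1},a_{n+2}$ vanishes, so the inequality is automatic. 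Hence $\{1,w_1,w_2,\dots\}$ is a log-concave sequence of nonnegative reals with no internal zeros, and Theorem~\ref{BCT} gives at once that $(B_{n,\leq m})_{n\geq 0}$ is log-convex and $(B_{n,\leq m}/n!)_{n\geq 0}$ is log-concave. The same argument, applied to the generating function in Theorem~\ref{teo52}, yields the corresponding statement for $A_{n,\leq m}$.

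There is no serious obstacle; the argument is essentially bookkeeping once the weight sequence is identified. The only place that calls for a little care is the index $j=m$, where the $w_j$ turn into zeros: both the absence of internal zeros and the log-concavity inequality must be checked across this transition, whereas on the purely positive part of the sequence log-concavity reduces to the elementary fact $(n!)^2\leq (n-1)!\,(n+1)!$.
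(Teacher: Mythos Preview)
Your proof is correct and follows exactly the paper's approach: apply the Bender--Canfield Theorem to the generating function from Theorem~\ref{gfun} with the weight sequence $w_i=\tfrac{1}{(i-1)!}$ for $1\le i\le m$ and $w_i=0$ otherwise. You simply spell out the verification of log-concavity and the absence of internal zeros that the paper leaves implicit.
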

\begin{proof}
The result follows from Theorems \ref{gfun} and \ref{BCT} and the log-concavity of the sequence
$$w_i=\begin{cases}
\frac{1}{(i-1)!},& \ \text{if} \ 1\leq i \leq m;\\
0,& i>m. \end{cases}$$
\end{proof}

The next statement is similar.

\begin{theorem}
The restricted factorial sequence $(A_{n,\leq m})_{n\geq 0}$ is log-convex and the sequence $(A_{n,\leq m}/n!)_{n\geq 0}$ is log-concave.
\end{theorem}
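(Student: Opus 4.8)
The plan is to invoke the Bender--Canfield Theorem (Theorem~\ref{BCT}) in exactly the same way as in the proof for $B_{n,\leq m}$, now feeding it the exponential generating function produced by Theorem~\ref{teo52}. First I would recall that
$$\sum_{n=0}^{\infty} \frac{A_{n,\leq m}}{n!}\,x^{n} = \exp\!\left(x + \tfrac12 x^{2} + \cdots + \tfrac1m x^{m}\right) = \exp\!\left(\sum_{i=1}^{m} \frac{x^{i}}{i}\right),$$
and observe that this is already in the form $\exp\!\left(\sum_{j\geq 1}\frac{w_{j}}{j}x^{j}\right)$ demanded by Theorem~\ref{BCT}, with
$$w_{i}=\begin{cases} 1, & 1\leq i\leq m;\\ 0, & i>m.\end{cases}$$

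Next I would verify the two hypotheses on the weight sequence $\{1, w_{1}, w_{2}, \dots\}$. For this choice the sequence is $\{1,1,\dots,1,0,0,\dots\}$, namely $m+1$ ones followed by zeros; it clearly has no internal zeros, and it is log-concave since $w_{i-1}w_{i+1}\leq w_{i}^{2}$ reads $1\leq 1$ inside the block of ones, $0\leq 1$ at the single step from $1$ to $0$, and $0\leq 0$ thereafter. With both hypotheses in hand, Theorem~\ref{BCT} yields immediately that $(A_{n,\leq m})_{n\geq 0}$ is log-convex and $(A_{n,\leq m}/n!)_{n\geq 0}$ is log-concave, which is the assertion.

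I do not expect any genuine obstacle here: the whole argument is a direct application, and the only point that needs a line of care is the cosmetic rewriting of the exponent in Theorem~\ref{teo52} as $\sum_{j}\tfrac{w_{j}}{j}x^{j}$ rather than $\sum_{j}\tfrac{x^{j}}{j}$, after which the log-concavity check for the finitely supported all-ones weight sequence is trivial and Theorem~\ref{BCT} applies verbatim. (One could alternatively mimic the statement of the preceding theorem and just write ``the proof is similar'', but spelling out the weights $w_{i}$ makes the parallel with the $B_{n,\leq m}$ case transparent.)
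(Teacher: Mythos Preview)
Your proposal is correct and matches the paper's own proof essentially verbatim: the paper invokes Theorems~\ref{teo52} and~\ref{BCT} with precisely the weight sequence $w_i=1$ for $1\le i\le m$ and $w_i=0$ for $i>m$. Your version is in fact slightly more detailed, since you explicitly check the log-concavity and no-internal-zeros hypotheses that the paper leaves implicit.
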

\begin{proof}
Now use Theorems \ref{teo52} and \ref{BCT} and the sequence
$$w_i=\begin{cases}
1,& \ \text{if} \ 1\leq i \leq m;\\
0,& i>m. \end{cases}$$
to produce the result.
\end{proof}

\subsection{Open questions}
Some conjectured statements are collected here.
The restricted Bell polynomials are defined by $$B_{n,\leq m}(x):=\sum_{k=0}^n{n \brace k}_{\leq m} x^k.$$

The recurrence (\ref{recur-1}), produces
 $$B_{n+1,\leq m}(x)=xB_{n,\leq m}(x)+xB'_{n,\leq m}(x)-\binom{n}{m}xB_{n-m,\leq m}(x). \quad $$
This can be verified directly:
 \begin{align*}
 B_{n+1,\leq m}(x)&=x\sum_{k=0}^{n}  k{n \brace k}_{\leq m}x^{k-1}  + x\sum_{k=0}^{n} {n \brace k}_{\leq m}x^{k}  -    \binom{n}{m} x\sum_{k=0}^{n-m} {n-m \brace k}_{\leq m} x^{k} \\
  &= xB'_{n,\leq m}(x) + xB_{n,\leq m}(x)-\binom{n}{m}xB_{n-m,\leq m}(x).
 \end{align*}
 \noindent
 The authors have tried, without success, to establish  the next two statements:

\begin{conjecture}
\label{conj1}
The roots of the polynomial $B_{n,\leq m}(x)$ are real and non-positive if $m \neq 3, 4$.
\end{conjecture}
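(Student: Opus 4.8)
The plan is to exploit the exponential generating function from Theorem~\ref{gfun}, namely $\sum_{n\geq0}B_{n,\leq m}(1)x^n/n!=\exp(\sum_{i=1}^m x^i/i!)$, but in the \emph{weighted} form that tracks the number of blocks. Writing $F_m(x,t)=\exp\bigl(t\sum_{i=1}^m x^i/i!\bigr)=\sum_{n\geq0}B_{n,\leq m}(t)x^n/n!$, the polynomial $B_{n,\leq m}(t)$ is (up to the factor $n!$) the $n$-th Taylor coefficient of $F_m$. To prove that $B_{n,\leq m}(t)$ has only real, non-positive roots it suffices, by the classical theory of multiplier sequences and P\'olya--Schur theory, to understand when the operator $t\mapsto t$ applied through $\exp(t\cdot p(x))$ preserves reality of roots; concretely, I would first handle the cases $m=1,2$ directly ($B_{n,\leq1}(t)=t^n$, and for $m=2$ the polynomials are the Hermite-type involution polynomials, long known to be real-rooted), and then attack general $m\neq3,4$ via one of the following three routes.

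\textbf{Route A (recurrence + interlacing).} Use the polynomial recurrence already displayed in the excerpt,
\[
B_{n+1,\leq m}(t)=t\,B_{n,\leq m}(t)+t\,B_{n,\leq m}'(t)-\binom{n}{m}t\,B_{n-m,\leq m}(t),
\]
and try to prove by induction that the zeros of $B_{n,\leq m}$ are real, non-positive, and interlace appropriately with those of $B_{n-1,\leq m}$ (and of the lower term $B_{n-m,\leq m}$). The operator $t(1+\frac{d}{dt})$ is a real-rootedness preserver (it is $t e^{-t}\frac{d}{dt}e^{t}(\cdot)$ up to normalization, or directly: if $q$ has only real non-positive roots then so does $tq+tq'$, by Rolle between consecutive roots plus a sign count at $0$ and $-\infty$). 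The genuine difficulty is the \emph{negative} perturbation $-\binom{n}{m}t\,B_{n-m,\leq m}(t)$: subtracting a real-rooted polynomial from a real-rooted one generally destroys real-rootedness unless the two interlace with the correct sign pattern. So the heart of Route A is an interlacing lemma asserting that $B_{n-m,\leq m}$ interlaces $(1+\frac{d}{dt})B_{n,\leq m}$ with the right orientation, and that the coefficient $\binom{n}{m}$ is small enough not to push a pair of real roots into the complex plane. I expect this interlacing claim to be the main obstacle, and I suspect it is exactly where the exceptional cases $m=3,4$ intervene: for small $m$ the subtracted term is comparatively large relative to the spread of the roots, and a complex pair can emerge.

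\textbf{Route B (generating-function / P\'olya frequency).} Alternatively, observe that $B_{n,\leq m}(t)/n!$ is, via the exponential formula, the sum over set partitions of $t^{\#\text{blocks}}$ with each block of size $\leq m$; equivalently $B_{n,\leq m}(t)$ counts something like weighted matchings/hypergraph covers. If one can exhibit these polynomials as (specializations of) the independence polynomial of a claw-free graph, or as the partition function of a determinantal/Pfaffian point process, real-rootedness follows from the Chudnovsky--Seymour theorem or from a Heilmann--Lieb-type argument. For $m=2$ this is precisely the matching polynomial of the complete graph $K_n$, which is Heilmann--Lieb real-rooted; the task is to find the correct generalization for $m\geq5$ and to see structurally why $m=3,4$ fail. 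The obstacle here is conceptual: identifying the right combinatorial model, and it is not clear such a model exists uniformly.

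\textbf{Route C (total positivity of the coefficient matrix).} One could try to prove that the infinite lower-triangular matrix $\bigl({n\brace k}_{\leq m}\bigr)_{n,k\geq0}$ is totally positive (all minors nonnegative); by a theorem going back to Aissen--Schoenberg--Whitney / Brenti, total positivity of the coefficient array of a polynomial sequence (together with the right combinatorial structure) forces each row polynomial to be real-rooted. Establishing total positivity would likely proceed from the recurrence~\eqref{recur-1} by a production-matrix argument; again the negative term $-\binom{n}{m}{n-m\brace k-1}_{\leq m}$ is the obstruction, and total positivity should \emph{fail} precisely for $m=3,4$, which would be consistent with the conjecture.

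\textbf{Summary of the plan and the main obstacle.} I would pursue Route~A as the primary line: (i) dispose of $m=1,2$; (ii) set up the induction on $n$ with the hypothesis ``$B_{n,\leq m}$ real-rooted, non-positive roots, and interlaces $B_{n-1,\leq m}$ and $B_{n-m,\leq m}$ suitably''; (iii) show $t(1+\frac{d}{dt})$ preserves the interlacing-plus-non-positivity package; (iv) control the negative term $\binom{n}{m}t\,B_{n-m,\leq m}$ via a sign-alternation count of $B_{n+1,\leq m}$ at the roots of $(1+\frac{d}{dt})B_{n,\leq m}$. Step~(iv) — proving the subtraction does not create a complex-conjugate pair — is the crux, and I fully expect it to be where the argument is delicate and where $m=3$ and $m=4$ are genuinely excluded rather than merely awkward; this is presumably why the authors record it as an open conjecture.
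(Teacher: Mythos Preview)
The paper contains no proof of this statement: it is explicitly recorded as an open conjecture (the text immediately preceding it reads ``The authors have tried, without success, to establish the next two statements''). There is therefore nothing in the paper to compare your attempt against.

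Your proposal is not a proof but a survey of plausible strategies, and you seem to recognize this yourself in the final paragraph. Each of your three routes correctly isolates the same structural obstacle---the negative term $-\binom{n}{m}t\,B_{n-m,\leq m}(t)$ in the polynomial recurrence---but none of them overcomes it. In Route~A, step~(iv) is not merely ``delicate'': you have not supplied any mechanism (an inequality, a bound on root spacing, a compatible interlacing chain) that would prevent the subtraction from producing a complex pair, nor any argument that distinguishes $m\geq 5$ from $m=3,4$ at this step. Route~B is a hope rather than an argument; no claw-free or determinantal model is proposed, and for $m\geq 3$ the blocks are not edges of a graph, so Heilmann--Lieb does not apply directly. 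Route~C faces the same sign problem you already flagged. In short, you have accurately diagnosed why the conjecture is hard, but you have not advanced beyond the point where the authors themselves stopped; the statement remains open.
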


Recall that a finite sequence $\{ a_{j}, \, 0 \leq j \leq n \}$  of non-negative real numbers is called \textit{unimodal} if there is an index
$j^{*}$ such that $a_{j- 1} \leq a_{j}$ for $1 \leq j \leq j^{*}$ and $a_{j-1} \geq a_{j}$ for $j^{*} +1 \leq j \leq n$. An elementary argument
shows that a log-concave sequence must be unimodal. The unimodality of the restricted Stirling numbers
 $\left({n \brace k}_{\leq 2} \right)_{k\geq 0}$ was proved by
Choi and Smith in \cite{choijy-2003a}. Moreover, Han and Seo
 \cite{hanh-2004a} gave a combinatorial proof of the log-concavity of this sequence. The log-concavity of the associated
 Stirling numbers of the first kind was studied by Brenti in \cite{brenti-1993a}.

 \begin{conjecture}
 \label{conj2}
The sequence of restricted Stirling numbers $\left({n \brace k}_{\leq m} \right)_{k\geq 0}$ is log-concave.
\end{conjecture}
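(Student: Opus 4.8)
The plan is to reduce the conjecture to a statement about the coefficients of powers of a single polynomial and then to attack that statement by a lattice-path argument. Counting \emph{ordered} set partitions into blocks of size at most $m$ gives ${n\brace k}_{\leq m}=\tfrac{n!}{k!}\,c_{n,k}$, where $c_{n,k}:=[x^{n}]f(x)^{k}$ and $f(x)=\sum_{i=1}^{m}x^{i}/i!$ is the polynomial from Theorem~\ref{gfun} (equivalently $\sum_{n,k}{n\brace k}_{\leq m}\,\tfrac{x^{n}}{n!}\,y^{k}=\exp(y f(x))$). Since $\big(\tfrac{n!}{k!}c_{n,k}\big)^{2}\ge\tfrac{n!}{(k-1)!}c_{n,k-1}\cdot\tfrac{n!}{(k+1)!}c_{n,k+1}$ is the same as $c_{n,k}^{2}\ge\tfrac{k}{k+1}\,c_{n,k-1}c_{n,k+1}$, and $\tfrac{k}{k+1}<1$, it suffices to prove that $c_{n,k}$ is itself log-concave in $k$. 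I would also note at the outset that, because $f$ has a nonzero coefficient at every exponent $1,\dots,m$, the support of $\big({n\brace k}_{\leq m}\big)_{k}$ is the full integer interval $[\lceil n/m\rceil,\,n]$; in particular the sequence has no internal zeros, the standing hypothesis under which the usual log-concavity machinery operates.

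For the main step I would realize $c_{n,k}$ as a weighted path count: $c_{n,k}$ is the sum over all paths $0=h_{0}\to h_{1}\to\cdots\to h_{k}=n$ with $h_{t}-h_{t-1}\in\{1,\dots,m\}$ of the weight $\prod_{t}1/(h_{t}-h_{t-1})!$, i.e.\ over lattice paths from $(0,0)$ to $(k,n)$ with steps $(1,i)$, $1\le i\le m$. The sinks $(k,n)$ are collinear and the source is fixed; adjoining a second source at $(-1,0)$, whose paths to $(k,n)$ are counted by $c_{n,k+1}$, turns the desired inequality $c_{n,k-1}c_{n,k+1}\le c_{n,k}^{2}$ into the statement that the path matrix with rows $(c_{n,k-1},c_{n,k})$ and $(c_{n,k},c_{n,k+1})$ has nonpositive determinant. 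By the Lindstr\"om--Gessel--Viennot lemma this determinant equals, up to sign, a nonnegative weighted count of vertex-disjoint path pairs, and a short sign analysis finishes it: a path from the left source to the left sink must pass above, then below, any path from the right source to the right sink, so the identity matching of sources to sinks admits no vertex-disjoint pair, while the transposition matching contributes with sign $-1$. In matrix language this is the assertion that $\big({n\brace k}_{\leq m}\big)_{n,k}$, in a suitably sheared indexing, is totally positive of order $2$; since the one-step weight vector $\big(1,\tfrac12,\dots,\tfrac1{m!}\big)$ is log-concave with no internal zeros, I would hope to propagate this along the recurrence \eqref{recur-1} by a Cauchy--Binet argument.

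Two further routes I would keep in reserve. The first is induction on $m$ via the Stirling-level analogue of Theorem~\ref{formular1},
\[
{n\brace k}_{\leq m}=\sum_{i\ge0}\binom{n}{im}\frac{(im)!}{(m!)^{i}\,i!}\,{n-im\brace k-i}_{\leq m-1},
\]
which presents the $m$-th array as a shifted convolution of the (inductively log-concave) $(m-1)$-st array against the sequence $i\mapsto\frac{n!}{(m!)^{i}i!(n-im)!}$, itself log-concave in $i$; one would then need a preservation lemma for shifted convolutions of this shape, which again comes down to a total-positivity property of the array being convolved. The second is a direct injection $\mathcal{P}_{n,k-1}\times\mathcal{P}_{n,k+1}\hookrightarrow\mathcal{P}_{n,k}\times\mathcal{P}_{n,k}$, with $\mathcal{P}_{n,j}$ the partitions of $[n]$ into $j$ blocks of size at most $m$, generalising the injection of Han and Seo \cite{hanh-2004a} for $m=2$: there a partition into blocks of size $\le2$ is a matching and one toggles along a component of the symmetric difference of the two matchings to move the single ``excess'' doubleton, but for $m\ge3$ there is no matching model and a substitute excess-transfer move on the hypergraph-like partitions would have to be invented.

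The hard part will be the step the authors could not finish, and it is hard for a structural reason: real-rootedness is unavailable, since Conjecture~\ref{conj1} records that $B_{n,\leq m}(x)$ genuinely has non-real roots for $m=3,4$, so Harper-type and Newton-type arguments fail exactly where they would be most convenient, while the subtractive form of \eqref{recur-1} kills any naive three-term induction. In the lattice-path model the concrete obstacle is that a step may jump by as much as $m$, so two monotone paths can cross ``edge over edge'' without meeting at a vertex; one must first subdivide the network (distributing the weight $1/i!$ of a step $(1,i)$ onto a chain of $i$ edges) so that crossings become vertex crossings and Lindstr\"om--Gessel--Viennot applies, and only then is the sign count above legitimate. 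Carrying out this subdivision correctly --- equivalently, verifying that order-$2$ total positivity of $\big({n\brace k}_{\leq m}\big)_{n,k}$ reproduces itself under \eqref{recur-1} --- is the decisive and most delicate point, and is where I expect the real work to lie.
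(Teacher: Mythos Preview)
This statement is recorded in the paper as an open \emph{conjecture}; the authors explicitly write that they ``have tried, without success, to establish'' it (and Conjecture~\ref{conj1}). There is therefore no proof in the paper to compare your proposal against, and your write-up is honest about being a plan rather than a proof.

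Your reduction is correct: with $c_{n,k}=[x^{n}]f(x)^{k}$ and $f(x)=\sum_{i=1}^{m}x^{i}/i!$ one has ${n\brace k}_{\leq m}=\tfrac{n!}{k!}\,c_{n,k}$, and log-concavity of ${n\brace k}_{\leq m}$ in $k$ is exactly $c_{n,k}^{2}\ge\tfrac{k}{k+1}\,c_{n,k-1}c_{n,k+1}$. Strengthening this to $c_{n,k}^{2}\ge c_{n,k-1}c_{n,k+1}$ is a reasonable gambit, but be aware that you are now aiming for a strictly stronger statement than the conjecture; if it happens to fail you may still be able to recover the factor $\tfrac{k}{k+1}$ by a more careful count. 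Your observation about the support $[\lceil n/m\rceil,n]$ and the absence of internal zeros is also correct and worth recording.

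The Lindstr\"om--Gessel--Viennot outline is the natural attack, and you have put your finger on exactly the obstruction the paper could not get past: with step set $\{(1,1),\dots,(1,m)\}$ two paths can cross edge-over-edge without sharing a vertex, so the lemma does not apply to the raw network. Your proposed fix---subdividing each step $(1,i)$ into a chain and distributing the weight $1/i!$---is the standard move, but it is not automatic. After subdivision you must ensure that (i) the new intermediate vertices are genuinely distinct from the vertices other paths use, lest you create spurious intersections that overcount the non-intersecting systems, and (ii) the source/sink placement on the boundary of the subdivided planar graph still forces the sign pattern you claim. Neither point is obvious for general $m$, and this is precisely where the paper's authors, and you, are stuck. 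The same caveat applies to your ``total positivity propagated along \eqref{recur-1}'' reformulation: the subtractive term $-\binom{n}{m}{n-m\brace k-1}_{\leq m}$ in \eqref{recur-1} means the recursion is not a positive linear combination, so a naive Cauchy--Binet step does not go through; you would need to rewrite the recurrence in purely additive form (the first equality in \eqref{recur-1}) and control a much larger transfer matrix. Your assessment that this is ``the decisive and most delicate point'' is accurate; as written, the proposal is a well-informed roadmap rather than a proof.
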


One of the main sources of log-concave sequences comes from the following fact: if $P(x)$ is a polynomial all of whose zeros
are real and negative, the its coefficient sequence is log-concave. (See \cite[Theorem 4.5.2]{wilf-1990a} for a proof). Therefore
the first conjecture  implies the second one.

\section{Some Arithmetical Properties}
\label{sec-arith}

Given a prime $p$, the $p$-adic valuation of $x \in \mathbb{N}$, denoted by $\nu_{p}(x)$, is the highest power of $p$ that divides
 $x$.  For a given sequence of positive integers $(a_{n})_{n\geq 0}$ a description of the sequence of valuations
 $\nu_{p}(a_{n})$ often presents interesting questions. The classical formula of Legendre for factorials
\begin{equation*}
\nu_{p}(n!) = \sum_{k=1}^{\infty} \left\lfloor \frac{n}{p^{r}} \right\rfloor
\end{equation*}
\noindent
is one of the earliest such descriptions. This may also be expressed in closed-form as
\begin{equation*}
\nu_{p}(n!) = \frac{n - s_{p}(n)}{p-1},
\end{equation*}
\noindent
where $s_{p}(n)$ is the sum of the digits of $n$ in its expansion in base $p$.  The reader will find in \cite{amdeberhan-2008a,
amdeberhan-2008b,byrnes-2015a,cohn-1999a,kamano-2011a,moll-2010a,straub-2009a,sunx-2009a} a
selection of results on this topic.

\smallskip

The 2-adic valuation of the Bell numbers has been described in \cite{amdeberhan-2013f}.

\begin{theorem}\label{2adbell}
The 2-adic valuation of the Bell numbers satisfy $\nu_2(B_n)=0$ if  $n \equiv 0, 1 \bmod 3$. In the missing
case, $n\equiv 2 \bmod 3$,  $\nu_2(B_{3n+2})$ is a periodic
 sequence of period 4. The repeating values are $\{1, 2, 2, 1\}$.
 \end{theorem}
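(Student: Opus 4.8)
The plan is to understand the 2-adic behavior of $B_n$ through a congruential recurrence modulo powers of 2, exploiting the well-known Touchard congruence and the residue structure of the Bell numbers mod small powers of 2.

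First, I would recall the \emph{Touchard congruence}: for any prime $p$, $B_{n+p} \equiv B_{n+1} + B_n \pmod{p}$, and more generally the Hall--Touchard congruence $B_{n+p^m} \equiv m B_n + B_{n+1} \pmod{p}$. For $p=2$ this gives $B_{n+2}\equiv B_{n+1}+B_n \pmod 2$, so the sequence $(B_n \bmod 2)$ satisfies the Fibonacci recurrence; with $B_0=B_1=1$ this produces the purely periodic pattern $1,1,0$ of period $3$. That immediately yields $\nu_2(B_n)=0$ precisely when $n\equiv 0,1\pmod 3$, and $\nu_2(B_n)\ge 1$ when $n\equiv 2\pmod 3$, establishing the first assertion. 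The remaining content is to pin down $\nu_2(B_{3n+2})$ exactly.

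Next I would lift to higher powers of 2. The cleanest route is to work modulo $8$ (and perhaps $16$, to be safe about distinguishing valuation $2$ from valuation $\ge 3$) using a recurrence for $B_n$ valid modulo $2^k$. One option is the Touchard-type congruence mod $2^k$, e.g. $B_{n+2^k}\equiv k B_n + B_{n+1} \pmod{2}$ iterated/refined, or more directly the recurrence coming from the minimal polynomial of the Artin--Schreier / Frobenius-type operator on Bell numbers mod $2^k$, which is known to be eventually periodic. Concretely, modulo $8$ the Bell numbers are eventually periodic with some period $P$ (a classical fact; the minimal period of $B_n \bmod 2^k$ has been computed, and for $2^3$ one can take $P=24$). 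I would tabulate $B_n \bmod 16$ for $n$ in one full period restricted to $n\equiv 2\pmod 3$, i.e. for the twelve residues $n=3t+2$, $t=0,1,\dots,11$, read off $\nu_2(B_{3t+2})$ in each case, observe the values cycle through $\{1,2,2,1\}$ with period $4$ in $t$, and conclude by periodicity of $B_n\bmod 16$. To make this rigorous rather than merely computational, I would (i) prove the eventual periodicity of $B_n\bmod 2^k$ from a linear recurrence with constant coefficients mod $2^k$ — such a recurrence exists because $\sum B_n x^n/n! = e^{e^x-1}$ and the relevant shift operator on $\mathbb{Z}/2^k$ satisfies a polynomial identity — and (ii) verify the initial segment covers a full period so the pattern is forced.

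The main obstacle is step (ii): obtaining a \emph{clean, provable} linear recurrence for $B_n$ modulo $8$ (and $16$) with an explicitly bounded period, so that a finite check suffices. The Touchard congruence only controls $B_n \bmod p$; the analogous statements mod $p^k$ are more delicate, and one must either cite the precise periodicity result for $B_n \bmod 2^k$ (e.g. from the literature on Bell numbers modulo prime powers, which the paper references via \cite{amdeberhan-2013f}) or derive it. Once a recurrence of known period $P$ mod $16$ is in hand, the proof reduces to a finite table and the observation that within each block of twelve relevant indices the valuations are $1,2,2,1,1,2,2,1,1,2,2,1$, i.e.\ periodic of period $4$; I would also double-check that no entry secretly has valuation $\ge 3$ by carrying the computation mod $16$ rather than mod $8$. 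I expect the final write-up to lean on the cited source for the periodicity and present the valuation pattern as the outcome of the explicit residue computation.
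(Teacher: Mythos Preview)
The paper does not prove this statement at all: Theorem~\ref{2adbell} is quoted from \cite{amdeberhan-2013f} (the sentence immediately preceding it reads ``The 2-adic valuation of the Bell numbers has been described in \cite{amdeberhan-2013f}''), and no proof is given here. So there is no ``paper's own proof'' to compare against; the paper simply imports the result and uses it, for instance, in the proof of the next theorem on $\nu_2(B_{n,\geq 2})$.

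Your outline is a reasonable sketch of how one would actually establish the result, and you are right that the Touchard congruence $B_{n+2}\equiv B_{n+1}+B_n\pmod 2$ immediately yields the period-$3$ parity pattern $1,1,0$ and hence the first claim. For the second claim your plan---prove periodicity of $B_n$ modulo $2^k$ for a suitable $k$ and then finish by a finite check modulo $16$---is the standard route, and is essentially what is carried out in \cite{amdeberhan-2013f}. You correctly identify the only real work as justifying an explicit period bound modulo $2^k$; note that the naive Touchard lift you wrote, $B_{n+p^m}\equiv mB_n+B_{n+1}\pmod p$, is still only a congruence modulo $p$, so it does not by itself give periodicity modulo $8$ or $16$. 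One needs instead the stronger fact that $(B_n\bmod N)$ is eventually periodic for every $N$ (which follows, e.g., from the umbral recurrence $B_{n+1}=\sum_k\binom{n}{k}B_k$ reduced modulo $N$ together with a pigeonhole argument on finite state), with an explicit period one can compute. Once that is in hand, your finite verification modulo $16$ settles the $\{1,2,2,1\}$ pattern. In short: your proposal is sound as a proof strategy, but be aware the paper itself offers nothing to compare it to---it defers entirely to the cited reference.
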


The  $2$-adic valuation of the restricted Bell sequence $B_{n, \leq 2}$  was described in \cite{amdeberhan-2015b}.

\begin{theorem}
\label{val-bell}
The 2-adic valuation of the restricted Bell numbers $B_{n, \leq 2}$ satisfy
\begin{equation}
\nu_{2}(B_{n, \leq 2}) =  \left\lfloor \frac{n}{2} \right\rfloor -2 \left\lfloor \frac{n}{4} \right\rfloor +
\left\lfloor \frac{n+1}{4} \right\rfloor =
\begin{cases}
k, & \quad \text{ if } n = 4k; \nonumber \\
k, & \quad \text{ if } n = 4k+1;  \nonumber \\
k+1, & \quad \text{ if } n = 4k+2;  \nonumber \\
k+2, & \quad \text{ if } n = 4k+3. \nonumber
\end{cases}
\end{equation}
\end{theorem}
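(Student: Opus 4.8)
The plan is to establish the recurrence $\nu_2(B_{n,\leq 2})$ directly from the three-term recurrence \eqref{recBn2}, namely $B_{n,\leq 2}=B_{n-1,\leq 2}+(n-1)B_{n-2,\leq 2}$, by an induction on $n$ that respects the residue of $n$ modulo $4$. First I would compute the small values $B_{0,\leq 2}=1$, $B_{1,\leq 2}=1$, $B_{2,\leq 2}=2$, $B_{3,\leq 2}=4$, $B_{4,\leq 2}=10$, $B_{5,\leq 2}=26$, $B_{6,\leq 2}=76$, $B_{7,\leq 2}=232$, so that $\nu_2$ reads $0,0,1,2,1,1,2,3$, matching the claimed pattern for $k=0,1$; these serve as the base cases. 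The induction hypothesis is that for all indices below $n$ the stated closed form holds. The key observation making the induction work is the ultrametric inequality: if two 2-adic numbers have \emph{different} valuations, the valuation of their sum is the minimum of the two, and the only delicate case is when the two valuations coincide, where one must look one digit deeper.

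The core of the argument is a case analysis on $n\bmod 4$, using $\nu_2(n-1)$ as an input. I would split as follows. If $n=4k$, then $n-1=4k-1$ is odd, so $\nu_2\big((n-1)B_{n-2,\leq 2}\big)=\nu_2(B_{4k-2,\leq 2})=(k-1)+1=k$, while $\nu_2(B_{n-1,\leq 2})=\nu_2(B_{4k-1,\leq 2})=(k-1)+2=k+1$; the valuations differ, so $\nu_2(B_{4k,\leq 2})=\min\{k,k+1\}=k$, as required. If $n=4k+1$, then $n-1=4k$ is even with $\nu_2(4k)=2+\nu_2(k)$, so $\nu_2\big((n-1)B_{n-2,\leq 2}\big)=\big(2+\nu_2(k)\big)+\nu_2(B_{4k-1,\leq 2})=\big(2+\nu_2(k)\big)+(k-1+2)$, which is at least $k+3$, while $\nu_2(B_{4k,\leq 2})=k$; again the valuations are distinct and the minimum is $k$. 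If $n=4k+3$, then $n-1=4k+2$ has $\nu_2(4k+2)=1$, so $\nu_2\big((n-1)B_{n-2,\leq 2}\big)=1+\nu_2(B_{4k+1,\leq 2})=1+k$, while $\nu_2(B_{4k+2,\leq 2})=k+1$; here the two valuations coincide, and this is the one genuinely hard case. The remaining residue $n=4k+2$ gives $n-1=4k+1$ odd, so $\nu_2\big((n-1)B_{n-2,\leq 2}\big)=\nu_2(B_{4k,\leq 2})=k$ and $\nu_2(B_{4k+1,\leq 2})=k$; so this residue is \emph{also} a coinciding-valuation case.

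So the main obstacle is precisely the residues $n\equiv 2$ and $n\equiv 3\pmod 4$, where the two terms on the right of \eqref{recBn2} have equal 2-adic valuation and one must show the next 2-adic digit forces a drop (an increase in valuation) by exactly one. To handle these I would strengthen the induction hypothesis to track the residue of $B_{n,\leq 2}/2^{\nu_2(B_{n,\leq 2})}$ modulo $2$ (or better, modulo $4$): concretely, I would carry along the auxiliary claims that $B_{4k,\leq 2}/2^k$ and $B_{4k+1,\leq 2}/2^k$ are congruent to the same odd residue modulo $2^2$ — which is what makes $B_{4k,\leq 2}+(4k+1)B_{4k+1,\leq 2}$ pick up an extra factor of $2$ — and similarly that $B_{4k+1,\leq 2}/2^k$ and $B_{4k+2,\leq 2}/2^{k+1}$ have a controlled relation modulo $2^2$ forcing the jump at $n=4k+3$. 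An alternative, perhaps cleaner route is to avoid digit-chasing entirely: set $C_k=B_{2k,\leq 2}$ and $D_k=B_{2k+1,\leq 2}$ and derive from \eqref{recBn2} a clean pair of recurrences relating $C_{k+1},D_{k+1}$ to $C_k,D_k$ with explicit polynomial-in-$k$ coefficients, then observe that $\nu_2(C_k)$ and $\nu_2(D_k)$ each satisfy a two-step recurrence whose 2-adic structure matches Legendre-type floor expressions; this reduces the problem to a finite check plus Legendre's formula. Either way, once the two awkward residues are dispatched, the remaining algebra is routine and the displayed floor formula follows by reassembling the four cases and simplifying $\lfloor n/2\rfloor-2\lfloor n/4\rfloor+\lfloor (n+1)/4\rfloor$.
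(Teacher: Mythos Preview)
The paper does \emph{not} prove this theorem; it merely states it and attributes the result to \cite{amdeberhan-2015b}. So there is no in-paper proof to compare against. I will therefore assess your plan on its own merits.

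Your overall strategy---induction on $n$ via the recurrence \eqref{recBn2}, with a case split on $n\bmod 4$---is sound, and your identification of the easy cases ($n\equiv 0,1\bmod 4$, where the two summands have different valuations) versus the hard ones ($n\equiv 2,3\bmod 4$, where they coincide) is exactly right. The gap in the proposal is that you stop short of \emph{formulating and verifying} the strengthened induction hypothesis; you say you would ``carry along the auxiliary claims'' about the odd parts modulo $4$, but you neither state them precisely nor check that they propagate. Without that, the two coincidence cases are not actually handled.

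Here is the concrete strengthening that makes the induction close. Write $a_k=B_{4k,\leq 2}/2^{k}$, $b_k=B_{4k+1,\leq 2}/2^{k}$, $c_k=B_{4k+2,\leq 2}/2^{k+1}$, $d_k=B_{4k+3,\leq 2}/2^{k+2}$, and carry the hypothesis that all four are odd integers. From \eqref{recBn2} at $n=4k+1$ you get $b_k=a_k+8k\,d_{k-1}$, so in fact $b_k\equiv a_k\pmod 8$, which is stronger than you guessed. Feeding this into the $n=4k+2$ step gives $2c_k=b_k+(4k+1)a_k\equiv 2a_k+4ka_k\pmod 8$, hence $c_k\equiv (1+2k)a_k\pmod 4$ is odd. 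Then at $n=4k+3$ one finds $2d_k=c_k+(2k+1)b_k\equiv (1+2k)a_k+(2k+1)a_k=2(2k+1)a_k\equiv 2\pmod 4$, so $d_k$ is odd. Finally $a_{k+1}=2d_k+(4k+3)c_k$ is odd, and the cycle repeats. Once you write down these four lines, both ``hard'' residues are dispatched and the floor-function formula follows by the routine simplification you describe. Your alternative route via the pair $(C_k,D_k)=(B_{2k,\leq2},B_{2k+1,\leq2})$ does not decouple as cleanly as you suggest and would ultimately lead you back to the same mod-$4$ bookkeeping.
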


This section discusses  the 2-adic valuation of the numbers $B_{n, \geq 2}$ and $A_{n, \geq 2}$. Figure \ref{figurea}  shows
 the first 100 values.

\begin{figure}[H]
\begin{center}
\includegraphics[scale=0.8]{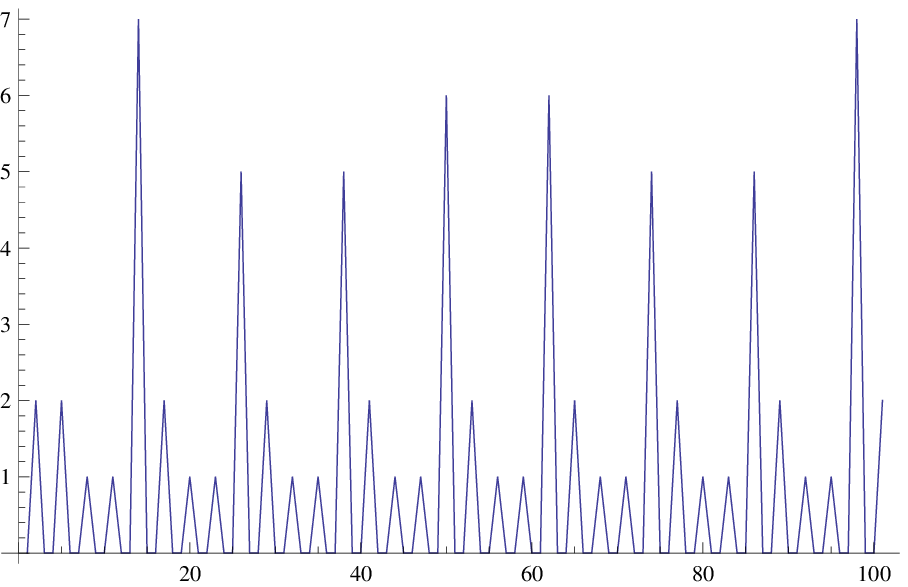}
\includegraphics[scale=0.8]{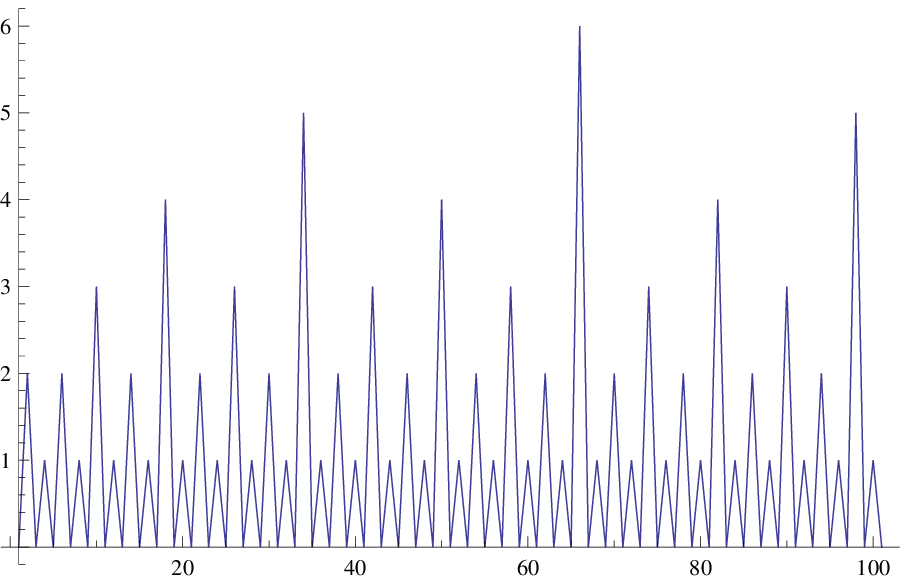}
\caption{The $2$-adic valuation of $B_{n,\geq 2}$ and $A_{n, \geq 2}$.}
\label{figurea}
\end{center}
\end{figure}

\begin{theorem}
The 2-adic valuation of the associated Bell numbers $B_{n, \geq 2}$ is given by
\begin{equation}
\nu_2(B_{n, \geq 2})=0 \text{  if }n\equiv 0, \, 2 \bmod 3.
\end{equation}
\noindent
For $n \equiv 1 \bmod 3$, the valuation satisfies $\nu_{2}(B_{n, \geq 2}) \geq 1$.
\end{theorem}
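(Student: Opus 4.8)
The plan is to work modulo $2$ and exploit the linear relation $B_n = B_{n,\geq 2} + B_{n+1,\geq 2}$ from \eqref{idbell2} together with the description of $\nu_2(B_n)$ recorded in Theorem~\ref{2adbell}. From that theorem, $B_n$ is odd precisely when $n\equiv 0$ or $1\bmod 3$, and even precisely when $n\equiv 2\bmod 3$; this is the only information about the Bell numbers that is needed. The base data on the other side is $B_{0,\geq 2}=1$ (the empty partition) and $B_{1,\geq 2}=0$, which are also the initial values making \eqref{idbell2} valid for all $n\geq 0$.

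First I would prove, by induction on $n\geq 0$, the parity statement
\[
B_{n,\geq 2}\ \text{is odd}\ \iff\ n\not\equiv 1\bmod 3.
\]
The base case $n=0$ is immediate since $B_{0,\geq 2}=1$. For the inductive step, solving \eqref{idbell2} gives $B_{n+1,\geq 2}\equiv B_n+B_{n,\geq 2}\pmod 2$, and one splits according to the residue of $n$ modulo $3$. If $n\equiv 0$, then $B_n$ is odd and, by the inductive hypothesis, $B_{n,\geq 2}$ is odd, so $B_{n+1,\geq 2}$ is even, consistent with $n+1\equiv 1$. If $n\equiv 1$, then $B_n$ is odd and $B_{n,\geq 2}$ is even, so $B_{n+1,\geq 2}$ is odd, consistent with $n+1\equiv 2$. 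If $n\equiv 2$, then $B_n$ is even and $B_{n,\geq 2}$ is odd, so $B_{n+1,\geq 2}$ is odd, consistent with $n+1\equiv 0$. This closes the induction.

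The theorem then follows at once: $\nu_2(B_{n,\geq 2})=0$ exactly when $B_{n,\geq 2}$ is odd, which by the claim happens exactly when $n\equiv 0$ or $2\bmod 3$; and when $n\equiv 1\bmod 3$ the number $B_{n,\geq 2}$ is even, so $\nu_2(B_{n,\geq 2})\geq 1$. I do not expect a genuine obstacle here — it is a routine parity induction, the only care being the bookkeeping over the three residue classes modulo $3$ and the correct initial values. The genuinely harder question, presumably the one deferred to the companion work mentioned in the introduction, is to pin down the \emph{exact} value of $\nu_2(B_{n,\geq 2})$ when $n\equiv 1\bmod 3$: the relation \eqref{idbell2} transfers only one $2$-adic digit of information at a time, so obtaining the full valuation in that class would require either a finer recurrence or an analysis of the binomial-sum expression $B_{n,\geq 2}=\sum_{i=0}^n(-1)^i\binom{n}{i}B_{n-i}$ via Lucas' and Kummer's theorems combined with the valuation data for $B_n$.
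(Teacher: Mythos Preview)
Your proof is correct and follows essentially the same approach as the paper's: both arguments are an induction on $n$ using the relation \eqref{idbell2} to express $B_{n+1,\geq 2}$ (equivalently $B_{3k,\geq 2}$, etc.) in terms of $B_n$ and $B_{n,\geq 2}$, combined with the parity of $B_n$ from Theorem~\ref{2adbell}, and split into the three residue classes modulo $3$. Your write-up is slightly cleaner in that you make the base case $B_{0,\geq 2}=1$ explicit and organize the induction as a forward step $n\to n+1$, but the underlying argument is the same.
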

\begin{proof}
The proof is by induction on $n$. Divide the analysis  into three cases according to the residue of $n$ modulo 3.  If
$n=3k$ then  \eqref{idbell2} gives $B_{3k-1}=B_{3k-1,\geq 2} + B_{3k, \geq 2}$. Theorem
 \ref{2adbell} shows that $B_{3k-1}$  is even and by the induction hypothesis  $B_{3k-1,\geq 2}$ is odd. Thus
 $B_{3k, \geq 2}$ is
 odd, so that $\nu_2(B_{3k, \geq 2})=0$. The proof is analogous for the case $3k+2$.  The case $n \equiv 1 \bmod 3$ follows
 from the identity \eqref{idbell2} in the form $B_{3k} = B_{3k,\geq 2} + B_{3k+1, \geq 2}$
 and the fact that $B_{3k}$ is odd (by Theorem \ref{val-bell}) and so is $B_{3k, \geq 2}$ by the previous analysis.
\end{proof}

A partial description of the valuations of $B_{n, \geq 2}$ for $n \equiv 1 \bmod 3$ is given in the next conjecture.

\begin{conjecture}
The sequence of valuations  $\nu_{2}(B_{3k+1, \geq 2})$ satisfies the following pattern:  \begin{equation}
\nu_{2}(B_{n, \geq 2}) = \begin{cases}
2, & \text{ if } n \equiv 4  \,\,\,\,\,\,\,\,\,\,\,\, \bmod 12; \\
1, & \text{ if } n \equiv 7, \, 10 \,\,  \bmod 12.
\end{cases}
\end{equation}
\noindent
The remaining case $n \equiv 1 \bmod 12$, considered modulo $24$, obeys the rule
\begin{equation}
\nu_{2}(B_{24n+1, \geq 2}) = 5 + \nu_{2}(n), \quad \text{ for } n \geq 1,
\end{equation}
\noindent
with the case $n \equiv 13 \bmod 24$ remaining to be determined. Continuing this process yields the conjecture
\begin{equation}
\nu_{2}(B_{48n+37, \geq 2}) = 5  \text{ and } \nu_{2}(B_{96n+61, \geq 2}) = 6.
\end{equation}
\noindent
The details of this analysis will appear elsewhere.
\end{conjecture}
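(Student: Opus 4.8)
The plan is to reduce the statement to the $2$-adic behaviour of \emph{alternating partial sums} of the ordinary Bell numbers, for which Theorem~\ref{2adbell} already gives complete control of the individual terms. Starting from the identity \eqref{idbell2}, $B_n = B_{n,\geq 2} + B_{n+1,\geq 2}$, together with $B_{0,\geq 2}=1$, a telescoping argument yields the closed form
$$B_{n,\geq 2} = (-1)^{n-1}\bigl( T_n - 1 \bigr), \qquad T_n := \sum_{j=0}^{n-1} (-1)^j B_j ,$$
so that $\nu_2(B_{n,\geq 2}) = \nu_2(T_n-1)$. Thus the problem becomes: for $n \equiv 1 \bmod 3$, determine the exact power of $2$ by which $T_n$ differs from $1$. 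The gain is twofold: first, $T_n$ is assembled from a sequence whose $2$-adic valuation we know exactly; second, $T_{n+3}-T_n = (-1)^n\bigl(B_n - B_{n+1} + B_{n+2}\bigr)$, which by Touchard's congruence $B_{n+2}\equiv B_n+B_{n+1}\bmod 2$ and its refinements modulo higher powers of $2$ is highly $2$-divisible; iterating these block relations is what should generate the residue classes modulo $12$, $24$, $48$, $\dots$ appearing in the conjecture.

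For the three \emph{stable} cases $n\equiv 4,7,10 \bmod 12$ I would argue by induction in steps of $12$: first establish that $T_{n+12}\equiv T_n \bmod 8$ for all $n$ — a finite verification once a recurrence for $T_{n+12}-T_n$ modulo $8$ has been written down from the block relation above and the periodicity of $B_j$ modulo $8$ — and then check the three base residues directly, which pins $\nu_2(T_n-1)$ to $2$ in the first case and $1$ in the other two. This step should be essentially mechanical.

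The genuinely hard case is $n\equiv 1 \bmod 12$, where the valuation is unbounded and the conjecture asserts a self-similar structure: $\nu_2(B_{24n+1,\geq 2}) = 5 + \nu_2(n)$, with the exceptional branch $n\equiv 13 \bmod 24$ splitting again modulo $48$, then modulo $96$, and so on. The natural approach is the one standard for such sequences: for each fixed $k$, derive a \emph{finite-order} linear recurrence for $B_{n,\geq 2} \bmod 2^k$ from the functional equation $g'(x) = (e^x-1)\,g(x)$ satisfied by the generating function $g(x) = \exp(e^x-1-x)$ — truncating $e^x-1$ modulo $2^k$ and invoking the periodicity of the binomial coefficients modulo $2^k$ (Kummer) — and then run the usual ``valuation tree'': show that after subdividing $1\bmod 12$ into $1,\,13 \bmod 24$, one subclass has constant valuation $5$ (and at deeper levels $6$, and so forth) while the other, after rescaling by a fixed power of $2$, reduces to the same problem one level down, producing the increment $\nu_2(n)$.

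The main obstacle is this last step: controlling $T_n$ — equivalently $B_{n,\geq 2}$ — modulo arbitrarily high powers of $2$ with enough uniformity to sustain the descent. The finite modular recurrences exist in principle, but proving that every node of the branching has \emph{exactly one} stabilising child and one self-similar child, rather than degenerating, is the delicate part, and is presumably why the authors defer the details. A cleaner route worth pursuing is to search for a bona fide linear recurrence for $B_{n,\geq 2}$ with polynomial coefficients, analogous to \eqref{bonab}--\eqref{bonab2} for the derangement numbers; such a recurrence would make the $2$-adic self-similarity transparent and reduce the argument to a bounded computation at each level of the tree.
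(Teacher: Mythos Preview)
There is nothing to compare against here: the statement is labeled a \emph{Conjecture} in the paper, and the authors explicitly write ``The details of this analysis will appear elsewhere.'' No proof is given, not even a sketch. So your proposal is not an alternative to the paper's argument --- it is an attempt to prove something the paper leaves open.

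As a strategy, your reduction is sound. The telescoping of \eqref{idbell2} to $B_{n,\geq 2}=(-1)^{n-1}(T_n-1)$ with $T_n=\sum_{j=0}^{n-1}(-1)^jB_j$ is correct (and worth recording), and recasting the problem as controlling alternating Bell sums modulo powers of~$2$ is a reasonable line of attack. One caution: you say Theorem~\ref{2adbell} ``already gives complete control of the individual terms,'' but it only gives $\nu_2(B_j)$, not $B_j \bmod 2^k$ for $k\geq 2$; the periodicity of $B_j$ modulo $8$, $16$, \dots\ that you later invoke is an extra input you would need to supply (it is true --- Bell numbers are periodic modulo any $m$ --- but the period modulo $2^k$ grows with $k$ and must be computed). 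For the stable classes $n\equiv 4,7,10 \bmod 12$ your plan should go through once that periodicity is nailed down. For the unbounded case $n\equiv 1\bmod 12$ you yourself identify the obstruction: proving that the valuation tree branches with exactly one stabilising and one self-similar child at every level is precisely what is \emph{not} established, and your proposal does not close that gap. So what you have written is a plausible outline matching the spirit of the conjecture's statement, but --- like the paper --- not a proof.
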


A closed-form for the  valuation $\nu_{2}(A_{n, \geq 2})$ is simpler to obtain.

\begin{theorem}
The 2-adic valuation of the associated factorial numbers $A_{n, \geq 2}$ is given by \begin{align*}
\nu_2(A_{n, \geq 2})=
\begin{cases}
0, & \text{if} \ n=2k \,\,\,\,\,\,\,\,\,\, \text{ and } k \geq 0;\\
\nu_2(k)+1, & \text{if} \ n=2k+1 \text{ and } k \geq 1.
\end{cases}
\end{align*}
\end{theorem}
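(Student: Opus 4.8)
The plan is to avoid any real induction and instead play the two recurrences for the derangement numbers $A_{n,\geq 2}$ against each other: equation \eqref{bonab} controls the parity of $A_{n,\geq 2}$, while equation \eqref{bonab2} exhibits the explicit factor $n-1$. First I would dispose of the even case. For $k\geq 1$, \eqref{bonab} gives $A_{2k,\geq 2}=2k\,A_{2k-1,\geq 2}+(-1)^{2k}=2k\,A_{2k-1,\geq 2}+1$, which is odd; since also $A_{0,\geq 2}=1$, we get $\nu_2(A_{2k,\geq 2})=0$ for every $k\geq 0$.

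Next I would record that $A_{n,\geq 2}$ is even for every odd $n$. For $n=1$ this is just $A_{1,\geq 2}=0$, and for odd $n\geq 3$ equation \eqref{bonab} reads $A_{n,\geq 2}=n\,A_{n-1,\geq 2}-1$, where $n$ is odd and $A_{n-1,\geq 2}$ is odd by the even case just proved; hence the right-hand side is even. With both parity facts in hand, the main step is to apply \eqref{bonab2} with $n=2k+1$, $k\geq 1$:
\[
A_{2k+1,\geq 2}=2k\bigl(A_{2k,\geq 2}+A_{2k-1,\geq 2}\bigr).
\]
Here $A_{2k,\geq 2}$ is odd and $A_{2k-1,\geq 2}$ is even, so their sum is odd and contributes nothing to the $2$-adic valuation. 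Therefore $\nu_2(A_{2k+1,\geq 2})=\nu_2(2k)=\nu_2(k)+1$, which is the asserted formula.

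There is no substantial obstacle; the content is simply the observation that \eqref{bonab} and \eqref{bonab2} jointly reduce the problem to bookkeeping of parities. The only point requiring care is the boundary: because $A_{1,\geq 2}=0$, the evenness of $A_{n,\geq 2}$ for odd $n$ must be argued for $n=1$ by hand rather than through \eqref{bonab}, and one should note that the even-case argument via \eqref{bonab} needs $k\geq 1$ with $A_{0,\geq 2}=1$ handled separately. Everything else follows directly from the two stated recurrences and the definition $\nu_p(xy)=\nu_p(x)+\nu_p(y)$.
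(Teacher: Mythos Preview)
Your proof is correct and follows essentially the same approach as the paper: use \eqref{bonab} to see that $A_{2k,\geq 2}$ is odd, then apply \eqref{bonab2} at $n=2k+1$ to extract the factor $2k$. Your write-up is in fact more complete than the paper's, since you explicitly verify that $A_{2k-1,\geq 2}$ is even (so that the bracket $A_{2k,\geq 2}+A_{2k-1,\geq 2}$ is odd), a step the paper leaves implicit.
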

\begin{proof}
If $n$ is even, then  \eqref{bonab} shows that $A_{n, \geq 2}$ is odd, so that  $\nu_2(A_{n, \geq 2})=0$.  If $n$ is odd
then  \eqref{bonab2} gives $\nu_2(A_{2k+1, \geq 2})=\nu_2(2k)=\nu_2(k)+1$.
\end{proof}


\subsection{Some additional patterns}
In this subsection we show some additional examples of the $p$-adic valuation of the restricted and associated Bell and factorial sequences.\\

Theorems \ref{teoas3} and \ref{genAigner} are now used to produce explicit formulas for
 the 2-adic valuation of the restricted and associated factorial numbers for $m=3$.

\begin{theorem}
The 2-adic valuation of the restricted factorial numbers $A_{n, \leq 3}$, for $n \geq 1$,  is given by \begin{align*}
\nu_2(A_{n, \leq 3})=
\begin{cases}
k, & \text{if} \ n=4k;\\
k, & \text{if} \ n=4k+1;\\
k+1, & \text{if} \ n=4k+2;\\
k+1, & \text{if} \ n=4k+3.
\end{cases}
\end{align*}
\end{theorem}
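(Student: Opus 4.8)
The plan is to work directly from the closed form of Theorem~\ref{teoas3},
\[
A_{n,\leq 3}=\sum_{j=0}^{\lfloor n/3\rfloor}c_j\,A_{n-3j,\leq 2},\qquad c_j=\binom{n}{3j}\frac{(3j)!}{3^j\,j!}=\frac{n!}{(n-3j)!\,3^j\,j!},
\]
using that $A_{m,\leq 2}=B_{m,\leq 2}$ together with the explicit formula of Theorem~\ref{val-bell}, of which I only need the consequences that $\nu_2(A_{m,\leq 2})$ equals $k,k,k+1,k+2$ for $m\equiv 0,1,2,3\bmod 4$ respectively, and in particular $\nu_2(A_{m,\leq 2})\geq\lfloor m/4\rfloor$. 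The idea is to locate, in each residue class of $n$ modulo $4$, a single index $j$ at which $\nu_2(c_jA_{n-3j,\leq 2})$ is \emph{strictly} minimal; then $\nu_2(A_{n,\leq 3})$ equals that minimum, since the remaining terms cannot cancel it.

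First I would discard the range $j\geq 4$. Write $c_j=\binom{n}{3j}\,T_j\,2^j$, where $T_j=\frac{(3j)!}{(3!)^j j!}$ is the (integer) number of partitions of a $3j$-element set into blocks of size $3$; since $T_{j+1}=\binom{3j+2}{2}T_j$, the integers $\nu_2(T_j)$ are non-decreasing, and $\nu_2(T_4)=\nu_2(15400)=3$, so $\nu_2(c_j)\geq j+3$ for $j\geq 4$. Combined with $\nu_2(A_{n-3j,\leq 2})\geq\lfloor(n-3j)/4\rfloor\geq\lfloor n/4\rfloor-j$ this gives $\nu_2(c_jA_{n-3j,\leq 2})\geq\lfloor n/4\rfloor+3$ for every $j\geq 4$. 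Since the value claimed for $\nu_2(A_{n,\leq 3})$ never exceeds $\lfloor n/4\rfloor+1$, these terms are irrelevant.

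It remains to handle $j\in\{0,1,2,3\}$, a finite check organized by $n\bmod 4$, using $\nu_2(c_1)=\nu_2(n)+\nu_2(n-1)+\nu_2(n-2)$, $\nu_2(c_2)=\sum_{i=0}^{5}\nu_2(n-i)-1$, $\nu_2(c_3)=\sum_{i=0}^{8}\nu_2(n-i)-1$, and Theorem~\ref{val-bell} for the factors $A_{n-3j,\leq 2}$. For $n\in\{4k,4k+1,4k+2\}$ one finds that $j=0$ is the unique minimizer, contributing $\nu_2(A_{n,\leq 2})=k,k,k+1$ respectively, the terms $j=1,2,3$ all having valuation at least $\lfloor n/4\rfloor+2$. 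For $n=4k+3$ the term $j=0$ contributes $\nu_2(A_{4k+3,\leq 2})=k+2$, but $j=1$ contributes $\nu_2(c_1)+\nu_2(A_{4k,\leq 2})=1+k$ (since $\nu_2(c_1)=\nu_2(4k+2)=1$ there), which is strictly smaller and is the unique minimizer, the terms $j=2,3$ again being at least $k+2$. In every case the resulting minimum matches the value in the statement; the cases $n\leq 2$, where only the $j=0$ term is present, are immediate.

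The only step calling for an idea rather than bookkeeping is the tail bound for $j\geq 4$, where the monotonicity of $\nu_2(T_j)$ (equivalently, of $\nu_2(j!)$) is the point. The part to be careful about is verifying, residue class by residue class, that the smallest valuation among $j\in\{0,1,2,3\}$ is attained by exactly one index, so that the leading part of the sum is an odd multiple of the relevant power of $2$ and no $2$-adic cancellation occurs.
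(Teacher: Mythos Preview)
Your argument is correct, but it takes a genuinely different route from the paper's. The paper proceeds by induction on $n$ via the three–term recurrence
\[
A_{n,\leq 3}=A_{n-1,\leq 3}+(n-1)A_{n-2,\leq 3}+(n-1)(n-2)A_{n-3,\leq 3}
\]
from Theorem~\ref{teoas3}: in each residue class of $n$ modulo $4$ one writes the three previous values as $2^{e_i}O_i$ with $O_i$ odd (by the induction hypothesis), checks that the three summands are all odd multiples of the same power of $2$, and concludes that their sum is an odd multiple of that power. Your approach instead uses the \emph{other} formula in Theorem~\ref{teoas3}, the closed form $A_{n,\leq 3}=\sum_j c_jA_{n-3j,\leq 2}$, together with Theorem~\ref{val-bell} for $\nu_2(A_{m,\leq 2})$, and locates a unique term of strictly minimal valuation. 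The tail estimate via $\nu_2(T_j)\geq 3$ for $j\geq 4$ is a clean way to discard all but four terms, and the case analysis for $j\in\{0,1,2,3\}$ checks out (in particular the switch of the minimizer from $j=0$ to $j=1$ when $n\equiv 3\bmod 4$). The paper's inductive proof is shorter and entirely self-contained; your argument avoids induction at the cost of importing Theorem~\ref{val-bell}, and has the advantage of explaining directly where the valuation ``comes from'' (the involution factor when $n\not\equiv 3\bmod 4$, the $j=1$ term otherwise). One small quibble: the parenthetical ``equivalently, of $\nu_2(j!)$'' is not literally an equivalence, though the monotonicity of $\nu_2(T_j)$ you actually use follows immediately from $T_{j+1}/T_j=\binom{3j+2}{2}\in\mathbb{Z}$, as you note.
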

\begin{proof}
The proof is by induction on $n$. It is divided into four cases according to the residue of $n$ modulo 4.  The
symbols $O_i$ denote an odd number. If $n=4k$ then Theorem \ref{teoas3} and the induction hypothesis give
 \begin{align*}
 A_{4k, \leq 3}&=A_{4k-1, \leq 3}+(4k-1)A_{4k-2, \leq 3}+(4k-1)(4k-2)A_{4k-3, \leq 3}\\
 &=2^kO_1+(4k-1)2^kO_2+(4k-1)(4k-2)2^{k-1}O_3\\
 &=2^k(O_1+(4k-1)O_2+(4k-1)(2k-1)O_3)\\
 &=2^kO_4.
\end{align*}
Therefore $\nu_2(A_{4k, \leq 3})=k$.  The remaining cases are analyzed in a similar manner.
\end{proof}

\begin{theorem}
The 2-adic valuation of the associated factorial numbers $A_{n, \geq 3}$, for $n \geq 1$, is given by \begin{align*}
\nu_2(A_{n, \geq 3})=
\begin{cases}
k, & \text{if} \ n=4k;\\
\nu_2(k)+k+2, & \text{if} \ n=4k+1;\\
\nu_2(k)+k+4, & \text{if} \ n=4k+2;\\
k+1, & \text{if} \ n=4k+3.
\end{cases}
\end{align*}
\end{theorem}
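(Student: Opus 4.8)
The plan is to argue by strong induction on $n$, using the recurrence obtained from Theorem~\ref{genAigner} with $k=3$, namely
\[
A_{n,\geq 3}=(n-1)A_{n-1,\geq 3}+(n-1)(n-2)A_{n-3,\geq 3},\qquad n\geq 1,
\]
together with $A_{0,\geq 3}=1$ and $A_{1,\geq 3}=A_{2,\geq 3}=0$. First I would settle the base cases $n=3,4,5,6$ by direct computation: $A_{3,\geq 3}=2$, $A_{4,\geq 3}=6$, $A_{5,\geq 3}=24$, $A_{6,\geq 3}=160$, all in agreement with the stated formula (the values $n=1,2$ being trivial). Then, for the inductive step, fix $n\geq 7$, write $n=4k+r$, and split into four cases according to $r\in\{0,1,2,3\}$.

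For $r=0,1,3$ one applies the recurrence once. Combining the inductive hypothesis for $A_{n-1,\geq 3}$ and $A_{n-3,\geq 3}$ with the $2$-adic valuation of the explicit prefactors $n-1$ and $n-2$, a short computation shows that the two summands $(n-1)A_{n-1,\geq 3}$ and $(n-1)(n-2)A_{n-3,\geq 3}$ have \emph{distinct} $2$-adic valuations: for $r=0$ the first is smaller, with valuation $k$; for $r=1$ the first is smaller, with valuation $\nu_2(k)+k+2$; for $r=3$ the second is smaller, with valuation $k+1$. In each case $\nu_2(A_{n,\geq 3})$ equals the minimum of the two, which is exactly the claimed value. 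Throughout one uses that consecutive integers are never both even, so one of $\nu_2(k),\nu_2(k-1)$ vanishes.

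The delicate case is $r=2$, $n=4k+2$: here applying the recurrence once produces two summands of \emph{equal} valuation $\nu_2(k)+k+2$, so one only gets $\nu_2(A_{4k+2,\geq 3})\geq\nu_2(k)+k+2$, which is not enough. The remedy is to unwind two more steps of the recurrence (substituting the relations for $A_{4k+1,\geq 3}$ and then for $A_{4k,\geq 3}$), which yields
\[
A_{4k+2,\geq 3}=4k(4k+1)\Bigl[\,4k\,A_{4k-1,\geq 3}+(4k-1)(4k-2)A_{4k-3,\geq 3}+(4k-1)A_{4k-2,\geq 3}\,\Bigr].
\]
Now $A_{4k-1,\geq 3}=A_{4(k-1)+3,\geq 3}$, $A_{4k-3,\geq 3}=A_{4(k-1)+1,\geq 3}$ and $A_{4k-2,\geq 3}=A_{4(k-1)+2,\geq 3}$, so the inductive hypothesis gives the three bracketed terms valuations $\nu_2(k)+k+2$, $\nu_2(k-1)+k+2$ and $\nu_2(k-1)+k+3$ respectively. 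Since exactly one of $\nu_2(k),\nu_2(k-1)$ is $0$, exactly one of these three valuations is strictly minimal — the first when $k$ is odd, the second when $k$ is even — and in both cases that minimum equals $k+2$. Hence the bracket has valuation $k+2$ and
\[
\nu_2(A_{4k+2,\geq 3})=\nu_2\bigl(4k(4k+1)\bigr)+(k+2)=\bigl(\nu_2(k)+2\bigr)+(k+2)=\nu_2(k)+k+4,
\]
as claimed. I expect this $r=2$ step to be the main obstacle: the one-step recurrence genuinely fails to pin down the valuation, and one must see that unwinding two steps produces a three-term combination in which a single term is strictly $2$-adically dominant, with the identity of that term depending on the parity of $k$. (Note the two-step unwinding needs the inductive hypothesis only for indices $4k-3,4k-2,4k-1\geq 3$ with parameter $k-1\geq 1$, i.e. for $n=4k+2\geq 10$; the value $n=6$ is covered by the base cases.) The remaining cases and all the valuation bookkeeping are routine Legendre-type computations.
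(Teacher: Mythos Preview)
Your proposal is correct and follows the same inductive strategy as the paper: strong induction via the recurrence $A_{n,\geq 3}=(n-1)A_{n-1,\geq 3}+(n-1)(n-2)A_{n-3,\geq 3}$, with a case split on $n\bmod 4$. The paper only writes out the cases $n=4k$ and $n=4k+1$ and then declares that ``the remaining cases are analyzed in a similar manner.''

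In fact your treatment is more complete than the paper's. You correctly identify that the case $r=2$ is \emph{not} similar: a single application of the recurrence gives two summands of equal $2$-adic valuation $\nu_2(k)+k+2$, so the minimum-of-valuations argument fails. Your remedy---unwinding two more steps to obtain
\[
A_{4k+2,\geq 3}=4k(4k+1)\bigl[4k\,A_{4k-1,\geq 3}+(4k-1)(4k-2)A_{4k-3,\geq 3}+(4k-1)A_{4k-2,\geq 3}\bigr]
\]
and observing that exactly one of the three bracketed terms has strictly minimal valuation $k+2$ (which one depends on the parity of $k$)---is the natural fix and checks out. The base cases you list are correct, and your remark that the $r=2$ argument needs $k\geq 2$ (with $n=6$ absorbed into the base) is the right bookkeeping.
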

\begin{proof}
The proof is as in the previous theorem. If $n=4k$ then  Theorem \ref{genAigner} and the
 induction hypothesis give
 \begin{align*}
 A_{4k, \geq 3}&=(4k-1)A_{4k-1, \geq 3}+(4k-1)(4k-2)A_{4k-3, \geq 3}\\
 &=2^kO_1+(4k-1)(4k-2)2^{\nu_2(k-1)+k+1}O_2\\
 &=2^k(O_1+(4k-1)(2k-1)2^{\nu_2(k-1)+2}O_2)\\
 &=2^kO_3.
\end{align*}
Therefore $\nu_2(A_{4k, \geq 3})=k$.

\smallskip

If $n=4k+1$ then  Theorem \ref{genAigner} and the induction hypothesis now give
 \begin{align*}
 A_{4k+1, \geq 3}&=(4k)A_{4k, \geq 3}+(4k)(4k-1)A_{4k-2, \geq 3}\\
 &=2^{k+2}kO_1+(4k)(4k-1)2^{\nu_2(k-1)+k+3}\\
 &=2^{k+2}kO_1+k2^{\nu_2(k-1)+k+5}O_3)\\
 &=2^{k+2}k(O_1+2^{\nu_2(k-1)+3}O_3)\\
 &=2^{k+2}kO_4.
\end{align*}
Therefore $\nu_2(A_{4k+1, \geq 3})=\nu_2(k)+k+2$.   The remaining cases are analyzed in a similar manner.
\end{proof}

Divisibility properties of the sequences $B_{n, \leq 2}$ and $B_{n, \leq 3}$ by the prime $p=3$ turn out to be much simpler: $3$ does not divide any element of this sequence. The proof is based on the recurrences \eqref{recBn2}  and \eqref{rec-stir1}.

\medskip

\begin{theorem}
The sequence of residues $B_{n, \leq 2}$ modulo $3$ is a periodic sequence of period $3$, with fundamental period $\{ 1, \, 1, \, 2\}$.
\end{theorem}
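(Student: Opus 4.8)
The plan is to prove the claim by induction on $n$, working modulo $3$ throughout, using the recurrence $B_{n, \leq 2} = B_{n-1, \leq 2} + (n-1) B_{n-2, \leq 2}$ from \eqref{recBn2}. Since the coefficient $(n-1)$ depends on $n$ only through $n \bmod 3$, the recurrence modulo $3$ becomes a recurrence whose form is periodic with period $3$ in the index. This suggests that the residue sequence itself should be periodic with period dividing $3$ (or a small multiple of $3$), and a direct check of a few initial terms should pin down the fundamental period.

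First I would record the base data: from the initial conditions $B_{0, \leq 2} = B_{1, \leq 2} = 1$ and the recurrence, compute $B_{2, \leq 2} = B_{1, \leq 2} + 1 \cdot B_{0, \leq 2} = 2$, then $B_{3, \leq 2} = B_{2, \leq 2} + 2 B_{1, \leq 2} = 4 \equiv 1$, then $B_{4, \leq 2} = B_{3, \leq 2} + 3 B_{2, \leq 2} \equiv 1 + 0 = 1$, then $B_{5, \leq 2} = B_{4, \leq 2} + 4 B_{3, \leq 2} \equiv 1 + 1 \cdot 1 = 2 \pmod 3$. So the residues begin $1, 1, 2, 1, 1, 2, \dots$, matching the claimed fundamental period $\{1, 1, 2\}$.

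The inductive step is then the routine core. Assume $B_{n, \leq 2} \equiv B_{n-3, \leq 2}$ and $B_{n-1, \leq 2} \equiv B_{n-4, \leq 2} \pmod 3$ for the relevant range (equivalently, assume the periodicity holds up through index $n-1$). Using $n - 1 \equiv n - 4 \pmod 3$, the recurrence gives
\begin{equation*}
B_{n, \leq 2} = B_{n-1, \leq 2} + (n-1) B_{n-2, \leq 2} \equiv B_{n-4, \leq 2} + (n-4) B_{n-5, \leq 2} = B_{n-3, \leq 2} \pmod 3,
\end{equation*}
which closes the induction and establishes period $3$. The fact that the period is fundamental (not $1$) follows from the three initial residues $1, 1, 2$ not all being equal. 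As a byproduct one reads off that $3 \nmid B_{n, \leq 2}$ for all $n$, since no residue in $\{1, 1, 2\}$ is $0$.

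I do not anticipate a genuine obstacle here; the only mild care needed is in the bookkeeping of the induction — making sure the inductive hypothesis is stated over a window of two consecutive indices (so that both terms on the right-hand side of the recurrence are controlled) and that enough base cases are verified to start it. The key observation that makes everything work is simply that the coefficient $n-1$ in \eqref{recBn2} depends only on $n \bmod 3$, so reducing the recurrence mod $3$ produces an honestly periodic linear recurrence.
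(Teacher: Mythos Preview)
Your proof is correct and takes essentially the same approach as the paper: both use induction with the recurrence \eqref{recBn2} reduced modulo $3$. The paper organizes the induction by treating the three residue classes of $n$ modulo $3$ separately, while you prove $B_{n,\leq 2}\equiv B_{n-3,\leq 2}\pmod 3$ uniformly; this is a minor presentational difference, not a different method.
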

\begin{proof}
Assume $n \equiv 0 \bmod 3$ and write $n = 3k$. Then \eqref{recBn2} gives
\begin{eqnarray*}
B_{3k, \leq 2} & = &  B_{3k-1, \leq 2} + (3k-1)B_{3k-2, \leq 2} \\
         & \equiv & 2 - 1 = 1 \bmod 3,
   \end{eqnarray*}
   \noindent
and $B_{n, \leq 2} \equiv 1 \bmod 3$. The remaining two cases for $n$ modulo $3$ are treated in the same form.
\end{proof}

\begin{theorem}
The sequence of residues $B_{n, \leq 3}$ modulo $3$ is a periodic sequence of period $6$, with fundamental period
$\{ 1, \, 1, \, 2, \, 2, \, 2, \, 1 \}$.
\end{theorem}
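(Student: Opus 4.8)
The plan is to argue by strong induction on $n$, using the recurrence \eqref{rec-stir1} reduced modulo $3$. Writing $r_n := B_{n, \leq 3} \bmod 3$, the recurrence becomes
$$r_n \equiv r_{n-1} + (n-1)\, r_{n-2} + \binom{n-1}{2} r_{n-3} \pmod 3 \qquad (n \geq 3),$$
with $r_0 = r_1 = 1$ and $r_2 = 2$. So everything will follow once we understand how the two coefficients $(n-1) \bmod 3$ and $\binom{n-1}{2} \bmod 3$ behave as functions of $n$.

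The key preliminary observation is that both coefficients are periodic in $n$ with period $3$. This is clear for $(n-1) \bmod 3$. For $\binom{n-1}{2}$, Lucas' theorem (applied with $p = 3$, noting that $2$ is a single base-$3$ digit), or equivalently a one-line direct computation, gives $\binom{n-1}{2} \equiv 1 \pmod 3$ when $n \equiv 0 \pmod 3$ and $\binom{n-1}{2} \equiv 0 \pmod 3$ otherwise. In particular both coefficient sequences are periodic with period $6$.

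Now let $(c_0, c_1, c_2, c_3, c_4, c_5) = (1, 1, 2, 2, 2, 1)$ and extend $c$ to all of $\mathbb{N}$ by $c_{n+6} = c_n$. I claim $r_n = c_n$ for every $n \geq 0$, which is exactly the assertion of the theorem. The base cases $n = 0, 1, 2$ hold by the initial conditions of \eqref{rec-stir1}. For the inductive step, fix $n \geq 3$ and assume $r_{n-1} = c_{n-1}$, $r_{n-2} = c_{n-2}$, $r_{n-3} = c_{n-3}$; substituting into the reduced recurrence gives
$$r_n \equiv c_{n-1} + (n-1)\, c_{n-2} + \binom{n-1}{2} c_{n-3} \pmod 3.$$
By the periodicity of $c$ and of the two coefficients, the right-hand side depends only on $n \bmod 6$, so it suffices to verify that it equals $c_n$ for one representative of each residue class modulo $6$, say $n \in \{3, 4, 5, 6, 7, 8\}$. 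Each of these six verifications is an immediate arithmetic check, and together they complete the induction.

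The honest statement about difficulty: there is no real obstacle here. The only point that deserves a sentence of care is the claim that $\binom{n-1}{2} \bmod 3$ has period $3$ (and not, say, period $9$), which is what makes the whole argument collapse to a finite check; once that is recorded via Lucas' theorem, the proof is routine.
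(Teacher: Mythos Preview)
Your proof is correct and follows essentially the same approach as the paper's: both argue by induction on $n$ via the recurrence \eqref{rec-stir1} reduced modulo $3$, splitting into the six residue classes of $n$ modulo $6$. The paper handles the coefficient $\binom{n-1}{2}$ by directly writing $\binom{6k-1}{2} = (3k-1)(6k-1)$ rather than invoking Lucas' theorem, but this is only a cosmetic difference; your organization (recognizing the period-$3$ coefficient pattern up front and then reducing to six arithmetic checks) is slightly more explicit about why period $6$ is the right modulus, but the substance is identical.
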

\begin{proof}
Assume $n \equiv 0 \bmod 6$ and write $n = 6k$. Then \eqref{rec-stir1} gives
\begin{eqnarray*}
B_{6k, \leq 3} & = &  B_{6k-1, \leq 3} + (6k-1) B_{6k-2, \leq 3} + (3k-1)(6k-1)B_{6k-3, \leq 3}  \\
         & \equiv & 1 - 2 + 2 = 1 \bmod 3,
   \end{eqnarray*}
   \noindent
showing that  $B_{n, \leq 3} \equiv 1 \bmod 3$. The remaining five cases for $n$ modulo $6$ are treated in the same form.
\end{proof}

\begin{corollary}
The restricted Bell numbers $B_{n,\leq 2}$ and $B_{n,\leq 3}$ are not divisible by $3$.
\end{corollary}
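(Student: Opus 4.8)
The plan is to read this off directly from the two theorems immediately preceding it, which describe the sequences of residues modulo $3$ of $B_{n,\leq 2}$ and $B_{n,\leq 3}$. First I would recall that $3 \mid B_{n,\leq m}$ is equivalent to $B_{n,\leq m} \equiv 0 \bmod 3$. Then I would invoke the periodicity results: the residues of $B_{n,\leq 2}$ modulo $3$ cycle through the fundamental period $\{1,1,2\}$, and those of $B_{n,\leq 3}$ modulo $3$ cycle through $\{1,1,2,2,2,1\}$.

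The key observation — and essentially the only step — is that $0$ does not appear among the entries of either fundamental period; every residue value is either $1$ or $2$. Hence for every $n \geq 0$ we have $B_{n,\leq 2} \not\equiv 0 \bmod 3$ and $B_{n,\leq 3} \not\equiv 0 \bmod 3$, which is exactly the claim. There is no genuine obstacle here: the corollary is an immediate consequence of the preceding two theorems, and the only thing to verify is the trivial arithmetic fact that neither listed period contains a zero. If one preferred a self-contained argument one could instead reprove the periodicity by reducing the recurrences \eqref{recBn2} and \eqref{rec-stir1} modulo $3$ and checking finitely many initial values, but citing the two theorems already established is cleaner and the natural route.
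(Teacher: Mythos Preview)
Your proposal is correct and matches the paper's treatment: the corollary is stated without proof immediately after the two periodicity theorems, so the intended argument is precisely the observation you make, that neither fundamental period $\{1,1,2\}$ nor $\{1,1,2,2,2,1\}$ contains a $0$.
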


\medskip

Using this type of analysis it is possible to prove the following results:

\begin{itemize}
\item The $5$-adic valuation of the sequence $B_{n, \leq 3}$ is given by
\begin{equation*}
\nu_{5}(B_{n, \leq 3}) = \begin{cases}
1, & \quad \text{ if } n \equiv 3 \bmod 5; \\
0, & \quad \text{ if } n \not \equiv 3 \bmod 5.
\end{cases}
\end{equation*}
\item
The $7$-adic valuation of the sequence $B_{n, \leq 3}$ satisfies $\nu_{7}(B_{n, \leq 3}) = 0$ if $n \not \equiv 4 \bmod 7$.
\item
The sequence of residues $B_{n, \leq 5}$ modulo $7$ is a periodic sequence of period $7$, with fundamental period $\{1, \ 1,\  2,\  5,\  1,\  3,\  6 \}$.
\item
The 3-adic valuation of the associated factorial numbers $A_{n, \geq 3}$ satisfy $\nu_3(A_{n, \geq 3})=0$ if  $n \equiv 0 \bmod 3$. For $n=3k+1$,  the valuation is given by  $\nu_3(A_{n, \geq 3})=\nu_3(A_{n+1, \geq 3})=\nu_3(n-1)$. This covers all cases.

\item
The sequence of residues $A_{n, \leq 5}$ modulo $7$ is a periodic sequence of period $7$, with fundamental period $\{1, \ 1, \ 2,  \ 6, \ 3, \ 1, \ 5\}$.
\end{itemize}

Many other results of this type can be discovered experimentally.  A discussion of a general theory is in preparation.

\bigskip

\noindent
{\bf Acknowledgements}. The first author acknowledges the
partial support of NSF-DMS 1112656. The last author is a graduate student at Tulane University.  The first author
thanks an invitation from the Department of Mathematics of Universidad Sergio Arboleda, Bogot\'{a}, Colombia, where the
presented work was initiated.

\bigskip

\end{document}